\newtheorem{theorem}{Theorem}
\newtheorem{definition}[theorem]{Definition}
\newtheorem{corollary}[theorem]{Corollary}
\newtheorem{lemma}[theorem]{Lemma}
\newtheorem{notation}[theorem]{Notation}
\newtheorem{remark}[theorem]{Remark}
\newtheorem{assumption}[theorem]{Assumption}
\DeclareMathOperator{\projvariable}{i}
\DeclareMathOperator{\Xgroundset}{\Upsilon}
\DeclareMathOperator{\RR}{{\mathbb{R}}}
\DeclareMathOperator{\Proj}{\mathcal{P}}
\DeclareMathOperator{\Projind}{\mathfrak{p}}
\DeclareMathOperator{\Projbasic}{P}
\DeclareMathOperator{\sign}{sign}
\newcommand{\dd}[1]{\,\mathrm{d}{#1}}
\newcommand{\supp}[1]{\mathrm{supp}\left(#1\right)}
\newcommand{\rg}[1]{\mathrm{Rg}({#1})}
\newcommand{\myresolution}{1000}
\DeclareMathOperator{\ZZ}{\mathbb{Z}}
\DeclareMathOperator{\NN}{\mathbb{N}}
\DeclareMathOperator{\Radon}{\mathcal{R}}
\DeclareMathOperator{\Fanbeam}{\mathcal{F}}
\DeclareMathOperator{\ExpoFanbeam}{\mathcal{E}}
\DeclareMathOperator{\rotated}{\,\circlearrowleft}
\DeclareMathOperator{\tvariable}{t}
\DeclareMathOperator{\tgroundset}{T}
\DeclareMathOperator{\rayvariable}{r}
\DeclareMathOperator{\raygroundset}{R}
\DeclareMathOperator{\tinverse}{\tau}
\DeclareMathOperator{\rayinverse}{\rho}
\DeclareMathOperator{\stripfkt}{h}
\DeclareMathOperator{\Vone}{V_1}
\DeclareMathOperator{\Vtwo}{V_2}
\DeclareMathOperator{\Vvariable}{{V_{\projvariable}}}
\DeclareMathOperator{\attenuation}{\omega}
\newcommand{\conditionname}{$\mathrm{PPRC}$}
\DeclareMathOperator{\weightfkt}{\zeta}
\DeclareMathOperator{\altweightfkt}{\tilde {\zeta}}
\DeclareMathOperator{\rhoone}{\weightfkt_{1}}
\DeclareMathOperator{\rhotwo}{\weightfkt_{2}}
\DeclareMathOperator{\rhoi}{\weightfkt_{\projvariable}}
\DeclareMathOperator{\altrhoone}{\altweightfkt_{1}}
\DeclareMathOperator{\altrhotwo}{\altweightfkt_{2}}
\DeclareMathOperator{\altrhoi}{\altweightfkt_{\projvariable}}
\DeclareMathOperator{\imgdom}{{\Omega}}
\DeclareMathOperator{\essequ}{\doteq}
\DeclareMathOperator{\gammagroundset}{{\mathcal M}}
\begin{document}

\newcommand\blfootnote[1]{%
\begingroup
\renewcommand\thefootnote{}\footnote{#1}%
\addtocounter{footnote}{-1}%
\endgroup
}

\title{Determination of Range Conditions for General Projection Pair Operators}
\author[1]{Richard Huber}    \author[2]{Rolf Clackdoyle} \author[2]{Laurent Desbat}
\affil[1]{IDea Lab, University of Graz, Austria}

\affil[2]{Univ. Grenoble Alpes, CNRS, Grenoble INP, TIMC }
\date{}
\maketitle

\addcontentsline{toc}{section}{Abstract}
\begin{abstract}
\blfootnote{richard.huber@uni-graz.at, laurent.desbat@univ-grenoble-alpes.fr and rolf.clackdoyle@univ-grenoble-alpes.fr}
Tomographic techniques are vital in modern medicine, allowing doctors to observe patients’ interior features. Individual steps in the measurement process are modeled by `single projection operators' $\Projind$. These are line integral operators over a collection of curves that covers the regions of interest. 
Then, the entire measurement process can be understood as a finite collection of such single projections, and thus be modeled by an $N$-projections operator $\Projbasic=(\Projind_1,\dots,\Projind_N)$.  The most well-known example of an $N$-projections operator is the restriction of the Radon transform to finitely many projection angles. 
Characterizations of the range of $N$-projections operators are of intrinsic mathematical interest and can also help in practical applications such as geometric calibration, motion detection, or model parameter identification. 
In this work, we investigate the range of projection pair operators $\Proj$ in the plane, i.e., operators formed by two projections ($N=2$) applied to functions in $\RR^2$.
We find that the  set of annihilators to $\rg{\Proj}$ that are regular distributions contains at most one dimension
and a range condition can be explicitly determined by what we refer to as  `kernel conditions'.
With this tool, we examine the exponential fanbeam transform for which no range conditions were known, finding that no (regular) range condition exists, and therefore, arbitrary data can be approximated in an $L^2$ sense by projections of smooth functions.
We also illustrate the use of this theory on a mixed parallel-fanbeam projection pair operator. 
\end{abstract}

%
\vspace{2pc}
\noindent{\it Keywords}: Computed Tomography, Integral Operators, Range Characterizations,  Data Consistency Conditions, Single Photon Emission Tomography
\newpage
%
%
%
%

\section{Introduction}
\label{section_introduction}
Projection-based imaging modalities such as Computed Tomography (CT), Positron Emission Tomography (PET) and Single Photon Emission Computed Tomography (SPECT) \cite{ Deans_Radon_applications_1993,BLOKLAND200270,
Emissiontomo_2005, Ammari08_book_methematics_of_medical_imaging, Hsieh_CT_principles} have become indispensable tools in medical investigations.
In computed tomography, a sequence of X-ray images from various positions around the patient is measured with the goal of identifying the overall density distribution inside the body. Roughly speaking, seeing an object only from one direction is insufficient  to identify it fully, but rotating around it and seeing it from all directions might. Doing so requires knowledge of the interaction between the X-rays and the object's density $f$ for each X-ray image. To mathematically model these physical processes, a single projection operator $\Projind$ assigns the integration values of $f$ along straight lines (pointing in the projection direction) to those lines, thereby modeling the attenuation the radiation undergoes while transversing matter for a single projection. Thus, a complete measurement is modeled by an $N$-projections operator $\Projbasic=(\Projind_1,\dots, \Projind_N)$ for $N$ `independent' projections.
Tomographic reconstruction corresponds to the solution of the operator equation $\Projbasic f =g$, where $f$ denotes the density distribution to be determined, while $g$ corresponds to the measured data. 
Various other types of tomography can also be modeled using line integral operators, e.g., the Radon transform, the fanbeam (also divergent beam) transform, or exponential Radon transform; see  \cite[Chapter II]{Natterer:2001:MCT:500773} for an introduction to all three, as well as  \cite{Deans_Radon_applications_1993,V_Aguilar_1995}. 
While many tomographic techniques in three dimensions exist,
the investigations in this paper are restricted to the two-dimensional case: tomography of compactly supported functions in the plane, i.e., $f\colon \Omega\to \RR$ is compactly supported in a bounded set $\Omega\subset \RR^2$.


Knowledge of the range of these projection operators $\Projbasic$  has proven itself useful for various practical applications. (In the applied literature, range conditions are often referred to as `data consistency conditions'.) Notable examples include: 
detection of the motion of small objects \cite{clackdoyle:hal-01967298}; reduction of motion artifacts in CT \cite{1637536,mouchet:hal-03511400};
CT calibration to avoid beam hardening \cite{8364614}; misalignment correction in CT \cite{lesaint:hal-01686646,desbat:hal-03099405, Luo_data_sustained_misalignment,
nguyen:hal-03101279};
estimation of missing projections \cite{Michel_Defrise_2003, new_data_consistency_fanbeam, Ma2017JohnsEC};
identification of attenuation maps for PET \cite{https://doi.org/10.1002/mma.1670150504,737676,6857b0c455474528a1ce35ccfeea3d3b,1208606}; cross-plane PET into lines of response on contiguous 2D slices conversion \cite{M_Defrise_1995,Michel_Defrise_1999}; identification of artifacts in attenuation images for PET \cite{1596627}; estimation of attenuation maps in time of flight \cite{Defrise2012TimeofflightPD};
identification of attenuation factors or attenuation regions in SPECT \cite{1573668924263339008,F_Natterer_1993, Welch_towards_accurate_attenuation_correction_in_spect, C_Mennessier_1999};
correction of depth-dependent collimator effects \cite{301464,293929};
correction of misalignment between SPECT and  CT data \cite{Wells104,https://doi.org/10.1002/mp.15058_2021}.

It is more common to analyze projection operators as having infinitely many projections -- and the associated range conditions might involve all of them -- but real-life measurements only capture finitely many projections. This creates a disconnect between range conditions meant for infinitely many projections and data obtained through finitely many projections. 
Clackdoyle proposed the notion of `projection form range conditions' in \cite{6495510}, meaning any condition only involving finitely many projections, and pointed out their relevance when working with a limited number of projections. 

Thus, beyond its intrinsic mathematical interest, mathematically describing the range $\rg{\Projbasic}$ for $N$-projections operators $\Projbasic$ can help in practical applications. To that end, conditions are needed to verify whether the measured data $g$ satisfy $g\in \rg{\Projbasic}$, i.e., $g=\Projbasic f$ for some suitable $f$. Such conditions are required to be easily verifiable and only rely on knowledge of $g$ and not on $f$ (because they are typically applied prior to reconstruction). 
Since verifying orthogonality between two vectors is simple, the orthogonal complement of the range induces a natural collection of range conditions, which are convenient for testing the consistency of given data $g$.

A classic example of such conditions is the Gelfand-Graev-Helgason-Ludwig moment condition of order zero, stating that in the Radon transform (describing parallel CT), any two projections must contain the same total mass. These conditions were developed by Gelfand and Graev  \cite{WOS:A1960WE64800004} as well as Helgason and Ludwig \cite{10.1007/BF02391776,Ludwig_66}. 
(In the literature, the conditions are commonly referred to as Helgason-Ludwig conditions.)

Similar range conditions are known for other prevalent projection operators, such as the exponential Radon transform and the fanbeam transform \cite{doi:10.1080/01630568308816147,V_Aguilar_1995,new_data_consistency_fanbeam, Levine2010ConsistencyCF,Ludwig_66,helgason2013radon,6495510}.
In contrast, for two projections of the exponential fanbeam transform,  the authors announced in \cite{Huber_fully_3d_2023} that no range conditions exist. Other projection operators have yet to be investigated in that regard.
However, knowledge of range conditions for one projection operator does not inform about range conditions for other projection operators (modeling other types of tomography), so they have to be derived individually.
Annihilators of the range (naively speaking, vectors orthogonal to the range) $V$ can be seen as inducing range conditions by checking $\langle V,g\rangle =0$. Hence, it is natural to ask how big the set of annihilators is, and how to determine annihilators.

Answering these questions for general $N$-projections operators is very challenging. Thus, this paper investigates them for a restricted notion of `Projection Pair Operators' consisting of only two projections; i.e., $N=2$. Moreover, we restrict our investigations to annihilators that are regular distributions, i.e., can be represented as integration against $L^1_\text{loc}$ functions.
Such range conditions for projection pair operators are here referred to as `Projection Pair Range Conditions' (\conditionname s).

Given an $N$-projections operator $\Projbasic=(\Projind_1,\dots,\Projind_N)$, one can check the \conditionname s  of all pairs of projections $(\Projind_k,\Projind_l)$ with $k\neq l$ to find inconsistencies; a versatile approach sometimes used in practice.  In this sense, \conditionname s  induce necessary range conditions of $N$-projections operators with more than two projections, although other range conditions might also exist for the $N$-projections operator. 
Thus, the usefulness of \conditionname s transcends tomography with only two projections.


The paper is organized as follows.
In Section \ref{section_Abstract}, we investigate general
 \conditionname s from a theoretical perspective. In the main theoretical contribution in Theorem \ref{thm_kernel_condition}, we find necessary and sufficient conditions on the form annihilators inducing the range conditions may take, resulting in what we call kernel conditions. Then, Section \ref{section_recover_known_conditions} shows how these results apply to parallel and fanbeam geometries and illustrate the use of kernel conditions for a mixed parallel-fanbeam projection pair. In Section \ref{section_no_condition_for_exponential_fanbeam}, we apply this framework to the exponential fanbeam transform. In Section \ref{Section_exponential_fanbeam_theory}, we show that {\em no \conditionname{} exists for the exponential fanbeam transform}, and any two $L^2$ functions $(g_1,g_2)$ can be approximated arbitrarily well by the exponential fanbeam transforms of smooth functions. In Section \ref{Section_expo_fanbeam_numerics}, we illustrate this fact using numerical simulations.

\section{Theoretical Investigations of General Projection Pair Operators}
\label{section_Abstract}

In this section, we investigate `projection pair range conditions' (\conditionname s) and their analytical properties. In the preliminary Section \ref{section_projection_pair_preliminaries}, we describe the mathematical setup, introduce single projection operators, and discuss some of their basic properties. In Section \ref{section_projection_pair_identification_kernels}, we move on to pairs of single projection operators and investigate conditions describing an overlap of information between them. To that end, we introduce the concept of projection kernels which induce \conditionname s, and investigate their properties. In particular, Theorem \ref{thm_kernel_condition} characterizes projection kernels via kernel conditions, which provide a general method to determine \conditionname s.

\subsection{Preliminaries and single projection operators}
\label{section_projection_pair_preliminaries}
The general setup can be described as follows. We investigate a function $f$ (e.g., describing the density of an object) in a planar domain $\imgdom$. The concept of a single projection is a collection of measurements of certain integrals over curves (typically straight lines). These curves cover the entire $\imgdom$ and each curve does not self-intersect and does not intersect with any other curve in that projection; i.e., $\imgdom$ is covered bijectively by this set of curves.

\begin{notation}\label{Notation_basic_introduction}
For the purposes of this paper, we use the following notation.
\begin{itemize}
\item[1.a)]  The sets $\imgdom,\gammagroundset\subset \RR^2$ are open, bounded, and connected and will reflect the imaging domain, and a domain of parametrized curves through the imaging domain, respectively.

\item[1.b)] Given such a set $\gammagroundset$, the set $\raygroundset\subset \RR$ denotes the set containing all $\rayvariable\in \RR$ such that there is a $\tvariable\in \RR$ with $(\rayvariable,\tvariable)\in \gammagroundset$.
For fixed $\rayvariable\in \raygroundset$, we set $\tgroundset(\rayvariable):=\{\tvariable \in \RR \ \big | \ (\rayvariable,\tvariable) \in \gammagroundset\}$.  
From the properties of $\gammagroundset$, we note that $\raygroundset$ is an open and bounded interval, while $\tgroundset(\rayvariable)$ is an open bounded non-empty set for each $\rayvariable \in \raygroundset$.
With this notation, we write $\gammagroundset$ as 
$\gammagroundset=\{(\rayvariable,\tvariable)\in \RR^2 \ \big|\ \rayvariable\in \raygroundset \text { and } \tvariable \in \tgroundset(\rayvariable)\}$; see Figure \ref{fig_general_illustration_gamma}.

\item[1.c)] The map $\gamma \colon \gammagroundset \to \imgdom$ describes a $\mathcal{C}^1$-diffeomorphism (i.e., bijective, continuously differentiable map, with continuously differentiable inverse; note that continuously differentiable is equivalent to the function and its partial derivatives being continuous). Moreover, we assume that $\det\left( \frac{\dd \gamma}{\dd{(\rayvariable,\tvariable)}}\right)$ and $\det\left( \frac{\dd \gamma^{-1}}{\dd{x}}\right)$ are bounded on $\gammagroundset$ and $\imgdom$, respectively.
We use the notation $\gamma^{-1}=(\rayinverse,\tinverse)$, i.e., $(\rayinverse(x),\tinverse(x))=\gamma^{-1}(x)$,   which given $x\in \imgdom$ determines the corresponding parametrization $\rayvariable=\rayinverse(x)$ and $\tvariable=\tinverse(x)$ such that $x=\gamma(\rayvariable,\tvariable)$.

\item[2.a)] As usual, $\mathcal{C}^\infty_c(\imgdom)$ refers to infinitely differentiable functions with compact support contained in $\imgdom$, and $L^2(\imgdom)$ denotes the set of all measurable functions $f\colon \imgdom \to \RR$  that satisfy $\|f\|_{L^2}< \infty$. (Recall, $\|f\|_{L^2}^2:={\int_{\imgdom}|f(x)|^2\dd x}$, and strictly speaking, $L^2(\imgdom)$ contains equivalence classes of functions only differing on sets of measure zero; we also say they are equal almost everywhere (abbreviated to `a.e.'), or they differ only on sets of measure zero.)

\item[2.b)] 
The space $L^\infty_c(\raygroundset)$ consists of measurable functions that are essentially bounded (i.e., except on sets of measure zero) and whose support is compactly contained in $\raygroundset$.
Moreover,  $L^1_\text{loc}(\raygroundset)$ denotes the space of measurable functions that are (absolutely) integrable on any compact subset of $\raygroundset$; see, e.g., \cite{evans2010partial}. Note that $L^1_\text{loc}$ is (roughly speaking) the largest space of measurable functions with some integrability requirement, as it has no assumption of compact support and contains all $L^p$ spaces for $p\in [1,\infty]$.


\end{itemize}

\end{notation}

\begin{definition}
\label{def_projection_curves}
Given $\imgdom$, $\gammagroundset$ and $\gamma$ as in Notation \ref{Notation_basic_introduction}, and a bounded measurable function $\weightfkt\colon \gammagroundset \to [c,C]$ with $0<c<C$, we define  the corresponding single projection operator $\Projind$ such that given a function $f\in \mathcal{C}^\infty_c(\imgdom)$ and $\rayvariable \in \raygroundset$, we have
\begin{equation}\label{equ_def_single_proj_operator}
[\Projind f](\rayvariable) := \int_{\tgroundset (\rayvariable)} f(\gamma(\rayvariable,\tvariable)) \weightfkt(\rayvariable,\tvariable) \dd \tvariable.
\end{equation}

\end{definition}

\begin{figure}
\center 

\begin{tikzpicture}
\newcommand{\globalshift}{1.5}

\newcommand{\eps}{0.7}
\newcommand{\mypi}{3.14}
\newcommand{\A}[2]{(\eps *(cos(deg(\mypi*#1))+1)*(cos(deg(\mypi*#2))+1)/4 + \eps * (cos(deg(\mypi/2*#1))* (cos(deg(\mypi/2*#2)))}

\draw[](0,3) node{\large $\imgdom$};
\draw[](\globalshift*4,3) node{\large $\gammagroundset$};

\fill [gray,rotate=45] (-1.41,-1.41) rectangle (1.41,1.41);

\draw[fill overzoom image=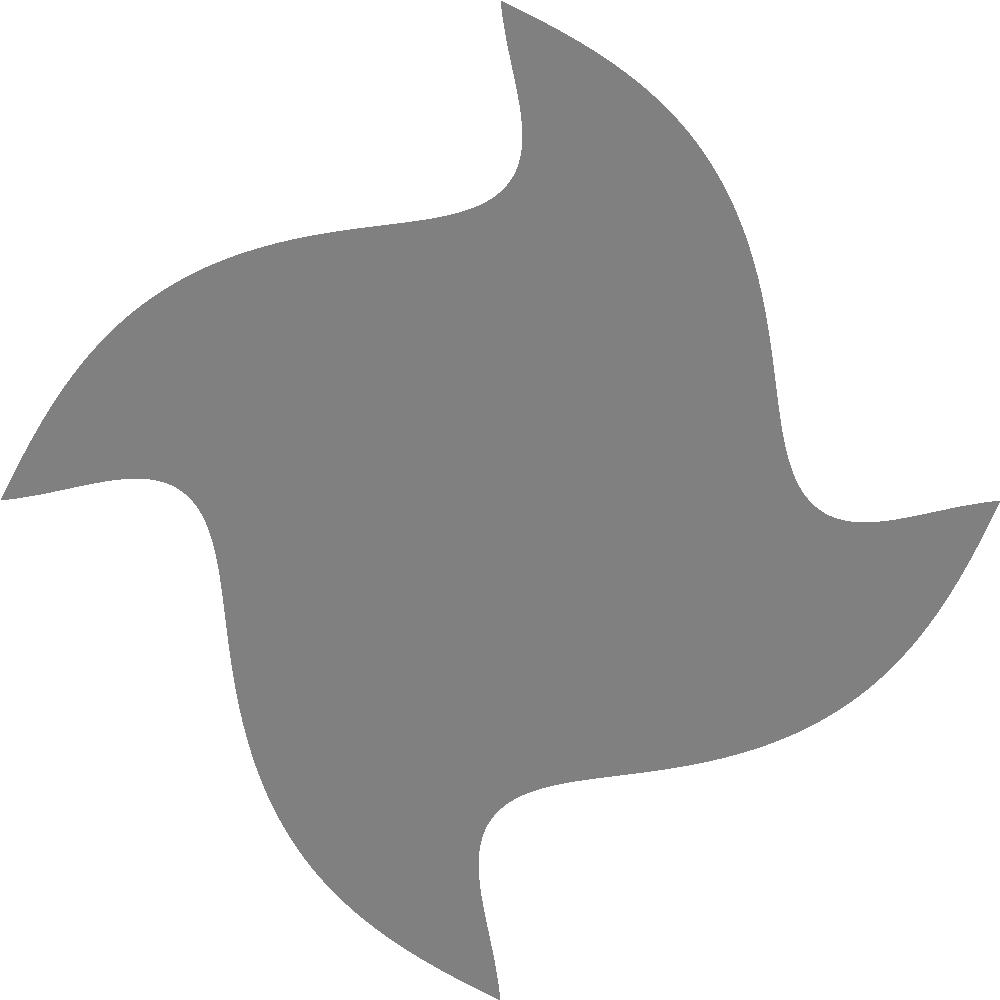] (4*\globalshift-2,-2)rectangle (4*\globalshift+2,2);

\foreach \x/\mycolor in {-1/red,-0.6/orange,-0.2/teal,0.2/cyan,0.6/blue,1/violet}
{
\draw[scale=2, domain=-1:1, ultra thick, smooth, variable=\y, \mycolor] plot ({\x-\y*\A{\x}{\y}}, {\y+\x*\A{\x}{\y}});

\draw[scale=2, domain=-1:1, ultra thick, smooth, variable=\y, \mycolor,xshift=2*\globalshift cm] plot ({\x}, {\y});
}

\newcommand{\centerx}{\globalshift*2+0.6}
\draw [->] (\centerx,-2-0.5) -- (\centerx,2) node [above]{$\tvariable$};

\draw [->] (\centerx,-2-0.5) -- (\globalshift*2-0.5+6,-2-0.5) node [right]{$\rayvariable$};

\draw[thick,orange,decorate,decoration={brace,amplitude=4pt}] (\centerx,487/1000*4-2)-- (\centerx,732/1000*4-2) node[left,midway,yshift=0.00cm,xshift=0.05cm]{$\tgroundset(\rayvariable)$};

\draw[orange,dashed](\centerx,487/1000*4-2) -- (\globalshift*4-0.6*2,487/1000*4-2);

\draw[orange,dashed](\centerx,732/1000*4-2) -- (\globalshift*4-0.6*2,732/1000*4-2);

\draw[thick,magenta,decorate,decoration={brace, amplitude=4pt}] (\globalshift*4+1*2,-2-0.5) -- (\globalshift*4-1*2,-2-0.5) node[below,midway,yshift=0.00cm,xshift=0.05cm]{$\raygroundset$};

\draw[magenta,dashed] (4*\globalshift-2,-2.5) -- (4*\globalshift-2,-2);
\draw[magenta,dashed] (4*\globalshift+2,-2.5) -- (4*\globalshift+2,-2);

\foreach \y/\mycolor in {-1/red,-0.6/orange,-0.2/green,0.2/cyan,0.6/blue,1/violet}
{
\draw[scale=2, domain=-1:1, ultra thick,dashed, smooth, variable=\x] plot ({\x-\y*\A{\x}{\y}}, {\y+\x*\A{\x}{\y}});

\draw[scale=2, domain=-1:1, ultra thick,dashed, smooth, variable=\x,xshift=2*\globalshift cm] plot ({\x}, {\y});
}

\begin{scope}
\draw (3,3.5) node{\large $\gamma$};

\clip (1.8,3) rectangle (5.8,4);
\draw[->,bend right=30, ultra thick](4,3) edge (2,3);
\end{scope}

\end{tikzpicture}

\begin{tikzpicture}[scale=1.2]
\newcommand{\globalshift}{3}

\draw [fill, blue!10] (-1, -1) rectangle (\globalshift+1.4,1.7);

\draw[fill,gray!50] (0,0) circle (1 cm);
\draw (0.8,1.4) node[below,black]{$\imgdom$};
\draw[thick,brown,rotate=45,->](0,0) -- (1.1,0) node[right ]{$\vartheta$};
\draw (0.8,1.4) node[below,black]{$\imgdom$};
\draw[thick,brown,rotate=135,->](0,0) -- (1.1,0) node[above ]{$\vartheta^{\rotated}$};

\begin{scope}
\draw[ ->](0,0) -- (0,1.1) node[above]{$x_2$};
\draw[ ->](0,0) -- (1.1,0) node[right]{$x_1$};
\draw (1.5,1.5) node{\large $\gamma$};

\draw[->,bend right=30, ultra thick](2,1.2) edge (1,1.2);
\end{scope}

\draw[fill,gray!50] (\globalshift,0) circle (1 cm);
\draw (\globalshift-0.7,1.4) node[below,black]{$\gammagroundset$};
\draw[ ->](\globalshift,0) -- (\globalshift,1.1) node[above]{$\tvariable$};
\draw[ ->](\globalshift,0) -- (\globalshift+1.1,0) node[right]{$\rayvariable$};

\foreach \s/\mycolor in {-1/red,-0.6/orange,-0.2/teal,0.2/cyan,0.6/blue,1/violet}
{
\pgfmathsetmacro{\pos}{\s};
\begin{scope}
\clip(0,0) circle (1cm);
\draw[black, rotate=45,dashed](-1,\pos) -- (1,\pos);
\end{scope}
\begin{scope}
\clip(\globalshift,0) circle (1cm);
\draw[black,dashed](\globalshift-1,\pos) -- (\globalshift+1,\pos);
\end{scope}

\begin{scope}
\clip(0,0) circle (1cm);
\draw[\mycolor, thick,rotate=-45](-1,\pos) -- (1,\pos);
\end{scope}
\begin{scope}
\clip(\globalshift,0) circle (1cm);
\draw[\mycolor, thick](\globalshift+\pos,-1) -- (\globalshift+\pos,+1);
\end{scope}
}
\end{tikzpicture} 
\begin{tikzpicture}[scale=1.2]
\newcommand{\globalshift}{3}
\newcommand{\innerradius}{0.2}
\newcommand{\outerradius}{0.9}
\draw [fill, green!10] (-1, -1) rectangle (\globalshift+1.4,1.7);
\begin{scope}
\draw (0.8,1.4) node[below,black]{$\imgdom$};
\clip (0,-1) rectangle (1,1);

\draw[fill,gray!50] (0,0) circle (1 cm);
\draw (0.7,0) ;

\draw[fill,white] (0,0) circle (\innerradius cm);

\end{scope}

\begin{scope}
\draw[ ->](0,0) -- (0,1.1) node[above]{$x_2$};
\draw[ ->](0,0) -- (1.1,0) node[right]{$x_1$};
\draw (1.5,1.5) node{\large $\gamma$};

\draw[->,bend right=30, ultra thick](2,1.2) edge (1,1.2);
\end{scope}

\draw[fill,gray!50] (\globalshift-1,-0.8+\innerradius) rectangle (\globalshift+1,0.9);
\draw (\globalshift-0.3,1.4) node[left, black,yshift=-0.2cm]{$\gammagroundset$};
\draw[ ->](\globalshift,-0.8) -- (\globalshift,1.1) node[above]{$\tvariable$};
\draw[ ->](\globalshift,-0.8) -- (\globalshift+1.1,-0.8) node[right]{$\rayvariable$};

\foreach \s/\mycolor in {-1/red,-0.6/orange,-0.2/teal,0.2/cyan,0.6/blue,1/violet}
{
\pgfmathsetmacro{\pos}{\s};
\begin{scope}
\clip(0,0) circle (1cm);
\clip (0,-1) rectangle (1,1);
\draw[black,dashed](0,0) circle (\s cm);
\end{scope}

\begin{scope}
\pgfmathsetmacro{\valone}{cos( deg(\s*1.57))};
\pgfmathsetmacro{\valtwo}{sin( deg(\s*1.57))};

\clip(0,0) circle (1cm);
\clip (0,-1) rectangle (1,1);
\draw[\mycolor, thick,rotate=-0](0,0) -- (\valone,\valtwo);
\end{scope}

\begin{scope}
\clip(\globalshift-1,-0.8+\innerradius) rectangle (\globalshift+1,0.9);
\draw[\mycolor, thick](\globalshift+\pos,-1) -- (\globalshift+\pos,+1);
\end{scope}
}

\foreach \s/\mycolor in {-0.6/cyan,0.1/blue,0.9/violet}
{
\pgfmathsetmacro{\pos}{\s};
\begin{scope}
\clip(\globalshift-1,-0.8+\innerradius) rectangle (\globalshift+1,0.9);
\draw[black,dashed](\globalshift-1,\pos) -- (\globalshift+1,\pos);
\end{scope}
}

\draw[fill,green!10](0,0) circle (\innerradius cm);
\draw[ ->](0,0) -- (0,1.1) node[above]{$x_2$};
\draw[ ->](0,0) -- (1.1,0) node[right]{$x_1$};
\end{tikzpicture}

\caption{On the top, an illustration of a generic bijection $\gamma$ between the domain $\imgdom$ in gray on the left, and $\gammagroundset$ on the right.  The colored straight lines on the right (associated with fixed values for $\rayvariable$) are transformed by $\gamma$ into curves covering $\imgdom$. The dashed black lines illustrate $\gamma$ for fixed $\tvariable$. In particular, $\tgroundset(\rayvariable)$ is the intersection of the colored line representing a fixed $\rayvariable$ with $\gammagroundset$, and is depicted for the orange $\rayvariable$.
In the bottom row, analogous illustrations for the parallel beam geometry setting (on the left), and for a fanbeam geometry setting (on the right) are shown.
}
\label{fig_general_illustration_gamma}
\end{figure}

So $\Projind f$ is a single projection, and $\Projind$ can be interpreted as a weighted curve integral operator; for known weight bounds $[c,C]$.
The curves $\gamma(\rayvariable,\cdot)$ (for $\rayvariable \in \raygroundset$) parametrize the integration lines typical for projection methods, e.g., the paths the radiation follows. Here, $\rayvariable \in \raygroundset$ specify which curve (ray) is used, while $\tvariable$ parametrizes the curve itself; see Figure \ref{fig_general_illustration_gamma}.
These curves are straight lines in most practical cases, but other choices are mathematically possible.

In the classical case of parallel beam measurements, $\gamma(\rayvariable,\tvariable)=\rayvariable\vartheta+\tvariable\vartheta^{\rotated}$ with $\vartheta\in \RR^2$ a unit vector and $\vartheta^{\rotated}$ the $90^\circ$ counterclockwise rotated $\vartheta$.
For instance, when $\imgdom$ is the unit ball in $\RR^2$, then $\raygroundset$ is the interval $(-1,1)$, while $\tgroundset(\rayvariable)= (-\sqrt{1-\rayvariable^2},\sqrt{1-\rayvariable^2})$ and $\gammagroundset=\{ (\rayvariable,\tvariable)\ \big | \ \rayvariable \in (-1,1),\  |\tvariable|<\sqrt{1-\rayvariable^2}\}$ is again the unit ball; see Figure \ref{fig_general_illustration_gamma}. 

In a fanbeam measurement situation, given a beam vertex position $\lambda \in \RR^2$, we consider $\gamma(\rayvariable,\tvariable)=\lambda+\tvariable \vartheta_{\rayvariable}$, where $\rayvariable$ represents an individual ray angle and $\vartheta_{\rayvariable}:=(\cos \rayvariable,\sin\rayvariable)^T\in \RR^2$ denotes the corresponding unit vector. 
If we assume $\lambda=(0,0)^T$ and set $\imgdom= \{x=(x_1,x_2)^T\in \RR^2 \ \big | \ x_1>0 \text{ and } \ 0.2<|x|<1\}$ the right half of an annulus with inner radius $0.2$ and outer radius $1$, we get $\raygroundset=(-\frac{\pi}{2},\frac{\pi}{2})$ and $\tgroundset(\rayvariable)= (0.2,1)$ independent of $\rayvariable$; thus $\gammagroundset=(-\frac{\pi}{2},\frac{\pi}{2}) \times (0.2,1)$; see Figure \ref{fig_general_illustration_gamma}.

The multiplicative factor $\weightfkt$ describes additional physical processes. It will sometimes be more convenient to express $\weightfkt$ as a function of $x\in \imgdom$ (where $x=\gamma(\rayvariable,\tvariable)$). In that case, we use the notation $\altweightfkt(x):=\weightfkt(\gamma^{-1}(x))$. 
 In the classical case of transmission tomography $\weightfkt=1$, but in single photon emission tomography $\weightfkt(\rayvariable,\tvariable)=e^{-\attenuation\tvariable}$ for constant $\attenuation>0$,  which models uniform attenuation. 
 


We would like to consider single projection operators in a suitable $L^\infty_c$ and $L^2$ context. Lemma \ref{lemma_basic_continuity} will achieve that goal.

\begin{lemma}
\label{lemma_basic_continuity}
Let $\Projind$ be a single projection operator. Then  $\Projind$ maps from $\mathcal{C}^\infty_c(\imgdom)$ into $L^\infty_c(\raygroundset)$, and is  linear. Moreover, there is a constant $c>0$ such that
\begin{equation}\label{equ_basic_continuity}
\|\Projind f\|_{L^{2}(\raygroundset)} \leq c \|f\|_{L^{2}(\imgdom)} \qquad \text{for all $f \in \mathcal{C}^\infty_c(\imgdom)$}.
\end{equation}
\end{lemma}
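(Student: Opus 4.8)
The plan is to verify the three claims (maps into $L^\infty_c(\raygroundset)$, linearity, $L^2$-continuity) in turn, with the $L^2$ bound \eqref{equ_basic_continuity} being the substantive part and the rest essentially bookkeeping. Linearity is immediate from the linearity of the integral in \eqref{equ_def_single_proj_operator} with respect to $f$. For the mapping property, fix $f\in\mathcal{C}^\infty_c(\imgdom)$. First I would observe that $\supp{f}$ is compact in $\imgdom$, so its preimage $K:=\gamma^{-1}(\supp{f})$ is compact in $\gammagroundset$; projecting $K$ onto the $\rayvariable$-axis yields a compact subset of the open interval $\raygroundset$, which shows $\supp{\Projind f}$ is compactly contained in $\raygroundset$. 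For essential boundedness: since $f$ is bounded (continuous with compact support), $\weightfkt\le C$, and for each $\rayvariable$ the set $\tgroundset(\rayvariable)$ is bounded — uniformly so, since $\gammagroundset$ is bounded, say $\tgroundset(\rayvariable)\subset(-M,M)$ — we get $|[\Projind f](\rayvariable)|\le 2MC\|f\|_\infty$ pointwise, hence $\Projind f\in L^\infty_c(\raygroundset)$. (Measurability of $\rayvariable\mapsto[\Projind f](\rayvariable)$ follows from continuity of the integrand in $(\rayvariable,\tvariable)$ together with the smooth dependence of the domain $\tgroundset(\rayvariable)$; alternatively one extends $f\circ\gamma$ by zero to a fixed box and applies Fubini/dominated convergence.)

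For the $L^2$ estimate, the key idea is to rewrite $[\Projind f](\rayvariable)$ as an integral over the fixed-width slice, bound it by Cauchy--Schwarz, and then pull the resulting double integral back to $\imgdom$ via the change of variables $x=\gamma(\rayvariable,\tvariable)$. Concretely: using $\tgroundset(\rayvariable)\subset(-M,M)$ and $\weightfkt\le C$,
\[
|[\Projind f](\rayvariable)|^2 \;\le\; \left(\int_{\tgroundset(\rayvariable)} |f(\gamma(\rayvariable,\tvariable))|\,C\dd\tvariable\right)^2 \;\le\; 2MC^2\int_{\tgroundset(\rayvariable)} |f(\gamma(\rayvariable,\tvariable))|^2\dd\tvariable .
\]
Integrating over $\rayvariable\in\raygroundset$ and using Tonelli gives
\[
\|\Projind f\|_{L^2(\raygroundset)}^2 \;\le\; 2MC^2 \iint_{\gammagroundset} |f(\gamma(\rayvariable,\tvariable))|^2 \dd\tvariable\dd\rayvariable .
\]
Now apply the change of variables $x=\gamma(\rayvariable,\tvariable)$: since $\gamma$ is a $\mathcal{C}^1$-diffeomorphism from $\gammagroundset$ onto $\imgdom$ with $\det\bigl(\tfrac{\dd\gamma^{-1}}{\dd x}\bigr)$ bounded, say $\bigl|\det\bigl(\tfrac{\dd\gamma^{-1}}{\dd x}\bigr)\bigr|\le L$ on $\imgdom$, the substitution rule yields $\iint_{\gammagroundset}|f(\gamma(\rayvariable,\tvariable))|^2\dd\tvariable\dd\rayvariable = \int_{\imgdom}|f(x)|^2\,\bigl|\det\bigl(\tfrac{\dd\gamma^{-1}}{\dd x}\bigr)(x)\bigr|\dd x \le L\|f\|_{L^2(\imgdom)}^2$. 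Combining, \eqref{equ_basic_continuity} holds with $c:=\sqrt{2MC^2L}$.

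The main obstacle, such as it is, is keeping the hypotheses of the change-of-variables theorem straight and making sure the bounding constant $c$ is uniform in $f$ — which it is, because $M$, $C$, and $L$ depend only on $\gammagroundset$, $\weightfkt$, and $\gamma$, not on $f$. A minor technical point worth a sentence is the measurability of $\Projind f$ and the legitimacy of Tonelli: these are handled by extending $(\rayvariable,\tvariable)\mapsto f(\gamma(\rayvariable,\tvariable))$ by zero outside $\gammagroundset$ to the bounded box $\raygroundset\times(-M,M)$, where it is a bounded measurable (indeed a.e. continuous) function, so Tonelli applies directly. No compactness of $\supp f$ beyond what was used above is needed for the estimate itself, only for the $L^\infty_c$ claim.
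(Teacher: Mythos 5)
Your proposal is correct and follows essentially the same route as the paper: pointwise bound via the finite measure of $\tgroundset(\rayvariable)$, compact support via the continuous image of $\supp{f}$ under the inverse parametrization, and the $L^2$ bound via Cauchy--Schwarz (the paper phrases it as Jensen, which is the same estimate here) followed by the change of variables $x=\gamma(\rayvariable,\tvariable)$, yielding the same constant up to notation. The only difference is cosmetic: you additionally comment on measurability and Tonelli, which the paper leaves implicit.
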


\begin{proof}
Linearity is a direct consequence of integrals' linearity. 

Given $f\in \mathcal{C}^\infty_c(\imgdom)$ and $\rayvariable\in \raygroundset$, we use the Hölder inequality ($|\int_A g (z)\dd z| \leq \mathcal{L}(A) \sup_{z\in A} |g(z)|$ with $\mathcal{L}(A)$ the Lebesgue measure of the set $A$) to  estimate
\begin{equation}
|[\Projind f] (\rayvariable)| \leq \mathcal{L}(\tgroundset(\rayvariable)) \sup_{\tvariable \in \tgroundset(\rayvariable)} |f(\gamma(\rayvariable,t)) \weightfkt(\rayvariable,t)|.
\end{equation}
Both the integrand and the length of the integration domain are bounded independent of $\rayvariable$; the former since $f$ and $\weightfkt$ are bounded; the latter since $\gammagroundset$ is bounded. 
Thus, $\Projind f$ is uniformly bounded. 

 Given $f$ with compact support, the set $\widetilde K = \rayinverse(\supp{f})$ is  compactly contained in $\raygroundset$ (being the continuous image of a compact set). For $\rayvariable \in \raygroundset \setminus \widetilde K$, the curve $\gamma(\rayvariable,\cdot)$ does not intersect $\supp{f}$ and consequently $f(\gamma(\rayvariable,t))=0$ for $\rayvariable \in \raygroundset \setminus \widetilde K$ and $\tvariable \in \tgroundset(\rayvariable)$. Looking at Definition \ref{def_projection_curves}, we can conclude $[\Projind f](\rayvariable)=0$ for $\rayvariable\in \raygroundset \setminus \widetilde K$. Therefore, $\supp{\Projind f}\subset \widetilde K$, and thus $\Projind f$ has compact support in $\raygroundset$. Therefore, we established that $\Projind$ maps into $L^\infty_c(\raygroundset)$.

Using the Jensen's Inequality \cite{durrett2019probability} and substituting $x=\gamma(\rayvariable,\tvariable)$, we see
\begin{multline}
\|\Projind f\|_{L^{2}(\raygroundset)}^{2} \overset{\text{def}}{=} \int_{\raygroundset} \left| \int _{\tgroundset(\rayvariable)} f(\gamma(\rayvariable,\tvariable))\weightfkt(\rayvariable,\tvariable)\dd \tvariable \right|^{2} \dd \rayvariable 
\\ 
\overset{\text{Jensen}}{\leq} 
\int_{\raygroundset}\int_{\tgroundset(\rayvariable)} |f(\gamma(\rayvariable,\tvariable))|^2 |\weightfkt(\rayvariable,\tvariable)|^2 \dd{\tvariable} \  \mathcal{L}(\tgroundset(\rayvariable)) \dd \rayvariable
\\
\overset{\text{subst.}}{=}\int_{\imgdom}|f(x)|^2|\altweightfkt(x)|^2 \left|\det\left(\frac{\dd {\gamma^{-1}}}{\dd x}(x)\right) \right| \mathcal{L}(\tgroundset(\rayinverse(x))) \dd x
\\
\leq \sup_{\rayvariable\in \raygroundset} \mathcal{L}(\tgroundset(\rayvariable))
\left\| \det \left(\frac{\dd{\gamma^{-1}}}{\dd {x}}\right)\right\|_{L^\infty} \|\weightfkt\|_{L^\infty}^{2}  \|f\|^{2}_{L^{2}(\imgdom)},
\end{multline}
where we used that $\mathcal{L}(\tgroundset(\rayinverse(x)))\leq \sup_{\rayvariable\in \raygroundset} \mathcal{L}(\tgroundset(\rayvariable))$ is bounded irrespective of $x$ and $\|\weightfkt\|_{L^\infty}=\|\altweightfkt\|_{L^\infty}$ for $\altweightfkt(x)=\weightfkt(\gamma^{-1}(x))$. So \eqref{equ_basic_continuity} holds with the constant $c= \sqrt{\sup_{\rayvariable\in \raygroundset} \mathcal{L}(\tgroundset(\rayvariable)) \left\| \det \left(\frac{\dd{\gamma^{-1}}}{\dd {x}}\right)\right\|_{L^\infty}} \|\weightfkt\|_{L^\infty}$.

\end{proof}

Note that if $\gamma$ and $\weightfkt$ were $k$-differentiable, we could replace $L^\infty_c$ with $\mathcal{C}^k_c$ in Lemma \ref{lemma_basic_continuity}. In particular, in the case of infinitely differentiable functions, one could replace $L^\infty_c$ with $\mathcal{C}^\infty_c$. We will, however, avoid such assumptions throughout this paper, as they are not necessary for our considerations.


\subsection{The identification of projection pair range conditions}
\label{section_projection_pair_identification_kernels}

\begin{notation}
Subsequently, $\Projind_1$ and $\Projind_2$ denote single projection operators (Definition \ref{def_projection_curves}) for the same $\imgdom$ but individual $\gammagroundset_{\projvariable}$, $\gamma_{\projvariable}$, $\raygroundset_{\projvariable}, \tgroundset_{\projvariable}$, and $\weightfkt_{\projvariable}$ for $\projvariable \in \{1,2\}$.
\end{notation}

\begin{definition}
Given two single projection operators $\Projind_1$ and $\Projind_2$, we define the projection pair operator 
\begin{align}
&\Proj=(\Projind_1,\Projind_2)\colon \mathcal{C}^\infty_c(\imgdom) \to L^\infty_c(\raygroundset_1)\times L^\infty_c(\raygroundset_2)
\\
&\text{with }\ \Proj f =(\Projind_1 f,\Projind_2 f), \notag
\end{align}
where $\Projind_1$ and $\Projind_2$ are given by Equation \eqref{equ_def_single_proj_operator}.
 
\end{definition}
We assume a fixed projection pair operator $\Proj=(\Projind_1,\Projind_2)$ throughout Section \ref{section_projection_pair_identification_kernels}.

We search for a relationship between $\Projind_1 f$ and $\Projind_2 f$ 
via two functionals $F_1$ and $F_2$ which, when acting on $\Projind_1$ and $\Projind_2$, give 
\begin{equation}\label{equ_def_most_general_range_condition}
F_1(\Projind_1 f) = F_2(\Projind_2 f) \qquad \text{for all }f\in \mathcal{C}^\infty_c(\imgdom). 
\end{equation} 
When $F_1$ and $F_2$ are sufficiently regular, checking \eqref{equ_def_most_general_range_condition} is equivalent (using the Riesz representation \cite[ch.6 section 4, part 8]{royden2010real}) to the total mass in the two projections being equal under suitable weights $\Vone$ and $\Vtwo$ on $\raygroundset_1,\raygroundset_2$, respectively; see \eqref{equ_general_zero_order_consistency} below.

Thus,  for all $g=(g_1,g_2) \in \rg{\Proj}:=\{\Proj f \in L^\infty_c(\raygroundset_1)\times L^\infty_c(\raygroundset_2) \ \big | \ f \in \mathcal{C}^\infty_c(\imgdom)\}$, we have $\langle(g_1,g_2),(\Vone,-\Vtwo)\rangle=0$ and therefore obtained a necessary range condition. This inner product form suggests the pair $(\Vone,-\Vtwo)$ being (in some sense) in the orthogonal complement of the range.
We associate these $(\Vone,\Vtwo)$ with the projection pair operator $\Proj=(\Projind_1,\Projind_2)$ and call these the projection kernels.



\begin{definition}
\label{def_LPPRC_kernel}
Given a projection pair operator  $\Proj=(\Projind_1,\Projind_2)$, we say a pair of functions   $\Vone\in L^1_\text{loc}(\raygroundset_1)$  and $\Vtwo\in L^1_\text{loc}(\raygroundset_2)$ are projection kernels  for $\Proj$ when
\begin{multline}\label{equ_general_zero_order_consistency}
\int_{\raygroundset_1} [\Projind_1 f](\rayvariable_1) \Vone(\rayvariable_1) \dd {\rayvariable_1} = \int_{\raygroundset_2} [\Projind_2 f](\rayvariable_2) \Vtwo(\rayvariable_2) \dd {\rayvariable_2}  \qquad \text{for all }f\in \mathcal{C}^\infty_c(\imgdom)
\end{multline}
and $\Vone,\Vtwo$ are not constantly zero (almost everywhere). 

A projection pair range condition (\conditionname) induced by a pair of projection kernels $(\Vone,\Vtwo)$ and applied to $g=(g_1,g_2)\in L^\infty_c(\raygroundset_1)\times L^\infty_c(\raygroundset_2)$ is the requirement that
\begin{equation}\label{equ_dcc_controlable_condition}
\int_{\raygroundset_1} g_1(\rayvariable_1) \Vone(\rayvariable_1) \dd {\rayvariable_1} = \int_{\raygroundset_2} g_2(\rayvariable_2) \Vtwo(\rayvariable_2) \dd {\rayvariable_2}. 
\end{equation}
\end{definition}
If there is an $f \in \mathcal{C}^\infty_c(\imgdom)$ such that $g_1=\Projind_1f$ and $g_2=\Projind_2f$ (i.e., $g\in \rg{\Proj}$), then the corresponding \conditionname{}  is satisfied. Thus, \eqref{equ_dcc_controlable_condition} is a necessary condition for $g$ to be in the range of $\Proj$. 

Note that $\Projind_{\projvariable}f$ is bounded and has compact support (see Lemma \ref{lemma_basic_continuity}), and $\Vvariable$ is integrable on that compact support, hence the integrals in \eqref{equ_general_zero_order_consistency} and \eqref{equ_dcc_controlable_condition} are finite. Given some data $g=(g_1,g_2)$, checking the \conditionname{} in \eqref{equ_dcc_controlable_condition} is a way to falsify that $g\in \rg{\Proj}$.

\begin{remark}
\label{remark_orthogonal_conditions}
The range condition \eqref{equ_dcc_controlable_condition} can be understood as a dual pairing between $(\Vone,-\Vtwo)\in L^1_\text{loc}(\raygroundset_1)\times L^1_\text{loc}(\raygroundset_2)$ and $(g_1,g_2)\in L^\infty_c(\raygroundset_1)\times L^\infty_c(\raygroundset_2)$. Correspondingly, one can understand projection kernels as inducing a regular distribution that is an annihilator of the range of a projection pair operator $\Proj=(\Projind_1,\Projind_2)$.

There might be annihilators in the dual space of $L^\infty_c(\raygroundset_1)\times L^\infty_c(\raygroundset_2)$ that are not regular, i.e., not locally integrable (and thus not  projection kernels as we understand them), but investigating such annihilators is beyond the scope of this work. 

\end{remark}

Next, we investigate how to identify these projection kernels. To that end, we will require an additional assumption.
The functions $(\rayinverse_1,\rayinverse_2)\colon \imgdom \to \Xgroundset := \{(\rayinverse_1(x),\rayinverse_2(x))\in \raygroundset_1\times \raygroundset_2 \ \big|\ x \in \imgdom\}$ map a point $x\in \imgdom$ to $(\rayvariable_1,\rayvariable_2)=(\rayinverse_1,\rayinverse_2)(x)$ such that $x\in \gamma_1(\rayvariable_1,\cdot)\cap \gamma_2(\rayvariable_2,\cdot)$. The set $\gamma_1(\rayvariable_1,\cdot)\cap \gamma_2(\rayvariable_2,\cdot)$ could contain multiple elements. However, we want $\gamma_1(\rayvariable_1,\cdot)\cap \gamma_2(\rayvariable_2,\cdot)$ to have at most one element $x$ so one can identify each $(\rayvariable_1,\rayvariable_2)$ with a single $x\in \imgdom$. Usually, there are non-intersecting curves, and in this case, $\Xgroundset$ is a strict subset of $\raygroundset_1\times \raygroundset_2$. In any case, $\Xgroundset$ is bounded since $\raygroundset_1$ and $\raygroundset_2$ are. The set $\Xgroundset$ is also connected as the image of the connected set $\imgdom$ under the continuous map $(\rayinverse_1,\rayinverse_2)$.

\begin{assumption}
\label{Assumption_curve_independent}
The map $(\rayinverse_1,\rayinverse_2)\colon \imgdom \to \Xgroundset$ is injective and $\frac{\dd (\rayinverse_1,\rayinverse_2)}{\dd x}$ is regular (i.e., the Jacobi matrix is invertible).
\end{assumption}

\begin{lemma} \label{lemma_X_exists}
Assumption \ref{Assumption_curve_independent} holds if and only if $\Xgroundset$ is open and
there is a continuously differentiable $X\colon \Xgroundset\to \imgdom$ such that $X(\rayinverse_1(x),\rayinverse_2(x))=x$ for $x\in \imgdom$. 
\end{lemma}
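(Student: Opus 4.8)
The plan is to prove the equivalence by interpreting Assumption \ref{Assumption_curve_independent} through the Inverse Function Theorem. For the forward direction, suppose $(\rayinverse_1,\rayinverse_2)\colon \imgdom \to \Xgroundset$ is injective with everywhere-invertible Jacobian. Since $\gamma_1^{-1}=(\rayinverse_1,\tinverse_1)$ and $\gamma_2^{-1}=(\rayinverse_2,\tinverse_2)$ are $\mathcal{C}^1$ by Notation \ref{Notation_basic_introduction}, the map $\Phi := (\rayinverse_1,\rayinverse_2)$ is $\mathcal{C}^1$ on the open set $\imgdom$. The Inverse Function Theorem then gives, around each $x_0\in\imgdom$, an open neighborhood on which $\Phi$ is a $\mathcal{C}^1$-diffeomorphism onto an open subset of $\RR^2$; in particular $\Phi$ is an open map, so $\Xgroundset=\Phi(\imgdom)$ is open. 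Because $\Phi$ is also globally injective, it is a bijection $\imgdom\to\Xgroundset$, and its inverse $X:=\Phi^{-1}$ is well-defined and, being locally a $\mathcal{C}^1$-inverse, is globally $\mathcal{C}^1$ on $\Xgroundset$. By construction $X(\rayinverse_1(x),\rayinverse_2(x))=X(\Phi(x))=x$ for all $x\in\imgdom$, giving the claimed map $X$.

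For the converse, suppose $\Xgroundset$ is open and a continuously differentiable $X\colon \Xgroundset\to\imgdom$ with $X\circ\Phi=\mathrm{id}_{\imgdom}$ exists. Injectivity of $\Phi$ is immediate: if $\Phi(x)=\Phi(y)$ then $x=X(\Phi(x))=X(\Phi(y))=y$. For regularity of the Jacobian, differentiate the identity $X(\Phi(x))=x$ using the chain rule to obtain $\frac{\dd X}{\dd{(\rayvariable_1,\rayvariable_2)}}(\Phi(x))\cdot\frac{\dd\Phi}{\dd x}(x) = \mathrm{Id}_{\RR^2}$ for every $x\in\imgdom$; a square matrix with a left inverse is invertible, hence $\frac{\dd\Phi}{\dd x}(x)=\frac{\dd(\rayinverse_1,\rayinverse_2)}{\dd x}(x)$ is regular. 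This establishes Assumption \ref{Assumption_curve_independent}.

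A small point worth addressing carefully is the interplay of the two directions' smoothness claims and the role of openness of $\Xgroundset$. In the forward direction openness of $\Xgroundset$ is a \emph{conclusion} (from the open mapping property of local diffeomorphisms), whereas in the converse it is a \emph{hypothesis} needed merely so that ``continuously differentiable $X\colon\Xgroundset\to\imgdom$'' is meaningful (differentiability on a non-open set being ambiguous). One should also note that the existence of $X$ as a mere set-theoretic inverse is automatic from injectivity once $\Phi$ is surjective onto $\Xgroundset$ (which it is, by definition of $\Xgroundset$); the content of the statement is really the regularity of this inverse, which is exactly what the Inverse Function Theorem supplies in one direction and the chain rule extracts in the other.

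The main obstacle, such as it is, is bookkeeping rather than depth: one must check that the \emph{locally} defined $\mathcal{C}^1$-inverses furnished by the Inverse Function Theorem patch together into a single globally $\mathcal{C}^1$ map on $\Xgroundset$. This follows because global injectivity of $\Phi$ forces the local inverses to agree on overlaps (they all equal the restriction of the global set-theoretic inverse $\Phi^{-1}$), and $\mathcal{C}^1$-ness is a local property; so the glued map $X$ is $\mathcal{C}^1$ on all of $\Xgroundset$. No compactness or boundedness is needed for this lemma beyond what Notation \ref{Notation_basic_introduction} already guarantees about the regularity of $\gamma_1,\gamma_2$ and their inverses.
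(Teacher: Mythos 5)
Your proof is correct and follows essentially the same route as the paper: the Inverse Function Theorem yields openness of $\Xgroundset$ and the $\mathcal{C}^1$ inverse $X$ in the forward direction, and injectivity plus the chain-rule identity $\frac{\dd X}{\dd{(\rayvariable_1,\rayvariable_2)}}(\Phi(x))\,\frac{\dd \Phi}{\dd x}(x)=\mathrm{Id}_2$ gives the converse. Your explicit patching of the local inverses is a slightly more careful spelling-out of a step the paper leaves implicit, but the argument is the same.
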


For the function $X$ in Lemma \ref{lemma_X_exists}, $X(\rayvariable_1,\rayvariable_2)$ will describe the intersection point of the curves $\gamma_1(\rayvariable_1,\cdot)$ and $\gamma_2(\rayvariable_2,\cdot)$, making $X$ the inverse to the mapping $(\rayinverse_1,\rayinverse_2)$.

\begin{proof}
Let us first assume Assumption \ref{Assumption_curve_independent} holds.
Since $(\rayinverse_1,\rayinverse_2)\colon \imgdom \to \Xgroundset$ is bijective, there is an inverse function $X\colon \Xgroundset \to \imgdom$.
Using the inverse function theorem \cite[Theorem 1.1.7]{hormander2015analysis}, $(\rayinverse_1,\rayinverse_2)(\imgdom)=\Xgroundset$ is open and $X$ is a $\mathcal{C}^1$-function. 

For the converse implication, let us assume there is a function $X$ as described in Lemma \ref{lemma_X_exists}. If there were $x,\tilde x \in \imgdom$ with $(\rayinverse_1,\rayinverse_2)(x)=(\rayinverse_1,\rayinverse_2)(\tilde x)$, then $x = X(\rayinverse_1(x),\rayinverse_2(x))=\tilde x$ implying $x=\tilde x$. Thus, $(\rayinverse_1,\rayinverse_2)$ is injective. Since $X$ is continuously differentiable and we have $x=X((\rayinverse_1,\rayinverse_2)(x))$, we see by differentiation that $\text{Id}_2= \left[\frac{\dd X}{\dd{(\rayvariable_1,\rayvariable_2)}}((\rayinverse_1,\rayinverse_2)(x))\right]\frac{\dd {(\rayinverse_1,\rayinverse_2)}}{\dd x}(x)$ with $\text{Id}_2\in \RR^{2\times 2}$ the identity matrix. Since $\text{Id}_2$ has full rank, so does $\frac{\dd {(\rayinverse_1,\rayinverse_2)}}{\dd x}(x)$.
\end{proof}

\begin{remark} \label{Remark_Assumption_violated_Fanbeam}
Most practical applications employ straight lines as curves. It is well-known that in the Euclidean two-dimensional space, two non-parallel lines meet once (although this intersection point is not necessarily in $\imgdom$). There are zero intersections if they are parallel but do not coincide, and infinitely many intersections if they do coincide.
Therefore, in the case of straight lines, the injectivity in Assumption \ref{Assumption_curve_independent} is relatively easily verified (or contradicted) as one only has to check that there are no parallel intersecting lines (no coinciding lines); see Figure \ref{fig_number_of_intersections}.


Assumption \ref{Assumption_curve_independent} can be quite strong. For example, it is not fulfilled for a pair of fanbeam projections such that the line joining the two sources crosses $\imgdom$; see Figure \ref{fig_number_of_intersections} b). 
 In Remark \ref{remark_assumption_has_failed}, we discuss how this assumption could be relaxed.


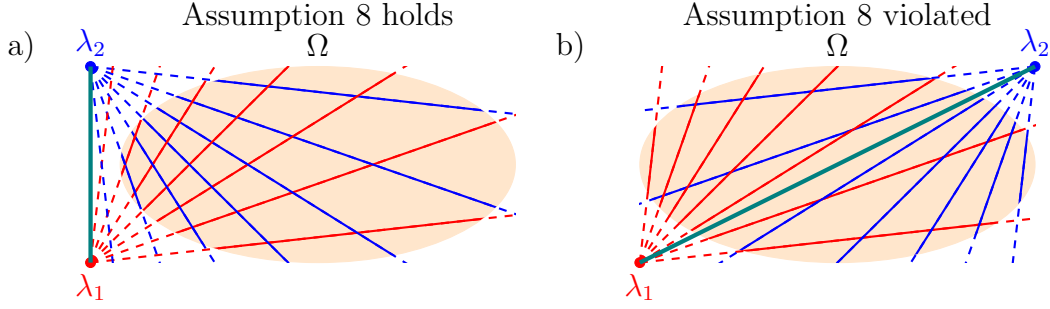
\begin{figure}
\center
\newcommand{\myfiguresize}{1.3}

\begin{tikzpicture}[scale=\myfiguresize]
\pgfmathsetmacro{\myangle}{90};
\pgfmathtruncatemacro\N{7}

\pgfmathsetmacro{\lambdaonex}{-2.3};
\pgfmathsetmacro{\lambdaoney}{-1};

\pgfmathsetmacro{\lambdatwox}{-2.3};
\pgfmathsetmacro{\lambdatwoy}{1};

\draw[](0,1.5) node[] {Assumption \ref{Assumption_curve_independent} holds};
\draw[](-3,1.2) node []{a)};

\draw[red,fill] (\lambdaonex,\lambdaoney) circle(0.05cm) node[below]{$\lambda_1$};
\draw[blue,fill] (\lambdatwox,\lambdatwoy) circle(0.05cm) node[above]{$\lambda_2$};

\draw[fill,orange!20] (0,0) ellipse (2cm and 1cm); \draw[] (0,1) node[above,black]{$\imgdom$};
\foreach \i in {1,...,\N}
{
\pgfmathsetmacro{\directionx}{cos(\i*90/\N-90/\N/2)*10};
\pgfmathsetmacro{\directiony}{sin(\i*90/\N-90/\N/2)*10};

\begin{scope}
\clip[] (0,0) ellipse (2cm and 1cm);

\draw[red,thick] (\lambdaonex,\lambdaoney) -- (\lambdaonex+\directionx,\lambdaoney+\directiony);

\draw[blue,thick] (\lambdatwox,\lambdatwoy) -- (\lambdatwox+\directionx,\lambdatwoy-\directiony);
\end{scope}

\begin{scope}
\clip (-2.5,-1) rectangle (2,1);

\draw[red,thick,dashed] (\lambdaonex,\lambdaoney) -- (\lambdaonex+\directionx,\lambdaoney+\directiony);

\draw[blue,thick, dashed] (\lambdatwox,\lambdatwoy) -- (\lambdatwox+\directionx,\lambdatwoy-\directiony);

\end{scope}
}
\draw[teal, ultra thick] (\lambdaonex,\lambdaoney) -- (\lambdatwox,\lambdatwoy);
\end{tikzpicture}\hfill
\begin{tikzpicture}[scale=\myfiguresize]
\pgfmathsetmacro{\myangle}{90};
\pgfmathtruncatemacro\N{7}

\pgfmathsetmacro{\lambdaonex}{-2};
\pgfmathsetmacro{\lambdaoney}{-1};

\pgfmathsetmacro{\lambdatwox}{2};
\pgfmathsetmacro{\lambdatwoy}{1};
\draw[](0,1.5) node[] {Assumption \ref{Assumption_curve_independent} violated};

\draw[](-2.7,1.2) node []{b)};

\draw[red,fill] (\lambdaonex,\lambdaoney) circle(0.05cm) node[below]{$\lambda_1$};
\draw[blue,fill] (\lambdatwox,\lambdatwoy) circle(0.05cm) node[above]{$\lambda_2$};

\draw[fill,orange!20] (0,0) ellipse (2cm and 1cm); \draw[] (0,1) node[above,black]{$\imgdom$};
\foreach \i in {1,...,\N}
{
\pgfmathsetmacro{\directionx}{cos(\i*90/\N-90/\N/2)*10};
\pgfmathsetmacro{\directiony}{sin(\i*90/\N-90/\N/2)*10};

\begin{scope}
\clip[] (0,0) ellipse (2cm and 1cm);

\draw[red,thick] (\lambdaonex,\lambdaoney) -- (\lambdaonex+\directionx,\lambdaoney+\directiony);

\draw[blue,thick] (\lambdatwox,\lambdatwoy) -- (\lambdatwox-\directionx,\lambdatwoy-\directiony);
\end{scope}

\begin{scope}
\clip (-2,-1) rectangle (2,1);

\draw[red,thick,dashed] (\lambdaonex,\lambdaoney) -- (\lambdaonex+\directionx,\lambdaoney+\directiony);

\draw[blue,thick, dashed] (\lambdatwox,\lambdatwoy) -- (\lambdatwox-\directionx,\lambdatwoy-\directiony);

\end{scope}
}
\draw[teal, ultra thick] (\lambdaonex,\lambdaoney) -- (\lambdatwox,\lambdatwoy);
\end{tikzpicture} 
\caption{Geometric illustration of Assumption \ref{Assumption_curve_independent} for two fanbeam projections as described in Remark \ref{Remark_Assumption_violated_Fanbeam}. It shows in orange the domain $\imgdom$, and in red and blue the corresponding straight lines (curves), solid insight $\imgdom$, and dashed outside. On the left, the fanbeam vertex points are on the left, so that the line connecting  them (in teal) is outside $\imgdom$, while on the right, the connecting line is inside $\imgdom$. In particular, for the $\rayvariable_1$ and $\rayvariable_2$ forming said intersection line, there are infinitely many intersection points.}
\label{fig_number_of_intersections}
\end{figure}
\end{remark}

\begin{lemma}
\label{lemma_connected_separation}
Let Assumption \ref{Assumption_curve_independent}  hold. When $\stripfkt_{\projvariable}\colon \raygroundset_{\projvariable} \to \RR$ for $\projvariable\in \{1,2\}$ are measurable functions with $\stripfkt_1(\rayinverse_1(x))=\stripfkt_2( \rayinverse_2(x))$ for almost all $x\in \imgdom$, then there is a constant $c\in \RR$ with $\stripfkt_1=c$  and $\stripfkt_2=c$ almost everywhere on $\raygroundset_1$ and $\raygroundset_2$, respectively.
\end{lemma}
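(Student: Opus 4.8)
The plan is to transport the hypothesis from $\imgdom$ to the parameter domain $\Xgroundset$ using the inverse map $X$ of Lemma~\ref{lemma_X_exists}, reducing the claim to the intuitively clear statement that a function on the connected open set $\Xgroundset$ which coincides (a.e.) both with a function of $\rayvariable_1$ alone and with a function of $\rayvariable_2$ alone must be constant. I will prove a local version of this by a Fubini slicing argument over small rectangles and then globalize it by a connectedness (clopen) argument.

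First I would invoke Assumption~\ref{Assumption_curve_independent} together with Lemma~\ref{lemma_X_exists}: $\Xgroundset$ is open and $(\rayinverse_1,\rayinverse_2)\colon\imgdom\to\Xgroundset$ is a $\mathcal{C}^1$ diffeomorphism. Since $\mathcal{C}^1$ maps are locally Lipschitz and hence send Lebesgue null sets to null sets, applying $(\rayinverse_1,\rayinverse_2)$ to the null exceptional set $\{x\in\imgdom:\stripfkt_1(\rayinverse_1(x))\neq\stripfkt_2(\rayinverse_2(x))\}$ turns the hypothesis into: $\stripfkt_1(\rayvariable_1)=\stripfkt_2(\rayvariable_2)$ for a.e.\ $(\rayvariable_1,\rayvariable_2)\in\Xgroundset$. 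I would also record that the two coordinate projections of $\Xgroundset$ are exactly $\raygroundset_1$ and $\raygroundset_2$, since each $\rayinverse_{\projvariable}$ is onto $\raygroundset_{\projvariable}$ (for $\rayvariable\in\raygroundset_{\projvariable}$ choose $\tvariable$ with $(\rayvariable,\tvariable)\in\gammagroundset_{\projvariable}$ and take $x=\gamma_{\projvariable}(\rayvariable,\tvariable)$).

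Next, for the local step, around any point of $\Xgroundset$ I would pick an open rectangle $I\times J\subset\Xgroundset$; Fubini applied to the (null) exceptional set inside $I\times J$ shows that for a.e.\ $\rayvariable_1\in I$ we have $\stripfkt_1(\rayvariable_1)=\stripfkt_2(\rayvariable_2)$ for a.e.\ $\rayvariable_2\in J$ (and symmetrically). Fixing one such admissible $\rayvariable_1$ pins $\stripfkt_2$ to a constant $c$ a.e.\ on $J$, and feeding this back pins $\stripfkt_1$ to the same constant $c$ a.e.\ on $I$; so every point of $\Xgroundset$ has a rectangular neighborhood carrying a well-defined ``local constant''. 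For the global step I would fix a base point with its constant $c_0$ and let $A\subseteq\Xgroundset$ be the set of points whose local constant can be taken to be $c_0$; $A$ is open (pass to subrectangles), nonempty, and closed in $\Xgroundset$ (if $p$ is a limit point, overlap one of its rectangles with a nearby rectangle from $A$; the two local constants must agree a.e.\ on the positive-measure common strip), so $A=\Xgroundset$ by connectedness. Finally, since $\Xgroundset$ projects onto $\raygroundset_1$, the first-coordinate intervals of these rectangles form an open cover of $\raygroundset_1$ on each of which $\stripfkt_1=c_0$ a.e.; a countable (Lindelöf) subcover yields $\stripfkt_1=c_0$ a.e.\ on $\raygroundset_1$, and the symmetric argument gives $\stripfkt_2=c_0$ a.e.\ on $\raygroundset_2$.

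I do not expect a genuinely hard step here; the effort is in the measure-theoretic bookkeeping. The points to be careful about are: verifying that $(\rayinverse_1,\rayinverse_2)$ really does transport the null exceptional set (so the hypothesis transfers to $\Xgroundset$ without loss), keeping the Fubini slicing honest (which is why working with rectangles rather than arbitrary slices is convenient), and making sure the clopen argument is carried out inside $\Xgroundset$ and not in $\RR^2$. The main risk is sloppiness with ``almost everywhere'', not a conceptual obstruction.
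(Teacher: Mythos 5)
Your proposal is correct and follows essentially the same route as the paper: transport the identity to $\Xgroundset$ via the diffeomorphism, observe that on product neighborhoods $U_1\times U_2\subset\Xgroundset$ the separated-variables identity forces both sides to be a common constant, and globalize by connectedness and surjectivity of the projections onto $\raygroundset_1$ and $\raygroundset_2$. Your explicit Fubini slicing and the clopen argument on $\Xgroundset$ merely make rigorous the steps the paper states more briefly (``both sides must be constant almost everywhere'' and ``a locally constant function on an open and connected set is globally constant'').
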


\begin{remark}
\label{remark_assumption_curves_independent}
In the parallel beam setting with directions $\vartheta_1$ and $\vartheta_2$, the functions $\stripfkt_{\projvariable}\circ\rayinverse_{\projvariable}$ represent parallel-line images, i.e., they are constant along straight lines in directions $\vartheta_1^{\rotated}$ and $\vartheta_2^{\rotated}$; see Figure \ref{fig_stripe_images_parallel}. It seems evident that two such functions can only coincide when they do not show parallel-line structure, which is only the case when they attain the same constant value everywhere, as stated in Lemma \ref{lemma_connected_separation}.
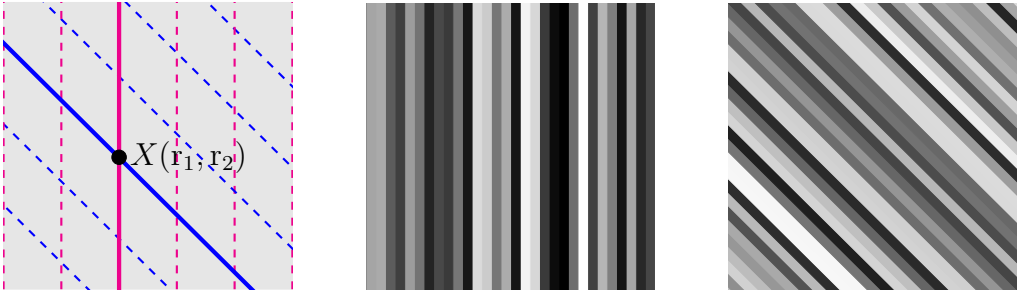
\begin{figure}
\center
\newcommand{\myscale}{1.9}

\begin{tikzpicture}[scale=\myscale]

\pgfextra{\pgfmathsetseed{1}}%
\pgfmathsetmacro{\myangle}{90};
\pgfmathtruncatemacro\N{5}
\pgfmathsetmacro{\stripewidth}{1/\N};

\draw[fill, gray!20] (-1,-1) rectangle (1,1);

\begin{scope}
\clip (-1,-1) rectangle (1,1);
\foreach \s in {0,...,\N}
{
\pgfmathsetmacro{\position}{\s/\N*2-1};

\draw [thick,dashed,magenta,rotate around={\myangle:(0,0)}] (-2,\position) -- (2,\position);
\draw [thick,dashed, blue,rotate around={135:(0,0)}] (-2,\position) -- (2,\position);
}
\pgfmathtruncatemacro\s{3}
\pgfmathsetmacro{\position}{\s/\N*2-1};

\draw [ultra thick,magenta,rotate around={\myangle:(0,0)}] (-2,\position) -- (2,\position);
\draw [ultra thick, blue,rotate around={135:(0,0)}] (-2,\position) -- (2,\position);

\draw[black,fill] (-0.2,-0.08) circle (0.05cm) node[right]{$X(\rayvariable_1,\rayvariable_2)$};
\end{scope}

\end{tikzpicture} \qquad 
\begin{tikzpicture}[scale=\myscale]

\pgfextra{\pgfmathsetseed{1}}%
\pgfmathsetmacro{\myangle}{90};
\pgfmathtruncatemacro\N{30}

\pgfmathsetmacro{\stripewidth}{2/\N};

\clip(-1,-1) rectangle (1,1);

\foreach \s in {0,...,\N}
{
\pgfmathsetmacro{\position}{\s/\N*2-1};

\pgfmathsetmacro{\rng}{rand+1};
\pgfmathsetmacro{\rng}{\rng/2};

\definecolor{mycolor}{gray}{\rng};

\draw [fill,mycolor,rotate around={\myangle:(0,0)}] (-2,\position) rectangle (2,\position+\stripewidth);
}

\end{tikzpicture} \qquad  \begin{tikzpicture}[scale=\myscale]
\pgfextra{\pgfmathsetseed{1}}%
\pgfmathsetmacro{\myangle}{-45};
\pgfmathtruncatemacro\N{30}

\pgfmathtruncatemacro\NN{2*\N}

\pgfmathsetmacro{\stripewidth}{2/\N};

\clip(-1,-1) rectangle (1,1);

\foreach \s in {0,...,\NN}
{
\pgfmathsetmacro{\position}{\s/\N*2-2};

\pgfmathsetmacro{\rng}{rand+1};
\pgfmathsetmacro{\rng}{\rng/2};

\definecolor{mycolor}{gray}{\rng};

\draw [fill,mycolor,rotate around={\myangle:(0,0)}] (-2,\position) rectangle (2,\position+\stripewidth);
}

\end{tikzpicture}
\caption{
Illustration of Lemma \ref{lemma_connected_separation} as described in Remark \ref{remark_assumption_curves_independent} for parallel beam projections with directions $\vartheta_1=(1,0)^T$ and $\vartheta_2=\frac{1}{\sqrt 2} (1,1)^T$ (associated with angles $0^\circ$ and $45^\circ$).
On the left, an illustration of the intersection point of the thick lines $\gamma_1(\rayvariable_1,\cdot)$ and $\gamma_2(\rayvariable_2,\cdot)$ being $X(\rayvariable_1,\rayvariable_2)$.
In the center and on the right, an illustration of typical functions $\stripfkt_1(\rayinverse_1(x))$ and $\stripfkt_2(\rayinverse_2(x))$.
}
\label{fig_stripe_images_parallel}
\end{figure}
\end{remark}

\begin{proof}[\textbf{Proof of Lemma \ref{lemma_connected_separation}}]

Let $U_1\subset \raygroundset_1$ and $U_2\subset \raygroundset_2$  be open sets such that $U_1\times U_2\subset \Xgroundset$.
Setting $x=X(\rayvariable_1,\rayvariable_2)$ ($X$ as in Lemma \ref{lemma_X_exists}), we see that 
\begin{equation} \label{equ_proof_assumption_lemma}
 \stripfkt_1(\rayvariable_1)= \stripfkt_1(\rayinverse_1(x))= \stripfkt_2(\rayinverse_2(x))= \stripfkt_2(\rayvariable_2) \qquad \text{ for almost all  $(\rayvariable_1,\rayvariable_2)\in U_1\times U_2$}
 \end{equation}
 (since diffeomorphisms map almost everywhere in $\imgdom$ onto almost everywhere in $\Xgroundset$). Since the far-left  side and the far-right side of \eqref{equ_proof_assumption_lemma} depend only on $\rayvariable_1$ or $\rayvariable_2$, respectively, we see that both sides must be constant almost everywhere. Thus, there is a constant $c\in \RR$ with $\stripfkt_1=c$ on $U_1$ and $\stripfkt_2=c$ and $U_2$.
 
Note that $X$ is surjective, so given any $\rayvariable_1^*\in \raygroundset_1$, there is an $x\in \imgdom$ with $\rayinverse_1(x)=\rayvariable_1^*$ and setting $\rayvariable_2^*:=\rayinverse_2(x)$ we see that $(\rayvariable_1^*,\rayvariable_2^*)\in \Xgroundset$. Since $\Xgroundset$ is open (Lemma \ref{lemma_X_exists}), there are neighborhoods $U_1$ and $U_2$ of $\rayvariable_1^*$ and $\rayvariable_2^*$, respectively, such that $U_1\times U_2\subset \Xgroundset$.
As stated above, $\stripfkt_1$ and $\stripfkt_2$ are constant on such $U_1$ and $U_2$. Thus, $\stripfkt_1$ is a locally constant function, i.e., given any point $\rayvariable_1^* \in \raygroundset_1$, there is an open set $U_1\subset \raygroundset_1$ containing $\rayvariable_1^*$ and a constant $c\in \RR$ such that $\stripfkt_1(\rayvariable_1)=c$ on $U_1$.
A locally constant function $\stripfkt_1$ on an
open and connected set $\raygroundset_1$ is globally constant. The same arguments hold for $\stripfkt_2$.
\end{proof}

The subsequent theorem describes a condition for the $\Vone, \Vtwo$ being projection kernels of a \conditionname{} and shows that they are unique in a reasonable sense. Specifically, there is at most one range condition in the form of projection kernels for any projection pair operator.

\begin{theorem}
\label{thm_kernel_condition}
Let $\Proj=(\Projind_1,\Projind_2)$ be a projection pair operator such that $\gamma_{1}$ and $\gamma_{2}$ satisfy Assumption  \ref{Assumption_curve_independent}.
Let $\Vone\in L^1_\text{loc}(\raygroundset_1)$ and $\Vtwo\in L^1_\text{loc}(\raygroundset_2)$. The pair $(\Vone,\Vtwo)$ is a pair of projection kernels of a \conditionname{} for $\Projind_1$ and $\Projind_2$ if and only if $\Vone$ and $\Vtwo$ are almost everywhere non-zero and the `kernel condition'
\begin{equation}
\label{equ_theorem_kernel_conditions}
\frac{\rhoone\big( \gamma_1^{-1}(x)\big)  \left |\det\left( \frac{\dd { \gamma_{1}^{-1 }}}{\dd x}(x)\right)\right|}{\rhotwo\big(\gamma_2^{-1}(x)\big)  \left|\det\left( \frac{ \dd{\gamma_{2}^{-1 }}}{\dd x}(x)\right)\right|} = \frac{\Vtwo(\rayinverse_2(x))}{\Vone(\rayinverse_1(x))} \qquad \text{for a.e. }x\in \imgdom
\end{equation}
is satisfied. Equation  \eqref{equ_theorem_kernel_conditions} is equivalent to
\begin{equation}
\label{equ_theorem_kernel_conditions_2}
\frac{\altrhoone\big(X(\rayvariable_1,\rayvariable_2)\big)  \left |\det\left( \frac{\dd { \gamma_{1}^{-1 }}}{\dd x}(X(\rayvariable_1,\rayvariable_2))\right)\right|}{\altrhotwo\big(X(\rayvariable_1,\rayvariable_2)\big)  \left|\det\left( \frac{\dd{ \gamma_{2}^{-1 }}}{\dd x}(X(\rayvariable_1,\rayvariable_2))\right)\right|} = \frac{\Vtwo(\rayvariable_2)}{\Vone(\rayvariable_1)} \qquad \text{
for a.e. }(\rayvariable_1,\rayvariable_2)\in \Xgroundset
\end{equation}
with $\altrhoi(x)=\rhoi(\gamma^{-1}_{\projvariable}(x))$.
When $(\widetilde \Vone,\widetilde \Vtwo)$ is a second pair of projection kernels, there is a constant $c\in\RR$ with $\widetilde{ \mathrm{V}_\mathrm{i}\, }=c\Vvariable$ a.e. on $\raygroundset_{\projvariable}$ for $\projvariable\in \{1,2\}$. Moreover, $\Vone$ and $\Vtwo$ have the same, constant sign; i.e., they are either both positive or both negative.
\end{theorem}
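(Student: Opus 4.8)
The key identity is the change-of-variables formula that rewrites the single-projection integral \eqref{equ_general_zero_order_consistency} as an integral over $\imgdom$. For $\projvariable\in\{1,2\}$ and $f\in\mathcal C^\infty_c(\imgdom)$, I would substitute $x=\gamma_\projvariable(\rayvariable_\projvariable,\tvariable_\projvariable)$ in
\[
\int_{\raygroundset_\projvariable}[\Projind_\projvariable f](\rayvariable_\projvariable)\,\Vvariable(\rayvariable_\projvariable)\dd{\rayvariable_\projvariable}
=\int_{\raygroundset_\projvariable}\int_{\tgroundset_\projvariable(\rayvariable_\projvariable)}f(\gamma_\projvariable(\rayvariable_\projvariable,\tvariable_\projvariable))\,\rhoi(\rayvariable_\projvariable,\tvariable_\projvariable)\,\Vvariable(\rayvariable_\projvariable)\dd{\tvariable_\projvariable}\dd{\rayvariable_\projvariable},
\]
which, writing $\gamma_\projvariable^{-1}=(\rayinverse_\projvariable,\tinverse_\projvariable)$, becomes $\int_{\imgdom}f(x)\,W_\projvariable(x)\dd x$ with
$W_\projvariable(x):=\altrhoi(x)\,\Vvariable(\rayinverse_\projvariable(x))\,\bigl|\det\frac{\dd\gamma_\projvariable^{-1}}{\dd x}(x)\bigr|$.
(This requires noting $W_\projvariable\in L^1_{\mathrm{loc}}(\imgdom)$ so the substitution is legitimate: $\altrhoi$ and the Jacobian determinant are bounded by hypothesis, and $\Vvariable\circ\rayinverse_\projvariable$ is locally integrable on $\imgdom$ because $\Vvariable\in L^1_{\mathrm{loc}}(\raygroundset_\projvariable)$, $\rayinverse_\projvariable$ is a $\mathcal C^1$ submersion with bounded Jacobian, and $\rayinverse_\projvariable$ of a compact subset of $\imgdom$ lies in a compact subset of $\raygroundset_\projvariable$.) Then \eqref{equ_general_zero_order_consistency} says $\int_\imgdom f(x)(W_1(x)-W_2(x))\dd x=0$ for all $f\in\mathcal C^\infty_c(\imgdom)$, which by the fundamental lemma of the calculus of variations forces $W_1=W_2$ a.e. on $\imgdom$.

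From $W_1=W_2$ a.e., the next step is to deduce that $\Vone\circ\rayinverse_1$ and $\Vtwo\circ\rayinverse_2$ are a.e. non-zero, and then to divide to obtain \eqref{equ_theorem_kernel_conditions}. For the non-vanishing: the set $Z_1:=\{x:\Vone(\rayinverse_1(x))=0\}$ equals $\rayinverse_1^{-1}(\{\Vone=0\})$; on $Z_1$ we have $W_1=0$, hence $W_2=0$ there, hence (since $\altrhotwo$ and the Jacobian are bounded below away from $0$) $\Vtwo(\rayinverse_2(x))=0$ on $Z_1$. Apply Lemma \ref{lemma_connected_separation} with $\stripfkt_\projvariable=\mathbf 1_{\{\Vvariable=0\}}$ (measurable, and satisfying $\stripfkt_1\circ\rayinverse_1=\stripfkt_2\circ\rayinverse_2$ a.e. on $\imgdom$ by the previous sentence together with the symmetric argument): the conclusion is that these indicator functions are a.e.\ equal to a common constant $c\in\{0,1\}$. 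If $c=1$ then $\Vone=0$ a.e., contradicting the definition of projection kernels; hence $c=0$, so $\Vone\ne0$ a.e.\ on $\raygroundset_1$ and $\Vtwo\ne0$ a.e.\ on $\raygroundset_2$. With non-vanishing in hand, dividing $W_1=W_2$ by $\rhotwo(\gamma_2^{-1}(x))|\det\frac{\dd\gamma_2^{-1}}{\dd x}(x)|\cdot\Vone(\rayinverse_1(x))$ (a.e.\ nonzero) yields \eqref{equ_theorem_kernel_conditions}. The converse direction simply reverses this chain: given \eqref{equ_theorem_kernel_conditions} with $\Vone,\Vtwo$ a.e.\ nonzero, multiply up to get $W_1=W_2$ a.e., integrate against arbitrary $f$, and change variables back to recover \eqref{equ_general_zero_order_consistency}; the equivalence of \eqref{equ_theorem_kernel_conditions} with \eqref{equ_theorem_kernel_conditions_2} is the substitution $x=X(\rayvariable_1,\rayvariable_2)$ using Lemma \ref{lemma_X_exists} (this is the statement that $(\rayinverse_1,\rayinverse_2)$ and $X$ are mutually inverse diffeomorphisms, so a.e.\ in $\imgdom$ corresponds to a.e.\ in $\Xgroundset$, and $\rayinverse_\projvariable(X(\rayvariable_1,\rayvariable_2))=\rayvariable_\projvariable$).

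For uniqueness: if $(\widetilde\Vone,\widetilde\Vtwo)$ is a second pair of projection kernels, it also satisfies \eqref{equ_theorem_kernel_conditions}, so dividing the two kernel conditions gives $\widetilde\Vone(\rayinverse_1(x))/\Vone(\rayinverse_1(x))=\widetilde\Vtwo(\rayinverse_2(x))/\Vtwo(\rayinverse_2(x))$ for a.e.\ $x\in\imgdom$ (the left-hand fraction is well-defined a.e.\ since $\Vone\ne0$ a.e., similarly on the right). Setting $\stripfkt_\projvariable:=\widetilde{\mathrm V_\mathrm i\,}/\Vvariable$ on $\raygroundset_\projvariable$, these are measurable functions with $\stripfkt_1\circ\rayinverse_1=\stripfkt_2\circ\rayinverse_2$ a.e.\ on $\imgdom$, so Lemma \ref{lemma_connected_separation} yields a constant $c\in\RR$ with $\stripfkt_1=c$ and $\stripfkt_2=c$ a.e., i.e.\ $\widetilde{\mathrm V_\mathrm i\,}=c\Vvariable$ a.e.\ on $\raygroundset_\projvariable$; $c\ne0$ because $\widetilde{\mathrm V_\mathrm i\,}$ is not a.e.\ zero. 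Finally, for the sign claim: in \eqref{equ_theorem_kernel_conditions} the left-hand side is positive a.e., because $\rhoone,\rhotwo$ take values in $[c,C]$ with $0<c<C$ and the two Jacobian determinants are nonzero a.e.\ (Assumption \ref{Assumption_curve_independent} gives invertibility of $\frac{\dd(\rayinverse_1,\rayinverse_2)}{\dd x}$, hence each $\frac{\dd\gamma_\projvariable^{-1}}{\dd x}$ has nonvanishing determinant a.e.). Therefore $\Vtwo(\rayinverse_2(x))/\Vone(\rayinverse_1(x))>0$ a.e.\ on $\imgdom$, i.e.\ $\mathrm{sign}\,\Vtwo\circ\rayinverse_2=\mathrm{sign}\,\Vone\circ\rayinverse_1$ a.e.; applying Lemma \ref{lemma_connected_separation} once more to $\stripfkt_\projvariable=\mathrm{sign}\,\Vvariable$ shows each $\mathrm{sign}\,\Vvariable$ is a.e.\ equal to one common constant in $\{-1,+1\}$ (it cannot be $0$ since $\Vvariable\ne0$ a.e.), so $\Vone$ and $\Vtwo$ have the same constant sign.

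The main obstacle I anticipate is the measure-theoretic bookkeeping needed to justify the change of variables and the ``a.e.\ transfer'' between $\imgdom$ and the $\raygroundset_\projvariable$'s: one must check that $W_\projvariable\in L^1_{\mathrm{loc}}(\imgdom)$ so Fubini/Tonelli and the substitution formula apply with merely $L^1_{\mathrm{loc}}$ weights and a $\mathcal C^1$ (not smooth) diffeomorphism, and that null sets in $\raygroundset_\projvariable$ pull back to null sets in $\imgdom$ under $\rayinverse_\projvariable$ and vice versa — which follows from the two-sided bounds on the Jacobian of $\gamma_\projvariable$ assumed in Notation \ref{Notation_basic_introduction}, but needs to be invoked carefully. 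Everything else is a clean application of Lemma \ref{lemma_connected_separation} and the fundamental lemma of the calculus of variations.
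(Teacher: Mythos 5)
Your proposal is correct and follows essentially the same route as the paper: change of variables to reduce \eqref{equ_general_zero_order_consistency} to an equality of weight functions on $\imgdom$, the fundamental lemma of the calculus of variations, Lemma \ref{lemma_connected_separation} to rule out vanishing and to obtain uniqueness and constant sign, then division to get \eqref{equ_theorem_kernel_conditions}. The only cosmetic difference is that the paper handles non-vanishing and sign constancy in a single application of Lemma \ref{lemma_connected_separation} with $\stripfkt_{\projvariable}=\sign(\Vvariable)$, whereas you split this into an indicator-function step and a separate sign step; your added care about local integrability of the weights and null-set transfer is a welcome tightening of details the paper leaves implicit.
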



\begin{remark}
The kernel conditions \eqref{equ_theorem_kernel_conditions} and \eqref{equ_theorem_kernel_conditions_2} offer a constructive way to determine projection kernels and associated \conditionname s. The left-hand side of the kernel condition \eqref{equ_theorem_kernel_conditions_2} contains known information (the factors $\altrhoi$ and the geometric information $X$ and $\left |\det\left( \frac{\dd { \gamma_{\projvariable}^{-1 }}}{\dd x}(x)\right)\right|$) while the right-hand side contains the projection kernel functions we seek.  
\end{remark}

\begin{remark} \label{remark-nonexistence-algebraic}
In general, there is no guarantee that the equations \eqref{equ_theorem_kernel_conditions} and \eqref{equ_theorem_kernel_conditions_2} possess solutions $(\Vone,\Vtwo)$. This is illustrated in \eqref{equ_theorem_kernel_conditions_2}, where the right-hand side can be multiplicatively separated into functions dependent on $\rayvariable_1$ and $\rayvariable_2$, respectively. When the left-hand side does not possess the same algebraic structure, projection kernels cannot exist.
A central result of this paper is  that for the exponential fanbeam transform, the left-hand side is not algebraically separable; see Section \ref{section_no_condition_for_exponential_fanbeam}.
\end{remark}


\begin{proof}[\textbf{Proof of Theorem \ref{thm_kernel_condition}}]
Let us assume $(\Vone,\Vtwo)$ forms a pair of projection kernels for a \conditionname{} for $\Proj$, i.e., \eqref{equ_general_zero_order_consistency} is satisfied for any $f\in \mathcal{C}^\infty_c(\imgdom)$ and $\Vone,\Vtwo$ are not constantly zero. We aim to show \eqref{equ_theorem_kernel_conditions}. To that end, we substitute the definitions of $\Projind_1$ and $\Projind_2$ into \eqref{equ_general_zero_order_consistency} to see that
\begin{multline} \label{equ_proof_abstract_1}
\int_{\raygroundset_1} \left( \int_{\tgroundset_1(\rayvariable)} f(\gamma_{1}(\rayvariable,\tvariable)) \rhoone(\rayvariable,\tvariable) \dd \tvariable\right)\Vone(\rayvariable) \dd \rayvariable
\\= \int_{\raygroundset_2} \left( \int_{\tgroundset_2(\rayvariable)} f(\gamma_{2}(\rayvariable,\tvariable)) \rhotwo(\rayvariable,\tvariable) \dd \tvariable\right) \Vtwo(\rayvariable) \dd \rayvariable \qquad \text{for all }f\in \mathcal{C}^\infty_c(\imgdom).
\end{multline}
We perform the change of variables $x=x(\rayvariable,\tvariable)=\gamma_{\projvariable}(\rayvariable,\tvariable)$ in the two integrals (with $\projvariable=1$ on the left, $\projvariable=2$ on the right). 
Correspondingly, the \conditionname{} \eqref{equ_general_zero_order_consistency} is equivalent to
\begin{multline}\label{equ_proof_abstract_2}
\int_{\imgdom} f(x) \rhoone\big(\gamma_1^{-1}(x)\big) \left|\det\left(\frac{\dd { \gamma_{1}^{-1}}}{\dd x}(x) \right) \right| \Vone ( \rayinverse_1(x)) \dd x
\\
= \int_{\imgdom} f(x) \rhotwo\big(\gamma_2^{-1}(x)\big) \left|\det\left(\frac{\dd { \gamma_{2}^{-1}}}{\dd x}(x) \right) \right| \Vtwo ( \rayinverse_2(x)) \dd x
\end{multline}
for all $f\in \mathcal{C}^\infty_c(\imgdom)$.
By the fundamental lemma of variational calculus \cite{rgen1998calculus}, the integrands must coincide almost everywhere; more precisely,
\begin{multline}
\label{equ_proof_abstract_3}
\rhoone\big(\gamma_1^{-1}(x)\big) \left|\det\left(\frac{\dd {\gamma_{1}^{-1}}  }{\dd x} (x)\right) \right| \Vone ( \rayinverse_1(x)) 
\\ 
=
 \rhotwo\big(\gamma_2^{-1}(x)\big) \left|\det\left(\frac{\dd {\gamma_{2}^{-1}} }{\dd x}(x) \right) \right| \Vtwo ( \rayinverse_2(x))\qquad \text{for a.e. }x\in \imgdom.
\end{multline}
Rearranging \eqref{equ_proof_abstract_3} would yield \eqref{equ_theorem_kernel_conditions}, but one has to be wary to avoid division by zero. This is where Assumption \ref{Assumption_curve_independent} will be crucial.

Due to the assumptions, the functions $\rhotwo(\gamma_2^{-1}(x))$ and the Jacobi determinants $\left|\det\left(\frac{\dd {\gamma_{2}^{-1}} }{\dd x}(x) \right) \right|$ are positive on $\imgdom$; consequently, we can divide by them. We also wish to divide by $\Vone(\rayinverse_1(x))$ to obtain \eqref{equ_theorem_kernel_conditions}. 
To make this feasible, we need to check that it is not zero. 

We set $\stripfkt_1(\rayvariable_1)=\sign(V_1(\rayvariable_1))$ and $\stripfkt_2(\rayvariable_2)=\sign(V_2(\rayvariable_2))$, where $\sign(z)=1$ if $z>0$, $\sign(z)=-1$ if $z<0$ and $\sign(z)=0$ if $z=0$.
Since $\rhoi$ and the Jacobi determinants are positive, \eqref{equ_proof_abstract_3} implies $\stripfkt_1(\rayinverse_1(x))= \stripfkt_2(\rayinverse_2(x))$, and according to Lemma \ref{lemma_connected_separation}, the $\stripfkt_{\projvariable}(\cdot)=\sign(\Vvariable(\cdot))$ are constant almost everywhere. In particular, if $\Vone$ were zero on a set with non-zero measure, it would be zero almost everywhere, but by definition, projection kernels are not.

Thus, we can reformulate \eqref{equ_proof_abstract_3} by division of $\Vone(\rayinverse_1(x))$ to obtain \eqref{equ_theorem_kernel_conditions}, i.e., projection kernels $\Vone,\Vtwo$ of a \conditionname{}  also satisfy the kernel condition \eqref{equ_theorem_kernel_conditions}.
 To obtain condition \eqref{equ_theorem_kernel_conditions_2}, it remains to replace $x$ with $X(\rayvariable_1,\rayvariable_2)$; a bijective transform that does not impact the equation holding almost everywhere and recall $\altrhoi(x)=\rhoi(\gamma_{\projvariable}^{-1}(x))$.

Next, we show the reverse implication, i.e., that the kernel conditions \eqref{equ_theorem_kernel_conditions} imply that $\Vone$ and $\Vtwo$ are projection kernels.
Starting at \eqref{equ_theorem_kernel_conditions}, we multiply by $\Vone$ and $\rhotwo(\gamma_2^{-1}(x)) |\det\left( \frac{ \dd \gamma_2^{-1}}{\dd x}(x)\right)|$ to obtain \eqref{equ_proof_abstract_3}. Multiplying by $f(x)$ and integrating with respect to all $x\in \imgdom$ yields \eqref{equ_proof_abstract_2}, which is equivalent to \eqref{equ_general_zero_order_consistency}, as stated above.

It remains to show that two pairs of projection kernels $(\Vone,\Vtwo)$ and $(\widetilde \Vone,\widetilde \Vtwo)$ only differ by a constant factor.
This additional pair of projection kernels $(\widetilde \Vone,\widetilde \Vtwo)$ also satisfies \eqref{equ_theorem_kernel_conditions}. Therefore, 
\begin{equation} \label{equ_proof_abstract_5}
\frac{\rhoone(\gamma_1^{-1}(x))  \left |\det\left( \frac{\dd {\widetilde \gamma_{1}^{-1 }}}{\dd x}(x)\right)\right|}{\rhotwo(\gamma_2^{-1}(x))  \left|\det\left( \frac{\dd{\widetilde \gamma_{2}^{-1 }}}{\dd x}(x)\right)\right|} =\frac{\widetilde \Vtwo ( \rayinverse_2(x))}{\widetilde \Vone ( \rayinverse_1(x))}=  \frac{\Vtwo ( \rayinverse_2(x))}{\Vone ( \rayinverse_1(x))} \qquad \text{for a.e. }x\in \imgdom
\end{equation}
or equivalently (division is possible as discussed above)
\begin{equation} \label{equ_proof_abstract_6}
\stripfkt_2(\rayinverse_2(x)):=\frac{\widetilde \Vtwo ( \rayinverse_2(x))}{\Vtwo ( \rayinverse_2(x))}=  \frac{\widetilde \Vone ( \rayinverse_1(x))}{\Vone ( \rayinverse_1(x))} =: \stripfkt_1(\rayinverse_1(x))\qquad \text{for a.e. }x\in \imgdom.
\end{equation}
Again, this is the situation $\stripfkt_1(\rayinverse_1(x))=\stripfkt_2( \rayinverse_2(x))$ with $\stripfkt_{\projvariable}(\rayvariable_{\projvariable})= \frac{\widetilde \Vvariable ( \rayvariable_{\projvariable})}{\Vvariable ( \rayvariable_{\projvariable})}$
described in Lemma \ref{lemma_connected_separation}, and therefore, both functions $\stripfkt_{\projvariable}$ are constant on $\raygroundset_1$ and $\raygroundset_2$, respectively, with some common constant $c\in \RR$. This implies $\widetilde \Vone =c \Vone $ and $\widetilde \Vtwo =c \Vtwo $.


\end{proof}

\begin{remark}
\label{remark_assumption_has_failed}
Continuing Remark \ref{Remark_Assumption_violated_Fanbeam}, we want to illustrate some ramifications of Assumption \ref{Assumption_curve_independent} being violated. 
Assumption \ref{Assumption_curve_independent} was a necessity in presenting the entire theory rigorously. Without it, Lemma \ref{lemma_connected_separation} would no longer be true. In that case, there might be divisions zero by zero in \eqref{equ_theorem_kernel_conditions}, as well as a loss of uniqueness in Theorem \ref{thm_kernel_condition}. Moreover, the condition \eqref{equ_theorem_kernel_conditions_2} is nonsensical without the function $X$ (based on Lemma \ref{lemma_X_exists}).

If there is only a single pair of coinciding lines $L:=\gamma_1(\rayvariable_1^*,\cdot)=\gamma_2(\rayvariable_2^*,\cdot)$, as is the case in the fanbeam setting in Figure \ref{fig_number_of_intersections}, one can separate $\imgdom=\imgdom^1\cup \imgdom^2\cup L$ into two connected components plus the said line itself, and analogously $\raygroundset_1=\raygroundset_1^1\cup \raygroundset_1^2\cup\{\rayvariable_1^*\}$, $\raygroundset_2=\raygroundset_2^1\cup \raygroundset_2^2\cup\{\rayvariable_2^*\}$. 
Then, most of the theory can be applied to each component individually, and the coinciding line is only a set of zero measure and can, to some degree, be disregarded.

Then, one can try to find solutions $(\Vone^1,\Vtwo^1)$ to \eqref{equ_theorem_kernel_conditions} on $\imgdom_1$, and analogously $(\Vone^2,\Vtwo^2)$ on $\imgdom_2$. Setting $\Vone$ to $c_1\Vone^1$ on $\raygroundset_1^1$ and $c_2\Vone^2$ on $\raygroundset_1^2$ with constants $c_1$ and $c_2$ (and analogously for $\Vtwo$ with the same constants), \eqref{equ_theorem_kernel_conditions} is satisfied by $(\Vone,\Vtwo)$ almost everywhere on $\imgdom$. If such $(\Vone,\Vtwo)$ are $L^1_{\text{loc}}$ functions, they are still projection kernels as described in Definition \ref{def_projection_curves}. As we will see in Remark \ref{remark_mixed_hilbert_transform}, even if they are not $L^1_\text{loc}$ functions, they might still show some relation to range conditions induced by non-regular distributions. 
\end{remark}

A direct consequence of Theorem \ref{thm_kernel_condition} is that there are no \conditionname s in the case of truncated data, as we discuss next.

\begin{corollary} \label{cor_truncated_no_kernels}
Let the same assumptions as in Theorem \ref{thm_kernel_condition} hold.
Let $\widetilde\raygroundset_1\subset \raygroundset_1$ and $\widetilde\raygroundset_2\subset \raygroundset_2$ be measurable sets with non-zero measure such that $\raygroundset_1\setminus \widetilde \raygroundset_1$ is a set with non-zero measure. 
Given $\projvariable\in \{1,2\}$ and $f\in \mathcal{C}^\infty_c(\imgdom)$, we set $\widetilde \Projind_{\projvariable}f\colon\widetilde \raygroundset_{\projvariable} \to \RR$ with $\widetilde \Projind_{\projvariable}f=[\Projind_{\projvariable}f]_{|\widetilde  \raygroundset_{\projvariable}}$  the restrictions of $\Projind_1$ and $\Projind_2$ to $\widetilde \raygroundset_{1}$ and $\widetilde \raygroundset_{2}$, respectively.
Then, there are no non-zero functions $\widetilde \Vone\in L^1_\text{loc}(\widetilde \raygroundset_1)$, $\widetilde \Vtwo\in L^1_\text{loc}(\widetilde \raygroundset_2)$ such that
\begin{equation}\label{equ_truncated_PPRC}
\int_{\widetilde\raygroundset_1} [\widetilde \Projind_1 f](\rayvariable_1) \widetilde\Vone(\rayvariable_1) \dd {\rayvariable_1} = \int_{\widetilde\raygroundset_2} [\widetilde \Projind_2 f](\rayvariable_2) \widetilde\Vtwo(\rayvariable_2) \dd {\rayvariable_2}  \qquad \text{for all }f\in \mathcal{C}^\infty_c(\imgdom).
\end{equation}
\end{corollary}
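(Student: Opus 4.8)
The plan is to reduce the claim directly to Theorem~\ref{thm_kernel_condition} by extending the hypothetical truncated kernels by zero. Suppose, for contradiction, that there are non-zero functions $\widetilde \Vone\in L^1_\text{loc}(\widetilde \raygroundset_1)$ and $\widetilde \Vtwo\in L^1_\text{loc}(\widetilde \raygroundset_2)$ satisfying \eqref{equ_truncated_PPRC}. I would define $\Vvariable\colon \raygroundset_{\projvariable}\to\RR$ by $\Vvariable=\widetilde \Vvariable$ on $\widetilde \raygroundset_{\projvariable}$ and $\Vvariable=0$ on $\raygroundset_{\projvariable}\setminus\widetilde \raygroundset_{\projvariable}$, for $\projvariable\in\{1,2\}$. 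The first thing to record is that $\Vvariable\in L^1_\text{loc}(\raygroundset_{\projvariable})$ (the zero-extension of a locally integrable function is locally integrable) and that $\Vvariable$ is not almost everywhere zero, since $\widetilde \Vvariable\not\equiv 0$ on the positive-measure set $\widetilde \raygroundset_{\projvariable}$.

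Next I would observe that, because $[\widetilde \Projind_{\projvariable} f]=[\Projind_{\projvariable} f]\big|_{\widetilde \raygroundset_{\projvariable}}$ and $\Vvariable$ vanishes outside $\widetilde\raygroundset_{\projvariable}$, one has $\int_{\raygroundset_{\projvariable}}[\Projind_{\projvariable} f]\,\Vvariable = \int_{\widetilde\raygroundset_{\projvariable}}[\widetilde\Projind_{\projvariable} f]\,\widetilde\Vvariable$ for each $\projvariable$ and each $f\in\mathcal{C}^\infty_c(\imgdom)$; all these integrals are finite because $\Projind_{\projvariable} f$ is bounded with compact support (Lemma~\ref{lemma_basic_continuity}) and $\Vvariable\in L^1_\text{loc}(\raygroundset_{\projvariable})$. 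Hence \eqref{equ_truncated_PPRC} says exactly that $(\Vone,\Vtwo)$ satisfies the defining identity \eqref{equ_general_zero_order_consistency}, and since it is not a.e.\ zero, $(\Vone,\Vtwo)$ is a pair of projection kernels of a \conditionname{} for $\Projind_1$ and $\Projind_2$ in the sense of Definition~\ref{def_LPPRC_kernel}. Now I apply Theorem~\ref{thm_kernel_condition} -- legitimate because the full pair $\Proj=(\Projind_1,\Projind_2)$ is assumed to satisfy Assumption~\ref{Assumption_curve_independent} -- which forces $\Vone$ to be almost everywhere non-zero on all of $\raygroundset_1$. This contradicts $\Vone\equiv 0$ on $\raygroundset_1\setminus\widetilde\raygroundset_1$, a set of positive measure by hypothesis, and the corollary follows. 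Note that only the hypothesis on $\raygroundset_1\setminus\widetilde\raygroundset_1$ is used; the contradiction on the first component alone is enough.

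\textbf{Main obstacle.} The only delicate point is the very first one: verifying that the zero-extension of an $L^1_\text{loc}(\widetilde\raygroundset_{\projvariable})$ function is $L^1_\text{loc}(\raygroundset_{\projvariable})$ when $\widetilde\raygroundset_{\projvariable}$ is merely a measurable subset. If ``locally integrable'' is read via compact subsets of $\widetilde\raygroundset_{\projvariable}$ (which need not be closed in $\raygroundset_{\projvariable}$), the extension could in principle fail to be locally integrable near $\partial\raygroundset_{\projvariable}$ or near an accumulation point. I would handle this either by adding the (practically relevant) mild hypothesis that the $\widetilde\raygroundset_{\projvariable}$ are relatively open -- in which case the extension is manifestly locally integrable -- or, in full generality, by first shrinking $\widetilde\raygroundset_{\projvariable}$ to a relatively open positive-measure subset before running the argument: this is possible since $\raygroundset_{\projvariable}$ is an open interval (Notation~\ref{Notation_basic_introduction}), and one uses the Lebesgue density theorem to keep $\lvert\raygroundset_1\setminus\widetilde\raygroundset_1\rvert>0$. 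Everything past that reduction is the bookkeeping above, so the corollary is essentially an immediate consequence of the uniqueness/nonvanishing half of Theorem~\ref{thm_kernel_condition}.
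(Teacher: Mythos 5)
Your proposal is correct and follows essentially the same route as the paper: extend $\widetilde\Vone,\widetilde\Vtwo$ by zero to $\raygroundset_1,\raygroundset_2$, observe that the extensions satisfy \eqref{equ_general_zero_order_consistency} and are not a.e.\ zero, and then invoke the conclusion of Theorem~\ref{thm_kernel_condition} that projection kernels must be almost everywhere non-zero to contradict the vanishing of $\Vone$ on the positive-measure set $\raygroundset_1\setminus\widetilde\raygroundset_1$. The measure-theoretic caveat you raise about zero-extensions is not addressed in the paper (which implicitly reads $L^1_\text{loc}(\widetilde\raygroundset_{\projvariable})$ as integrability over $K\cap\widetilde\raygroundset_{\projvariable}$ for compact $K\subset\raygroundset_{\projvariable}$, under which the extension is automatic and indeed \eqref{equ_truncated_PPRC} is well-posed); just note that your alternative fix of shrinking $\widetilde\raygroundset_{\projvariable}$ would alter the integration domain in \eqref{equ_truncated_PPRC} and so does not literally preserve the hypothesis, whereas your first fix (relative openness) is unproblematic.
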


Note that \eqref{equ_truncated_PPRC} is an analog to the equation \eqref{equ_general_zero_order_consistency}, defining functions akin to projection kernels for the truncated setting. 

When many projections are taken (and not only two), it is possible that even though each projection is truncated, the collection of gathered data contains the full data of some untruncated projections (of some projection operator), which might yield necessary range conditions. For example, \cite{https://doi.org/10.1118/1.4905161} describes that truncated fanbeam projections with the source on a half circle contain the full data of fanbeam projections along a line segment inside the circle, which in turn yield necessary range conditions (involving data from all the original projections).

\begin{proof}
Let us assume the existence of non-zero functions $\widetilde \Vone$ and $\widetilde \Vtwo$ as described in Corollary \ref{cor_truncated_no_kernels}.
We set $\Vone(\rayvariable_1)= \widetilde \Vone (\rayvariable_1)$ for all $\rayvariable_1\in \widetilde\raygroundset_1$ and $\Vone(\rayvariable_1)=0$ for $\rayvariable_1 \in \raygroundset_1\setminus \widetilde \raygroundset_1$, and analogously for $\Vtwo$. Since $\Projind_{\projvariable}$ coincides with $\widetilde \Projind_{\projvariable}$ on $\raygroundset_{\projvariable}$, \eqref{equ_truncated_PPRC} implies that $\Vone$ and $\Vtwo$ satisfy \eqref{equ_general_zero_order_consistency}. Thus, they are either a pair of projection kernels as in Definition \ref{def_LPPRC_kernel}, or both functions are constantly zero. However, since $\Vone=0$ on the set $\raygroundset_1\setminus \widetilde \raygroundset_1$ with non-zero measure, they are not projection kernels due to Theorem \ref{thm_kernel_condition}. The only remaining possibility is that $\Vone$ and $\Vtwo$ are constantly zero, implying that $\widetilde \Vone$ and $\widetilde \Vtwo$ are zero, contradicting the assumption that they are non-zero.
\end{proof}

When no projection kernels exist, any given $g\in L^2(\raygroundset_1)\times L^2(\raygroundset_2)$ can be approximated arbitrarily well by projections of smooth functions, as we describe next.

\begin{theorem}\label{thm_dense_range_general}
Let $\Proj=(\Projind_1,\Projind_2)$ be a projection pair operator for which no projection kernels $(\Vone,\Vtwo)$ exist. Let $g=(g_1,g_2)\in L^2(\raygroundset_1)\times L^2(\raygroundset_2)$ and fix $\delta>0$. Then, there is an $f\in \mathcal{C}^\infty_c(\imgdom)$ such that $\|g-\Proj f\|_{L^2(\raygroundset_1)\times L^2(\raygroundset_2)} \leq \delta$, where $\|g\|_{L^2(\raygroundset_1)\times L^2(\raygroundset_2)}^2=\|g_1\|^2_{L^2(\raygroundset_1)}+\|g_2\|^2_{L^2(\raygroundset_2)}$. Equivalently, then  $\rg{\Proj}$ is dense in $L^2(\raygroundset_1)\times L^2(\raygroundset_2)$.

\end{theorem}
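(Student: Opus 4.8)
The plan is to reduce the density assertion to the standard Hilbert-space fact that a linear subspace is dense exactly when its orthogonal complement is trivial, and then to recognize the resulting orthogonality relation as precisely Equation \eqref{equ_general_zero_order_consistency} from Definition \ref{def_LPPRC_kernel}. Put $H := L^2(\raygroundset_1)\times L^2(\raygroundset_2)$ with the inner product $\langle(g_1,g_2),(h_1,h_2)\rangle_H:=\int_{\raygroundset_1}g_1h_1\dd{\rayvariable_1}+\int_{\raygroundset_2}g_2h_2\dd{\rayvariable_2}$, so that $\|\cdot\|_H=\|\cdot\|_{L^2(\raygroundset_1)\times L^2(\raygroundset_2)}$. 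Because $\raygroundset_1,\raygroundset_2$ are bounded intervals (Notation \ref{Notation_basic_introduction}) and $\Projind_{\projvariable}f\in L^\infty_c(\raygroundset_{\projvariable})$ by Lemma \ref{lemma_basic_continuity}, we have $\rg{\Proj}\subseteq H$, and \eqref{equ_basic_continuity} shows $\Proj$ is a bounded linear map into $H$. Since $\rg{\Proj}$ is a linear subspace of the Hilbert space $H$, it is dense in $H$ if and only if $\rg{\Proj}^{\perp}=\{0\}$; proving this equality gives the theorem, the $\delta$-approximation being just a restatement of density.

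For the core step I would take an arbitrary $(\Vone,\Vtwo)\in\rg{\Proj}^{\perp}$ and unravel orthogonality: the condition $\langle\Proj f,(\Vone,\Vtwo)\rangle_H=0$ for every $f\in\mathcal{C}^\infty_c(\imgdom)$ reads
\[
\int_{\raygroundset_1}[\Projind_1 f](\rayvariable_1)\Vone(\rayvariable_1)\dd{\rayvariable_1}+\int_{\raygroundset_2}[\Projind_2 f](\rayvariable_2)\Vtwo(\rayvariable_2)\dd{\rayvariable_2}=0\qquad\text{for all }f\in\mathcal{C}^\infty_c(\imgdom),
\]
equivalently $\int_{\raygroundset_1}[\Projind_1 f]\Vone=\int_{\raygroundset_2}[\Projind_2 f](-\Vtwo)$ for all such $f$. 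Since $\raygroundset_{\projvariable}$ is bounded, Cauchy--Schwarz yields $L^2(\raygroundset_{\projvariable})\subseteq L^1(\raygroundset_{\projvariable})\subseteq L^1_\text{loc}(\raygroundset_{\projvariable})$, hence $(\Vone,-\Vtwo)\in L^1_\text{loc}(\raygroundset_1)\times L^1_\text{loc}(\raygroundset_2)$ and this pair satisfies exactly \eqref{equ_general_zero_order_consistency}. By Definition \ref{def_LPPRC_kernel}, $(\Vone,-\Vtwo)$ is then either a pair of projection kernels for $\Proj$ or else vanishes identically almost everywhere; the hypothesis that no projection kernels exist rules out the first alternative, so $\Vone=\Vtwo=0$ a.e., i.e. $(\Vone,\Vtwo)=0$ in $H$. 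Therefore $\rg{\Proj}^{\perp}=\{0\}$, $\rg{\Proj}$ is dense in $H$, and for given $g\in H$ and $\delta>0$ one may choose $f\in\mathcal{C}^\infty_c(\imgdom)$ with $\|g-\Proj f\|_H\le\delta$.

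I do not anticipate a real obstacle here: this is a short functional-analytic argument, and the only matters requiring care are bookkeeping ones. Namely, one must track the sign -- it is $(\Vone,-\Vtwo)$, not $(\Vone,\Vtwo)$, that serves as the candidate projection-kernel pair, which is harmless because \eqref{equ_general_zero_order_consistency} is linear in the second slot -- and one must verify the inclusion $L^2\subseteq L^1_\text{loc}$ on the bounded sets $\raygroundset_{\projvariable}$ so that the candidate genuinely lives in the space in which projection kernels are defined. It is also worth observing that Definition \ref{def_LPPRC_kernel} incorporates non-vanishing of the pair into the notion of a projection kernel, so \emph{``no projection kernels exist''} means precisely that the only pair in $L^1_\text{loc}(\raygroundset_1)\times L^1_\text{loc}(\raygroundset_2)$ satisfying \eqref{equ_general_zero_order_consistency} is $(0,0)$ -- which is exactly the implication the argument invokes.
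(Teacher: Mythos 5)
Your proposal is correct and follows essentially the same route as the paper: identify $L^2(\raygroundset_1)\times L^2(\raygroundset_2)$ as a Hilbert space, show that any nonzero element of $\rg{\Proj}^{\perp}$ would (after flipping the sign of the second component) satisfy \eqref{equ_general_zero_order_consistency} and hence be a pair of projection kernels, and conclude density from triviality of the orthogonal complement. Your additional bookkeeping (the inclusion $L^2(\raygroundset_{\projvariable})\subseteq L^1_\text{loc}(\raygroundset_{\projvariable})$ on the bounded sets $\raygroundset_{\projvariable}$, and the sign on $\Vtwo$) is consistent with, and slightly more explicit than, the paper's argument.
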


\begin{proof}
The space $L^2(\raygroundset_1)\times L^2(\raygroundset_2)$ is a Hilbert space induced by the inner product $\langle g,\tilde g\rangle = \int_{\raygroundset_1} g_1\tilde g_1 \dd {\rayvariable_1}+\int_{\raygroundset_2} g_2\tilde g_2 \dd {\rayvariable_2}$. If a vector $W=(W_1,W_2)\in \rg{\Proj}^\perp\setminus \{0\}$ (the orthogonal complement of the range) exists, then $\Vone=W_1$ and $\Vtwo=-W_2$ are projection kernels (as $\langle W,g \rangle =0$ for all $g\in \rg{\Proj}$ implies that $(\Vone,\Vtwo)$ satisfy \eqref{equ_general_zero_order_consistency}). Since no projection kernels exist, also such a $W$ cannot exist, forcing $\rg{\Proj}^\perp=\{0\}$, i.e., $\rg{\Proj}$ has trivial orthogonal complement. It is well known that subspaces with trivial orthogonal complements are dense in Hilbert spaces; i.e., $\rg{\Proj}$ is dense in $L^2(\raygroundset_1)\times L^2(\raygroundset_2)$.
\end{proof}

 In Section \ref{example_surjectivity}, we show a numerical experiment illustrating this density for the exponential fanbeam transform.

\section{Known Range Conditions as PPRCs}
\label{section_recover_known_conditions}

In this section, we illustrate the use of kernel conditions by finding kernels for a mixed parallel-fanbeam projection pair.  These considerations also serve as a stepping stone for the discussion of range conditions for exponential fanbeam projections in Section \ref{section_no_condition_for_exponential_fanbeam}.

\subsection{Classical projections as single projection operators}
\label{section_known_conditions_summary}
\begin{notation}
We denote the unit circle in $\RR^2$ by $S^1$, and $\vartheta_\theta\in S^1$ denotes the direction vector corresponding to an angle $\theta$, i.e., $\vartheta_{\theta}:=(\cos\theta,\sin\theta)^T\in S^1$.
Moreover, given $\vartheta\in S^1$, the vector $\vartheta^{\rotated}\in S^1$ denotes the (counterclockwise) rotation of $\vartheta$ by $90$ degrees; in particular, $\vartheta^{\rotated}_\theta:=(-\sin\theta,\cos\theta)^T\in S^1$. Similarly, for $z\in \RR^2$ with $z = \|z\|\vartheta$ for some $\vartheta\in S^1$, $z^{\rotated}:=\|z\| \vartheta^{\rotated}$ is the rotation of $z$ by 90 degrees (counterclockwise).
\end{notation}
\begin{definition}
Given an angle $\theta\in [0,2\pi)$, we define the (conventional) parallel beam projection $\Radon_\theta$ (in direction $\theta$)  of a function $f\in \mathcal{C}^\infty_c(\RR^2)$ according to
\begin{align} 
[\Radon_\theta f ](\rayvariable) &:=\int_{\RR} f(\rayvariable \vartheta_{\theta}+\tvariable \vartheta^{\rotated}_{\theta}) \dd \tvariable.
\end{align}
Here, $\theta$ reflects the detector orientation's angle and $\rayvariable\in \RR$ the detector offset.
The (conventional) fanbeam projection $\Fanbeam_\lambda$ for a $\lambda\in \RR^2$ of $f\in \mathcal{C}^\infty_c(\RR^2)$ is defined according to
\begin{align}
[\Fanbeam_\lambda f](\rayvariable)&:=\int_{\RR^+} f(\lambda +\tvariable \vartheta_{\rayvariable}) \dd \tvariable.
\end{align}
Here, $\lambda\in \RR^2$ denotes the source position and the angle $\rayvariable$ reflects the direction in which a ray is emitted from the source.
\end{definition}


\begin{figure}
\newcommand{\mylinewidth}{ultra thick}
\newcommand{\myfontsize}{\huge}
\begin{tikzpicture}[scale=1.5]
\pgfmathsetmacro{\myangle}{-100};
\pgfmathtruncatemacro\myN{4}

\pgfmathsetmacro{\radius}{0.7};
\pgfmathsetmacro{\x}{-1};
\pgfmathsetmacro{\y}{1.5};

\draw[->] (0,0) -- (0,2) node [above]{y};
\draw[->] (0,0) -- (2,0) node [right]{x};

\clip(-1,-0.5) rectangle (2.4,2);
 \node[blue] at (0.4,0.25)  {\myfontsize $\theta$ };

\foreach \rayvariable in {-\myN,...,\myN}
{
\draw [dashed, rotate around={\myangle:(0,0)}] (\rayvariable/\myN*2,-2) -- (\rayvariable/\myN*2,2);
}

\draw [red,rotate around={\myangle:(0,0)},fill] (\x,\y) circle (0.05cm) node[xshift=0.4cm] {\myfontsize $x$};

\draw [\mylinewidth, orange,rotate around={\myangle:(0,0)}] (-1,0) -- (-1,\y);
\draw [\mylinewidth, teal,rotate around={\myangle:(0,0)}] (0,0) -- (\x,0);

\draw[->,\mylinewidth,blue] (\radius,0) arc (0:180+\myangle:\radius);

\draw [orange,\mylinewidth,decorate,decoration={brace,amplitude=10pt}, rotate around={\myangle:(0,0)}] (-1,0) -- (-1,\y) node [midway,xshift=0.1cm,yshift=0.7cm] 
{ \myfontsize $\tvariable$};

\draw [teal,\mylinewidth,decorate,decoration={brace,amplitude=10pt}, rotate around={\myangle:(0,0)}] (0,0) -- (\x,0) node [midway,xshift=-0.6cm,yshift=0cm] 
{\myfontsize $\rayvariable$};
\end{tikzpicture} \hfill
\begin{tikzpicture}[scale=1.5]
\pgfmathtruncatemacro\myN{5}
\pgfmathtruncatemacro\myNsmall{\myN-1}
\pgfmathsetmacro{\sx}{-0.4};
\pgfmathsetmacro{\sy}{0.5};
\pgfmathsetmacro{\specialangle}{36};
\pgfmathsetmacro{\myt}{2};

\draw[->] (0,0) -- (0,2) node [above]{y};
\draw[->] (0,0) -- (2,0) node [right]{x};

\clip(\sx-0.5,-0.5) rectangle (2.4,2);

\foreach \rayvariable in {-\myNsmall,...,\myNsmall}
{

\pgfmathsetmacro{\myangle}{180*\rayvariable/\myN/2};
\draw [dashed, rotate around={\myangle:(\sx,\sy)}] (\sx,\sy) -- (\sx+3,\sy);
}

\draw [blue,fill] (\sx,\sy) circle (0.05cm) node[xshift=-0.4cm] {\myfontsize $\lambda$};
\draw [red,fill,rotate around={\specialangle:(\sx,\sy)}] (\sx+\myt,\sy) circle (0.05cm) node[xshift=0.5cm] {\myfontsize $x$};
\draw [orange,\mylinewidth,rotate around={\specialangle:(\sx,\sy)}] (\sx,\sy)--(\sx+\myt,\sy);
\draw [orange,\mylinewidth,decorate,decoration={brace,amplitude=10pt}, rotate around={\specialangle:(\sx,\sy)}] (\sx,\sy)--(\sx+\myt,\sy) node [midway,xshift=-0.4cm,yshift=0.4cm] 
{ \myfontsize $\tvariable$};

\newcommand{\myradius}{0.8}
\draw[\mylinewidth,teal] (\sx,\sy)-- (\sx+\myradius,\sy) ;
\draw[->,\mylinewidth,teal] (\sx+\myradius,\sy) arc (0:\specialangle:\myradius);
 \node[teal] at (0.2,0.7)  {\myfontsize $\rayvariable$ };
\end{tikzpicture}
\caption{Illustration of the maps $\gamma_{\theta}^\text{par}$ (see \eqref{equ_definition_curve_parallel}) on the left and $\gamma_{\lambda}^\text{fan}$ (see \eqref{equ_definition_curve_fanbeam}) on the right related to the parallel beam and fanbeam projection curves, respectively. The dashed lines represent the curves induced by various $\rayvariable \in \raygroundset$ (on the left, each $\rayvariable\in \raygroundset$ represents a shift of the ray, while on the right, each $\rayvariable$ is a different angle for the ray) and fixed $\theta$ or $\lambda$, while the orange $\tvariable$ and teal $\rayvariable$ are the coordinates associated with the $x=\gamma(\rayvariable,\tvariable)$ represented by the red dot.}
\label{Figure_illustration_parallel_fanbeam}
\end{figure}
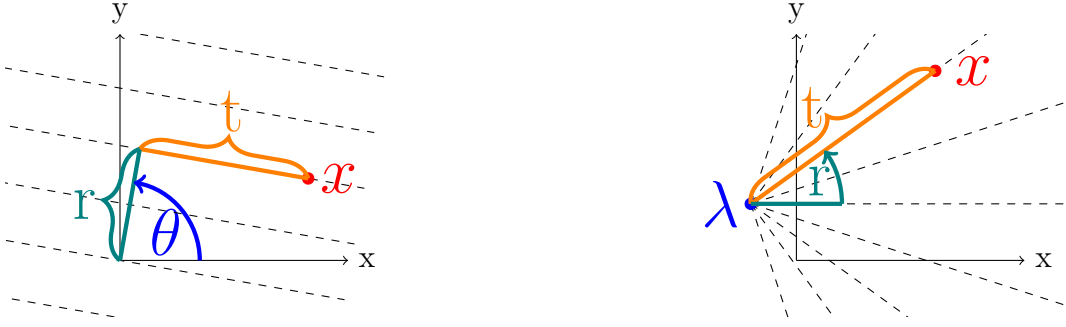

\subsubsection*{Single parallel projection}
We first discuss how $\Radon_\theta$ (for fixed $\theta$) fits the single projection operator setting of this paper.
$\Radon_\theta$ is a line integral operator over straight parallel curves induced by the rotation map
\begin{equation}\label{equ_definition_curve_parallel}
\gamma^\textrm{par}_{\theta}(\rayvariable,\tvariable):= \rayvariable\vartheta_{\theta}+\tvariable \vartheta^{\rotated}_{\theta}
\end{equation}
   weighted by  $\weightfkt(\rayvariable,\tvariable)=\weightfkt(\tvariable)=1$; see Figure \ref{Figure_illustration_parallel_fanbeam}. 
Note that $\gamma_\theta^\text{par}\colon \RR^2\to \RR^2$ is a $\mathcal{C}^\infty$-diffeomorphism.
The inverse function $(\rayinverse_{\theta}^{\text{par}},\tinverse_{\theta}^{\text{par}}):=(\gamma_{\theta}^\text{par})^{-1}$ grants us a unique representation of each point $x\in \RR^2$ via  
\begin{equation}
\label{equ_definition_parametrization_inverse_parallel}
\tinverse_{\theta}^{\text{par}}(x)=x \cdot \vartheta^{\rotated}_{\theta} \qquad \text{ and }\qquad \rayinverse_{\theta}^{\text{par}}(x)=x\cdot \vartheta_{\theta}.
\end{equation}

Given $\imgdom \subset \RR^2$ open, connected  and bounded,
we set $\gammagroundset_{\theta}=[\gamma_{\theta}^\text{par}]^{-1}(\imgdom)$, resulting in  $\raygroundset_{\theta}=\{x\cdot \vartheta_{\theta}\ \big | \ x\in \imgdom\}$ and $\tgroundset_{\theta}(\rayvariable)= \{x \cdot \vartheta^{\rotated}_{\theta}\ \big |\ x\in \imgdom,\ x\cdot \vartheta_{\theta}=\rayvariable\}$.
This $\gammagroundset_{\theta}$ is an open, bounded and connected set, such that the restriction $\gamma_{\theta} := [\gamma_{\theta}^\text{par}] _{\big | \gammagroundset_{\theta}}\colon \gammagroundset_{\theta} \to \imgdom$ is a $\mathcal{C}^1$-diffeomorphism with inverse parametrizations $\tinverse_{\theta}=[\tinverse_{\theta}^\text{par}]_{\big| \imgdom }$ and $\rayinverse_{\theta}=[\rayinverse_{\theta}^\text{par}]_{\big| \imgdom }$ and Jacobi determinant $\left|\det\left(\frac{\dd \gamma_{\theta}^{-1}}{\dd x} \right)\right|=1$.
Thus, $\Radon_\theta $ is a single projection operator (Definition \ref{def_projection_curves}) with corresponding diffeomorphism $\gamma_\theta$ and $\weightfkt_\theta(\rayvariable,\tvariable)=1$ that satisfies the setting of this paper.

\subsubsection*{Single fanbeam projection}
Next, we show how $\Fanbeam_\lambda $ for fixed $\lambda\in \RR^2$ fits into the single projection operator setting of this paper.
To that end, we consider the polar representation centered in $\lambda\in \RR^2$ according to
 \begin{equation}\label{equ_definition_curve_fanbeam}
\gamma^\textrm{fan}_{\lambda}(\rayvariable,\tvariable):= \lambda+\tvariable \vartheta_{\rayvariable} 
 \end{equation}
mapping from $[\theta_0,\theta_0+2\pi) \times \RR^+$ onto $\RR^2\setminus \{\lambda\}$, where $\theta_0\in \RR$ fixed (it will be convenient to shift the standard interval $[0,2\pi)$).  
We see via a basic calculation that the inverse functions $(\rayinverse^\text{fan}_{\lambda},\tinverse^\text{fan}_{\lambda})= (\gamma_{\lambda}^{\text{fan}})^{-1}$ are
\begin{equation} \label{equ_fanbeam_basic_geometry_parametrization}
\rayinverse_{\lambda}^{\text{fan}}(x)= \text{arg}_{\theta_0}(x-\lambda) \qquad \text{and} \qquad \tinverse_{\lambda}^{\text{fan}}(x)=\|x-\lambda\| \qquad \text{for }x\in \RR^2 \setminus \{\lambda\},
\end{equation}
where $\arg_{\theta_0}(z) \in [\theta_0,\theta_0+2\pi)$ is the angle in $[\theta_0,\theta_0+2\pi)$ associated with the vector $z\in \RR^2\setminus\{0\}$, i.e., $\theta=\arg_{\theta_0}(z)$ if and only if $z=\|z\|\vartheta_\theta$. Note that the restriction $\gamma^\text{fan}_\lambda\colon (\theta_0,\theta_0+2\pi)\times \RR^+ \to M_{\theta_0}(\lambda):=\{x \in \RR^2\setminus \{\lambda\} \ \big| \ \text{arg}_{\theta_0}(x-\lambda)\neq \theta_0\}$ is a $\mathcal{C}^1$-diffeomorphism. (We used $\theta_0$ to avoid the discontinuity of the $\text{arg}$ function.)

We consider an  open, connected and bounded $\imgdom$ with $\overline \imgdom \subset M_{\theta_0}(\lambda)$. 
Then, we set 
\[
\raygroundset_{\lambda}=\{\rayinverse^{\text{fan}}_{\lambda}(x) \ \big| \ x \in \imgdom\}, \qquad\tgroundset_{\lambda}(\rayvariable)= \{ \tinverse^\text{fan}_{\lambda}(x) \ \big|\ x\in \imgdom, \ \rayinverse^\text{fan}_{\lambda}(x)=\rayvariable\}\] and $\gammagroundset_{\lambda} = [\gamma_{\lambda}^\text{fan}]^{-1}(\imgdom)= \{ (\rayvariable,\tvariable) \in \RR^2 \big| \rayvariable \in \raygroundset_{\lambda},\ \tvariable\in \tgroundset_{\lambda}(\rayvariable)\}$. In particular, $ \gamma_{\lambda} = [\gamma_{\lambda}^\text{fan}]_{\big| \gammagroundset_{\lambda}} \colon \gammagroundset_{\lambda} \to \imgdom$ is a $\mathcal{C}^1$-diffeomorphism between open, bounded and connected sets with inverse functions $\tinverse_\lambda=[\tinverse_\lambda^\text{fan}]_{|\imgdom}$ and $\rayinverse_\lambda=[\rayinverse_\lambda^\text{fan}]_{|\imgdom}$.
The Jacobi determinant is uniformly bounded and bounded away from zero since 
\begin{equation} \label{equ_fanbeam_determinante_general}
\left|\det \left( \frac{\dd {\gamma_{\lambda} }}{\dd {(\rayvariable, \tvariable)}}(x)\right) \right|= \tinverse_{\lambda}(x)=\|x-\lambda\|
\end{equation}
and $\imgdom$ is bounded and has a positive distance to $\lambda$.
Hence, we can understand $\Fanbeam_\lambda $ as a single projection operator in accordance with the setting of this paper.

\subsection{PPRCs for mixed parallel-fanbeam projection geometries}
\label{section_appendix_different_projection_geometries}

In the projection settings described above, one can derive known data consistency conditions for two parallel beam projections \cite{WOS:A1960WE64800004} or two fanbeam projections \cite{doi:10.1080/01630568308816147}, respectively, by solving the kernel condition \eqref{equ_theorem_kernel_conditions_2}.
However, the theory is broad enough to handle two different underlying geometries.  To illustrate this, we compare a parallel beam projection with a fanbeam projection in this section.

\subsubsection*{Parallel-fanbeam setting}

Given $\imgdom\subset \RR^2$ open, connected and bounded, fixed $\theta\in [0,2\pi)$ and $\lambda\in \RR^2\setminus \overline{\imgdom}$, we set $\gammagroundset_1=(\gamma_{\theta}^\text{par})^{-1}(\imgdom)$ with $\gamma_{\theta}^\text{par}$ as in \eqref{equ_definition_curve_parallel} and consider the $\mathcal{C}^1$-diffeomorphism $\gamma_1:=[\gamma_{\theta}^\text{par}]_{\big| \gammagroundset_1}\colon \gammagroundset_1 \to \imgdom$. Moreover, we assume that $\overline \imgdom\subset M_{\theta_0}(\lambda)$ for some given $\theta_0$ (as in \eqref{equ_fanbeam_basic_geometry_parametrization}). Setting $\gammagroundset_2=(\gamma_{\lambda}^\text{fan})^{-1}(\imgdom)$ with $\gamma_{\lambda}^\text{fan}$ as in \eqref{equ_definition_curve_fanbeam}, we consider  
the $\mathcal{C}^1$-diffeomorphism $\gamma_2:=[\gamma_{\lambda}^\text{fan}]_{\big|\gammagroundset_2}\colon \gammagroundset_2 \to \imgdom$. 
This way, $\Radon_\theta $ and $\Fanbeam_\lambda $ are single projection operators as discussed above, and we consider the corresponding projection pair operator $\Proj^{\theta,\lambda}=(\Radon_\theta,\Fanbeam_\lambda)$.

\subsubsection*{Assumption \ref{Assumption_curve_independent} for $\Proj^{\theta,\lambda}$}
We assume $(\theta - \overline \raygroundset_2) \not \in \frac{\pi}{2}+  \pi \ZZ$, where $\overline \raygroundset_2$ denotes the closure of $\raygroundset_2$ (the angles of beams emitted from $\lambda$ hitting $\imgdom$) and $\pi\ZZ $ are multiples of $\pi$.
Visually, the line in the fanbeam setting with the same direction as the parallel projection does not cut through $\overline \Omega$ (or the parallel beams through $\imgdom$ do not hit $\lambda$); see Figure \ref{fig_mixed_proj}. 
This guarantees that no two straight lines coincide between the two projections, thereby ensuring Assumption \ref{Assumption_curve_independent} (see Remark \ref{Remark_Assumption_violated_Fanbeam}).
This can be interpreted as a requirement on the position of the source $\lambda$, the parallel projection direction $\theta$, or the spatial domain $\imgdom$.

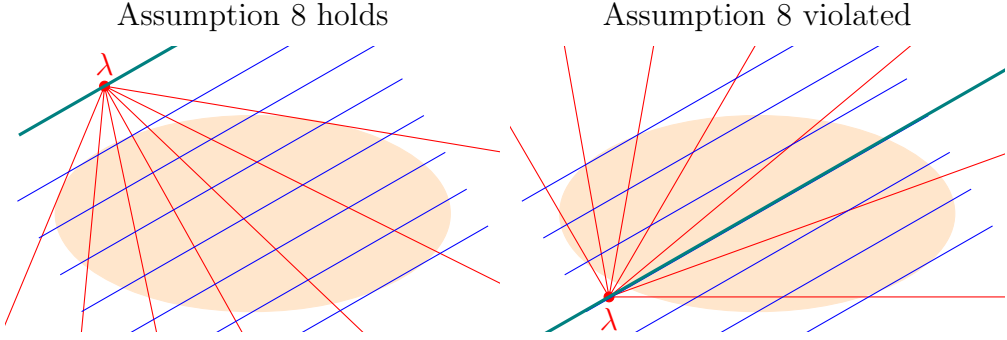
\begin{figure}
\center
\begin{tikzpicture}[scale=1.3]

\pgfmathtruncatemacro\myN{4}
\pgfmathtruncatemacro\myNsmall{\myN-1}
\pgfmathsetmacro{\lambx}{-1.5};
\pgfmathsetmacro{\lamby}{1.3};
\pgfmathsetmacro{\specialangle}{30};
\pgfmathsetmacro{\factor}{0.65};
\pgfmathsetmacro{\anglelower}{165};
\pgfmathsetmacro{\anglehigher}{268};

\draw[](0,2) node[]{Assumption \ref{Assumption_curve_independent} holds};

\draw[fill,orange!20] (0,0) ellipse (2cm and 1cm);
\clip (-2.5,-1.2) rectangle (2.5,1.7);

\draw[fill,red] (-1.5,1.3) circle (0.05 cm) node[above]{$\lambda$};

\foreach \rayvariable in {-\myNsmall,...,\myNsmall}
{

\pgfmathsetmacro{\angle}{\anglelower+(\anglehigher-\anglelower)*\rayvariable/\myNsmall/2+\anglehigher/2};
\pgfmathsetmacro{\directionx}{cos(\angle)};
\pgfmathsetmacro{\directiony}{sin(\angle)};

\draw[red] (\lambx,\lamby) -- (\lambx+10*\directionx,\lamby+10*\directiony);

}

\foreach \rayvariable in {-\myNsmall,...,\myNsmall}
{
\draw[blue, rotate=\specialangle] (-2,\factor*2*\rayvariable/\myNsmall) -- (2,\factor*2*\rayvariable/\myNsmall);
}

\pgfmathsetmacro{\directionx}{cos(\specialangle)};
\pgfmathsetmacro{\directiony}{sin(\specialangle)};
\draw[teal, very thick] (\lambx-\directionx,\lamby-\directiony) -- (\lambx+\directionx,\lamby+\directiony); 
\end{tikzpicture}
\begin{tikzpicture}[scale=1.3]

\pgfmathtruncatemacro\myN{4}
\pgfmathtruncatemacro\myNsmall{\myN-1}
\pgfmathsetmacro{\lambx}{-1.5};
\pgfmathsetmacro{\lamby}{-0.85};
\pgfmathsetmacro{\specialangle}{30};
\pgfmathsetmacro{\factor}{0.65};
\pgfmathsetmacro{\anglelower}{0};
\pgfmathsetmacro{\anglehigher}{120};

\draw[](0,2) node[]{Assumption \ref{Assumption_curve_independent} violated};

\draw[fill,orange!20] (0,0) ellipse (2cm and 1cm);
\clip (-2.5,-1.2) rectangle (2.5,1.7);

\draw[fill,red] (\lambx,\lamby) circle (0.05 cm) node[below]{$\lambda$};

\foreach \rayvariable in {-\myNsmall,...,\myNsmall}
{

\pgfmathsetmacro{\angle}{\anglelower+(\anglehigher-\anglelower)*\rayvariable/\myNsmall/2+\anglehigher/2};
\pgfmathsetmacro{\directionx}{cos(\angle)};
\pgfmathsetmacro{\directiony}{sin(\angle)};

\draw[red] (\lambx,\lamby) -- (\lambx+10*\directionx,\lamby+10*\directiony);

}

\foreach \rayvariable in {-\myNsmall,...,\myNsmall}
{
\draw[blue, rotate=\specialangle] (-2,\factor*2*\rayvariable/\myNsmall) -- (2,\factor*2*\rayvariable/\myNsmall);
}

\pgfmathsetmacro{\directionx}{cos(\specialangle)};
\pgfmathsetmacro{\directiony}{sin(\specialangle)};
\draw[teal, very thick] (\lambx-10*\directionx,\lamby-10*\directiony) -- (\lambx+10*\directionx,\lamby+10*\directiony); 
\end{tikzpicture}
\caption{Illustration of the assumption on the geometry in Section \ref{section_appendix_different_projection_geometries}, with blue parallel lines with angle $\theta$, and red fanbeam lines originating in $\lambda$. On the left, the assumption is satisfied, while on the right, it is violated as the teal line emitted from $\lambda$ coincides with a parallel line, or (equivalently) the parallel lines passing through $\imgdom$ do hit $\lambda$.
}
\label{fig_mixed_proj}
\end{figure}

This assumption is equivalent to the existence of some $\delta>0$ with 
\begin{equation}\label{equ_mixed_projectoin_Omega}
|\vartheta_\theta \cdot \vartheta_{\rayinverse_2(x)}|>\delta \text{ for all }x \in \imgdom \qquad \text{and} \qquad |\vartheta_\theta \cdot \vartheta_{\rayvariable_2}| > \delta \text{ for all }\rayvariable_2 \in \raygroundset_2.
\end{equation}
In turn, this is equivalent to the existence of a constant $\delta>0$ such that
\begin{equation} \label{equ_mixed_projectoin_Omega_2}
|\rayvariable_1-\lambda\cdot \vartheta_\theta|>\delta \qquad \text{for all }\rayvariable_1 \in \raygroundset_1.
\end{equation}

 Given $x\in \imgdom$, there is a representation
\begin{equation}
\label{equ_connection_radon_fanbeam1}
x=\rayinverse_1(x)\vartheta_{\theta}+\tinverse_1(x) \vartheta^{\rotated}_{\theta} = \lambda+ \tinverse_2(x) \vartheta_{\rayinverse_2(x)}.
\end{equation}
We multiply \eqref{equ_connection_radon_fanbeam1} by $\vartheta_{\theta}$ to see that
\begin{equation}
\label{equ_connection_radon_fanbeam2}
\tinverse_2(x)= \frac{\rayinverse_1(x)-\lambda\cdot \vartheta_{\theta}}{\vartheta_{\theta}\cdot \vartheta_{\rayinverse_2(x)}}.
\end{equation}
Note that the denominator is bounded away from zero due to assumption (see \eqref{equ_mixed_projectoin_Omega}).  Combining \eqref{equ_connection_radon_fanbeam1} with \eqref{equ_connection_radon_fanbeam2}, we see that 
 \begin{equation}
 X(\rayvariable_1,\rayvariable_2)=\lambda+ \frac{\rayvariable_1-\lambda\cdot \vartheta_{\theta}}{\vartheta_{\theta}\cdot \vartheta_{\rayvariable_2}} \vartheta_{\rayvariable_2}
\end{equation}
is a $\mathcal{C}^1$-diffeomorphism, i.e., Assumption \ref{Assumption_curve_independent} holds. 

\subsubsection*{Kernel conditions for parallel-fanbeam projections}
The conditions for Theorem \ref{thm_kernel_condition} are fulfilled, and we next aim to determine the projection kernels $(\Vone,\Vtwo)$ for $\Proj^{\theta,\lambda}=(\Radon_\theta,\Fanbeam_\lambda)$. In this case, three of the four terms on the left-hand side of the kernel condition  \eqref{equ_theorem_kernel_conditions} are equal to one. The other term, $\left|\det\left( \frac{\dd { \gamma_{2}^{-1}}   }{\dd x} (x) \right)\right|$, is equal to $\frac{1}{\tau_2(x)}$ (see \eqref{equ_fanbeam_determinante_general}). Therefore,
\begin{equation} \label{equ_mixed_ansatz_dcc}
\tinverse_2(x) \overset{\eqref{equ_theorem_kernel_conditions}}{=} \frac{\Vtwo(\rayinverse_2(x))}{\Vone(\rayinverse_1(x))} \qquad \text{for a.e. }x\in \imgdom.
\end{equation}
Using \eqref{equ_connection_radon_fanbeam2}, the kernel condition \eqref{equ_mixed_ansatz_dcc} possesses the unique (up to multiplication by constant factors) solution
\begin{equation}
\label{mixed_kernel_result}
\Vone(\rayvariable_1) = \frac{1}{\rayvariable_1-\lambda\cdot \vartheta_{\theta}} \qquad \text{ and} 
\qquad 
\Vtwo(\rayvariable_2) = \frac{1}{\vartheta_{\theta}\cdot \vartheta_{\rayvariable_2}}.
\end{equation}
Note that the denominators in \eqref{mixed_kernel_result} do not vanish due to \eqref{equ_mixed_projectoin_Omega} and \eqref{equ_mixed_projectoin_Omega_2}.

This result corresponds to the so-called `parallel-fanbeam Hilbert condition', a special case of the conditions discussed in \cite[Section 4]{Hamaker_divergent_beam_1980} (with $\vartheta_\theta \cdot\vartheta_{r_2}=\cos(\theta-r_2)$); see also \cite[Equation (14)]{5484190}, and \cite{Frederic_Noo_2002,new_data_consistency_fanbeam}.

\begin{remark}\label{remark_mixed_hilbert_transform}
The parallel-fanbeam Hilbert condition holds even if Assumption \ref{Assumption_curve_independent} is violated (i.e., a fanbeam line parallel to the parallel projections passes through $\imgdom$), and can be more useful; see the application to `less than short scans' in \cite{Frederic_Noo_2002}. In our notation, one could write that condition as 
\begin{equation}\label{equ_Hilbert_condition_Fanbeam}
\lim_{\epsilon\to 0}\int_{\raygroundset_1\setminus \left(\lambda\cdot \vartheta_\theta+ [-\epsilon,\epsilon]\right)} \frac{g_1(\rayvariable_1)}{r_1-\lambda\cdot \vartheta_\theta} \dd {\rayvariable_1}
=
\lim_{\varepsilon\to 0}\int_{\raygroundset_2\setminus{[\theta-\varepsilon,\theta+\varepsilon]}} \frac{g_2(\rayvariable_2)}{\vartheta_\theta \cdot \vartheta_{\rayvariable_2}} \dd {\rayvariable_2}.
\end{equation}
Note that (at first glance) \eqref{equ_Hilbert_condition_Fanbeam} looks exactly like the \conditionname{} induced by the kernels found in \eqref{mixed_kernel_result}, but since \eqref{equ_mixed_projectoin_Omega} and \eqref{equ_mixed_projectoin_Omega_2} do no longer hold (and thus, $\Vone$ and $\Vtwo$ are no longer in $L^1_\text{loc}$), we need to introduce symmetric limits (Cauchy Principle Values) in order to ensure the existence of these integrals.

Therefore, \eqref{equ_Hilbert_condition_Fanbeam} is not a \conditionname{} as we understand them in this paper, but induced by the Hilbert transform, a non-regular distribution, making the condition potentially more sensitive to noise unless suitably regularized. 

\end{remark}

\section{Pairwise Range Conditions for the Exponential Fanbeam Transform}
\label{section_no_condition_for_exponential_fanbeam}

The presented theory is applicable to various concrete projection operators and can be used to confirm some known results, as we saw in Section \ref{section_recover_known_conditions}. This raises the question of whether kernel conditions yield \conditionname s for the projection pair operators whose range is yet to be investigated.
To that end, we discuss the exponential fanbeam transform next, finding that it does \textbf{not} possess a \conditionname{}.

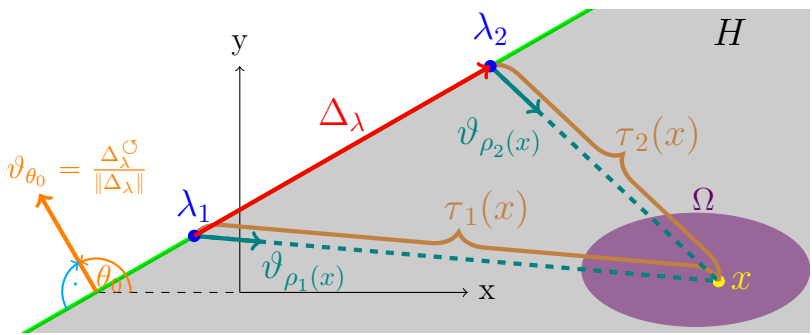
\begin{figure}
\center
\newcommand{\mylinewidth}{ultra thick}
\newcommand{\myfontsize}{\large}
\begin{tikzpicture}[scale=1.5]
\pgfmathtruncatemacro\myN{10}
\pgfmathtruncatemacro\myNsmall{\myN-1}
\pgfmathsetmacro{\sx}{-0.4};
\pgfmathsetmacro{\sy}{0.5};
\pgfmathsetmacro{\myangle}{30};

\pgfmathsetmacro{\Deltanorm}{3}
\pgfmathsetmacro{\Deltax}{\Deltanorm*cos(\myangle)};
\pgfmathsetmacro{\Deltay}{\Deltanorm* sin(\myangle)};

\pgfmathsetmacro{\sxx}{\sx+\Deltax};
\pgfmathsetmacro{\syy}{\sy+\Deltay};

\pgfmathsetmacro{\x}{4.2};
\pgfmathsetmacro{\y}{0.1};
\pgfmathsetmacro{\circlewidth}{0.6};

\draw[->] (0,0) -- (0,2) node [above]{y};
\draw[->] (0,0) -- (2,0) node [right]{x};

\draw[dashed] (-1.25,0) --(0,0);

\begin{scope}
\clip(\sx-2,-0.35) rectangle (5,2.5);

\draw[fill,violet!50] (4,0.2) ellipse (1cm and 0.5cm) node [above ,yshift =0.65cm,xshift=0.1cm,violet]{$\imgdom$};

\draw[green,->,ultra thick](\sx-3*\Deltax,\sy-3*\Deltay) -- (\sxx+3*\Deltax,\syy+3*\Deltay); 
 
\draw [black, opacity=0.2,fill, rotate around={\myangle:(\sx,\sy)}] (\sx-4,\sy) rectangle (\sxx+4,\syy-6) ;
\node[] at (4.3,2.3) {\myfontsize $H$};
\end{scope}

\pgfmathsetmacro{\asdffactor}{0.33};
\pgfmathsetmacro{\asdfradius}{0.3};

\pgfmathsetmacro{\asdfx}{\sx-\Deltax*\asdffactor};
\pgfmathsetmacro{\asdfy}{\sy-\Deltay*\asdffactor};
\draw[orange,ultra thick,rotate around={\myangle+90:(\asdfx,\asdfy)},->]  (\asdfx,\asdfy) -- (1+\asdfx,\asdfy) node [yshift=0.35cm,xshift=0.5cm]{$\vartheta_{\theta_0}=\frac{\Delta^{\rotated}_\lambda}{\|\Delta_\lambda\|}$};
\draw[->,orange,thick,] (\asdfx+\asdfradius,0) arc (0:90+\myangle:\asdfradius) node[xshift=.4cm,yshift=-0.2cm]{$\theta_0$};
\draw[->,cyan,thick,] (\asdfx-\asdfradius*\Deltax/\Deltanorm,\asdfy-\asdfradius*\Deltay/\Deltanorm) arc (180+\myangle:90+\myangle:\asdfradius) node[xshift=-0.05cm,yshift=-0.3cm]{$\cdot$};



\foreach \rayvariable in {1,...,\myNsmall}
{
\pgfmathsetmacro{\currentangle}{180+\myangle+180*\rayvariable/\myN}
}
\draw[fill,yellow] (\x,\y) circle (0.05) node [xshift=0.3cm]{\myfontsize $x$};

\draw[teal,\mylinewidth,dashed] (\sx,\sy) -- (\x,\y);
\pgfmathsetmacro{\direcx}{\x-\sx};
\pgfmathsetmacro{\direcy}{\y-\sy};
\pgfmathsetmacro{\mynorm}{sqrt(\direcx*\direcx+\direcy*\direcy)};
\pgfmathsetmacro{\direcx}{\direcx/\mynorm*\circlewidth};
\pgfmathsetmacro{\direcy}{\direcy/\mynorm*\circlewidth};
\draw[teal, ->,\mylinewidth] (\sx,\sy) -- (\sx+\direcx,\sy+\direcy) node[midway,below, xshift=1cm] {\myfontsize $\vartheta_{\rayinverse_1(x)}$};

\draw [brown,\mylinewidth,decorate,decoration={brace,amplitude=10pt}] (\sx,\sy) -- (\x,\y) node [midway,xshift=0.4cm,yshift=0.65cm] {\myfontsize $\tinverse_1(x)$};

\draw[teal,\mylinewidth,dashed] (\sxx,\syy) -- (\x,\y) ;

\pgfmathsetmacro{\direcx}{\x-\sxx};
\pgfmathsetmacro{\direcy}{\y-\syy};
\pgfmathsetmacro{\mynorm}{sqrt(\direcx*\direcx+\direcy*\direcy)};
\pgfmathsetmacro{\direcx}{\direcx/\mynorm*\circlewidth};
\pgfmathsetmacro{\direcy}{\direcy/\mynorm*\circlewidth};
\draw[teal, ->,\mylinewidth] (\sxx,\syy) -- (\sxx+\direcx,\syy+\direcy) node[midway,below, xshift=-0.2cm,yshift=-0.2cm] {\myfontsize $\vartheta_{\rayinverse_2(x)}$};

\draw [brown,\mylinewidth,decorate,decoration={brace,amplitude=10pt}] (\sxx,\syy) -- (\x,\y) node [midway,xshift=0.7cm,yshift=0.5cm] {\myfontsize $\tinverse_2(x)$};
\draw [blue,fill] (\sx,\sy) circle (0.05cm) node[xshift=-0.cm,yshift=0.4cm] {\myfontsize $\lambda_1$};
\draw [blue,fill] (\sxx,\syy) circle (0.05cm) node[yshift=0.5cm] {\myfontsize $\lambda_2$};
\draw[red,->,ultra thick](\sx,\sy) -- (\sxx,\syy) node[midway,above,yshift=0.1cm ] {\myfontsize $\Delta_\lambda $};

\end{tikzpicture}
\caption{Illustration of the fanbeam geometry with two source positions $\lambda_1$ and $\lambda_2$ (blue). The right (gray) half-plane $H$. The vector $\Delta_\lambda $ between $\lambda_1$ and $\lambda_2$ in red (prolonging that ray in green). 
The teal dashed lines connecting $\lambda_1$ and $\lambda_2$ to $x\in \imgdom$ represent multiples of $\vartheta_{\rayinverse_{\projvariable}(x)}$, and their distance (in brown) represents $\tinverse_{\projvariable}(x)$.}
\label{fig_fanbeam_geometry_illustration}
\end{figure}

\begin{definition}
Given $\lambda \in \RR^2$, $f \in \mathcal{C}^\infty_c(\imgdom)$, ${\attenuation}\in \RR\setminus\{0\}$ and  $\rayvariable \in [\theta_0,\theta_0+2\pi)$ (for some given fixed $\theta_0$), we define the exponential fanbeam projection according to
\begin{align}
[\ExpoFanbeam_\lambda f](\rayvariable)&:=\int_{\RR^+} f(\lambda +\tvariable \vartheta_{\rayvariable}) e^{{\attenuation} \tvariable} \dd \tvariable.
\end{align}
\end{definition}

For fixed $\lambda\in \RR^2$, $\ExpoFanbeam_\lambda $ is again a single projection operator, with $\gamma_\lambda$ as in Section \ref{section_known_conditions_summary} and $\weightfkt(\rayvariable,\tvariable)=e^{\attenuation \tvariable}$.

In order to ensure Assumption \ref{Assumption_curve_independent}, the following holds throughout this section.
\begin{assumption}\label{Assumption_Fanbeam_geometry}
We consider $\lambda_1,\lambda_2\in \RR^2$ with $\lambda_1\neq \lambda_2$ and we define $\Delta_\lambda: = \lambda_2-\lambda_1$. We assume $\imgdom\subset \RR^2\setminus\{\lambda_1,\lambda_2\}$ open, connected and bounded  such that its closure $\overline \imgdom$ is contained in $H:=\{x \in \RR^2 \ \big |\ (x-\lambda_1)^{\rotated} \cdot \Delta_\lambda  > 0\}$ (the right half-plane of the line induced by $\Delta_\lambda$ through the points $\lambda_1$ and $\lambda_2$; see Figure \ref{fig_fanbeam_geometry_illustration}).  
\end{assumption}

With this assumption, we consider two fanbeam geometries starting in $\lambda_1$ and $\lambda_2$, respectively (as described in Section \ref{section_known_conditions_summary}, $\gamma_1=\gamma_{\lambda_1}^\text{fan}$ and $\gamma_2=\gamma_{\lambda_2}^\text{fan}$).
Then, one can choose $\theta_0=\arg_0(\Delta_\lambda^{\rotated})$, in which case $H\subset M_{\theta_0}(\lambda_1) \cap M_{\theta_0}(\lambda_2)$ (so both projection geometries are well-defined) and $\raygroundset_1,\raygroundset_2 \subset (\theta_0+\frac{\pi}{2},\theta_0+\frac{3\pi}{2})$; see Figure \ref{fig_fanbeam_geometry_illustration}.

In the following, let  $\ExpoFanbeam^{\lambda_1,\lambda_2}=(\ExpoFanbeam_{\lambda_1},\ExpoFanbeam_{\lambda_2})$ denote the projection pair operator whose components are the two respective exponential fanbeam projection operators. 	Recall, $\Xgroundset:=\{ (\rayinverse_1(x),\rayinverse_2(x))\in \raygroundset_1\times \raygroundset_2 \ \big|\  x \in \imgdom\}$, which here will correspond to angle pairs of rays starting at $\lambda_1$ and $\lambda_2$ that intersect inside $\imgdom$.

\subsection{No PPRC for $\ExpoFanbeam^{\lambda_1,\lambda_2}$}
\label{Section_exponential_fanbeam_theory}
Theorem \ref{thm_kernel_condition} not only offers a way to identify projection kernels, but if \eqref{equ_theorem_kernel_conditions_2} does not possess solutions $(\Vone,\Vtwo)$, then there are no projection kernels, leading to dense range as described in Theorem \ref{thm_dense_range_general}. We employ this machinery to show that $\ExpoFanbeam^{\lambda_1,\lambda_2}$ does not possess projection kernels (Theorem \ref{thm_no_LPPRC_for_expo_fanbeam}) and thus has dense range (Theorem \ref{thm_exp_fanbeam_dense_range}). This appears to be the first tomographic projection operator for which such a density result is stated.

\begin{theorem} \label{thm_no_LPPRC_for_expo_fanbeam}
Let $\lambda_1,\lambda_2\in \RR^2$, $\attenuation \neq 0$ and let  Assumption \ref{Assumption_Fanbeam_geometry} hold. Then, there is no pair of projection kernels $(\Vone,\Vtwo)$ for $\ExpoFanbeam^{\lambda_1,\lambda_2}$.
\end{theorem}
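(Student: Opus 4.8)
The plan is to apply Theorem~\ref{thm_kernel_condition}: if projection kernels $(\Vone,\Vtwo)$ existed, they would satisfy the kernel condition \eqref{equ_theorem_kernel_conditions_2}, so it suffices to show that the left-hand side of \eqref{equ_theorem_kernel_conditions_2} for $\ExpoFanbeam^{\lambda_1,\lambda_2}$ cannot be written in the separated form $\Vtwo(\rayvariable_2)/\Vone(\rayvariable_1)$. First I would compute that left-hand side explicitly. Writing $d_\projvariable:=\tinverse_\projvariable\big(X(\rayvariable_1,\rayvariable_2)\big)=\|X(\rayvariable_1,\rayvariable_2)-\lambda_\projvariable\|$ and using $\altrhoi(x)=e^{\attenuation\tinverse_\projvariable(x)}$ together with the fanbeam Jacobian $\big|\det\big(\tfrac{\dd\gamma_\projvariable^{-1}}{\dd x}\big)\big|=1/\|x-\lambda_\projvariable\|$ from \eqref{equ_fanbeam_determinante_general}, the left-hand side of \eqref{equ_theorem_kernel_conditions_2} evaluates to
\[
L(\rayvariable_1,\rayvariable_2):=\frac{d_2}{d_1}\,e^{\attenuation(d_1-d_2)},
\]
which is smooth and strictly positive on the open set $\Xgroundset$.

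Second, I would parametrise by the two base angles of the triangle with vertices $\lambda_1,\lambda_2,X(\rayvariable_1,\rayvariable_2)$: let $\alpha_1$ be its angle at $\lambda_1$ and $\alpha_2$ its angle at $\lambda_2$. Since $\rayvariable_\projvariable=\rayinverse_\projvariable(X)$ is the polar angle of $X-\lambda_\projvariable$, each $\alpha_\projvariable$ is an affine function of $\rayvariable_\projvariable$ with slope $\pm1$; hence $L$ is separable in $(\rayvariable_1,\rayvariable_2)$ if and only if it is separable in $(\alpha_1,\alpha_2)$, and mixed second derivatives in the two coordinate systems agree up to a nonzero factor. The law of sines gives $d_1=\ell\sin\alpha_2/\sin(\alpha_1+\alpha_2)$ and $d_2=\ell\sin\alpha_1/\sin(\alpha_1+\alpha_2)$ with $\ell:=\|\lambda_2-\lambda_1\|>0$; the common factor $\ell/\sin(\alpha_1+\alpha_2)$ cancels in $d_2/d_1$, leaving
\[
\log L=\big(\log\sin\alpha_1-\log\sin\alpha_2\big)+\attenuation\ell\,\frac{\sin\alpha_2-\sin\alpha_1}{\sin(\alpha_1+\alpha_2)}.
\]
The first bracket is already a sum of a function of $\alpha_1$ and a function of $\alpha_2$, so it has no effect on the mixed second derivative, and the whole question reduces to the last, explicit, two-variable term.

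Third is the key computation: a direct differentiation (conveniently using $\partial_{\alpha_2}\tfrac{\sin\alpha_2-\sin\alpha_1}{\sin(\alpha_1+\alpha_2)}=\tfrac{\sin\alpha_1}{2\sin^{2}((\alpha_1+\alpha_2)/2)}$ and then applying $\partial_{\alpha_1}$) yields
\[
\frac{\partial^2}{\partial\alpha_1\,\partial\alpha_2}\,\frac{\sin\alpha_2-\sin\alpha_1}{\sin(\alpha_1+\alpha_2)}=\frac{\sin\tfrac{\alpha_2-\alpha_1}{2}}{2\sin^{3}\tfrac{\alpha_1+\alpha_2}{2}},
\]
so $\partial^2_{\alpha_1\alpha_2}\log L=\attenuation\ell\,\sin\tfrac{\alpha_2-\alpha_1}{2}\big/\big(2\sin^{3}\tfrac{\alpha_1+\alpha_2}{2}\big)$. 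Because $\attenuation\neq0$, $\ell>0$, and $\alpha_1,\alpha_2,\alpha_1+\alpha_2\in(0,\pi)$ on $\Xgroundset$, this vanishes only on the diagonal $\alpha_1=\alpha_2$, a set with empty interior.

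Finally I would close the argument. Suppose $(\Vone,\Vtwo)$ are projection kernels. By Theorem~\ref{thm_kernel_condition} they are a.e.\ nonzero and share a constant sign, so after possibly replacing both by their negatives (which leaves \eqref{equ_general_zero_order_consistency} intact) \eqref{equ_theorem_kernel_conditions_2} reads $\Vtwo(\rayvariable_2)=\Vone(\rayvariable_1)\,L(\rayvariable_1,\rayvariable_2)$ a.e.\ on $\Xgroundset$ with both sides positive. Fix an open product box $U_1\times U_2\subset\Xgroundset$ (possible since $\Xgroundset$ is open and nonempty by Lemma~\ref{lemma_X_exists}) and, using Fubini, a point $\rayvariable_1^{*}\in U_1$ with $\Vone(\rayvariable_1^{*})\neq0$ for which the relation holds for a.e.\ $\rayvariable_2\in U_2$; then $\Vtwo=\Vone(\rayvariable_1^{*})\,L(\rayvariable_1^{*},\cdot)$ a.e.\ on $U_2$, i.e.\ $\Vtwo$ coincides a.e.\ with a smooth positive function there, and symmetrically $\Vone$ does on $U_1$. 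Consequently $\log\Vtwo(\rayvariable_2)-\log\Vone(\rayvariable_1)=\log L(\rayvariable_1,\rayvariable_2)$ holds throughout $U_1\times U_2$ with smooth left- and right-hand sides, forcing $\partial^2_{\rayvariable_1\rayvariable_2}\log L\equiv0$ on the box — which contradicts the previous step, since a two-dimensional box cannot lie inside the diagonal. Hence no projection kernels exist. I expect the main obstacle to be conceptual rather than computational: recognising the law-of-sines parametrisation that produces the cancellation, reducing the non-separability question to a single explicit function whose mixed derivative can then be computed routinely.
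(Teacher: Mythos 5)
Your proposal is correct, and while the opening moves coincide with the paper's (apply Theorem \ref{thm_kernel_condition}, insert the fanbeam Jacobian and the exponential weight to get $L=\tfrac{d_2}{d_1}e^{\attenuation(d_1-d_2)}$, pass to logarithms, and reduce everything to additive separability of $\log L$), the mechanism by which you rule out separability is genuinely different. The paper never differentiates: it forms a second-order \emph{finite} difference in both variables, obtaining an explicit analytic function $G$ on a four-dimensional set that must vanish almost everywhere, then invokes the identity theorem for real-analytic functions on a connected extension $\widetilde\Xgroundset$ and exhibits a single tuple of angles where $G=(-8-\sqrt2+4\sqrt3+\sqrt6)\attenuation\|\Delta_\lambda\|\neq0$. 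You instead bootstrap regularity of the kernels (correctly, via Fubini: freezing one variable shows each $\Vvariable$ agrees a.e.\ with a smooth positive function on a product box) and then test separability with the mixed second derivative $\partial^2_{\alpha_1\alpha_2}\log L$, which your law-of-sines parametrization evaluates in closed form as $\attenuation\ell\,\sin\tfrac{\alpha_2-\alpha_1}{2}\big/\bigl(2\sin^{3}\tfrac{\alpha_1+\alpha_2}{2}\bigr)$ — I checked the computation and it is right. Your route buys a cleaner and more informative statement (the exact vanishing locus of the obstruction, namely the diagonal $\alpha_1=\alpha_2$, which has empty interior) and avoids the analyticity/identity-theorem machinery and the somewhat opaque numerical evaluation; the paper's finite-difference route buys the absence of any regularity upgrade on $\Vone,\Vtwo$, at the cost of working on the larger set $\widehat\Xgroundset$ and needing the analytic continuation step because the chosen test tuple need not lie in $\widehat\Xgroundset$ itself. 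Both are complete proofs; yours is arguably the more transparent of the two.
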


Before starting the proof, we state some facts concerning the fanbeam geometry obtained by straightforward calculations and using compactness arguments. Visually, they are quite evident in Figure \ref{fig_fanbeam_geometry_illustration}.
There is a constant $\varepsilon>0$ such that
\begin{equation}\label{equ_fanbeam_angle_product_negative}
\vartheta_{\rayinverse_1(x)}^{\rotated}\cdot \vartheta_{\rayinverse_2(x)} {<}-\varepsilon  \quad \text{for all }x\in \imgdom \quad \text{and} \quad
\vartheta_{\rayvariable_1}^{\rotated}\cdot \vartheta_{\rayvariable_2} {<}-\varepsilon  \quad \text{for all }(\rayvariable_1,\rayvariable_2)\in \Xgroundset
\end{equation}
and
\begin{equation}\label{equ_fanbeam_angle_product_negative_2}
\vartheta_{\rayinverse_{\projvariable}(x)}^{\rotated} \cdot \Delta_\lambda > \varepsilon \quad \text{ for all }x\in \imgdom \quad \text{and} \quad  \vartheta_{\rayvariable_{\projvariable}}^{\rotated} \cdot \Delta_\lambda > \varepsilon \quad \text{for }\rayvariable_{\projvariable}\in \raygroundset_{\projvariable}.
\end{equation}
Note that $x\in H$ (rather than $x\in \imgdom$), if and only if both \eqref{equ_fanbeam_angle_product_negative} and \eqref{equ_fanbeam_angle_product_negative_2} hold with $\varepsilon=0$.
Moreover, reformulating $x=\lambda_1+\tinverse_1(x) \vartheta_{\rayinverse_1}(x) = \lambda_2+\tinverse_2(x) \vartheta_{\rayinverse_2(x)}$, one can calculate
\begin{equation}
\label{equ_Fanbeam_determining_t}
\tinverse_1(x)=
\frac{-\vartheta^{\rotated}_{\rayinverse_2(x)} \cdot \Delta_\lambda }{\vartheta^{\rotated}_{\rayinverse_1(x)} \cdot \vartheta_{\rayinverse_2(x)}} \qquad \text{and} \qquad 
\tinverse_2(x)= \frac{-\vartheta^{\rotated}_{\rayinverse_1(x)} \cdot\Delta_\lambda }{\vartheta^{\rotated}_{\rayinverse_1(x)} \cdot \vartheta_{\rayinverse_2(x)}}
\end{equation}
as well as
\begin{equation}\label{equ_fanbeam_X}
X(\rayvariable_1,\rayvariable_2) = \lambda_1 - \frac{ \vartheta^{\rotated}_{\rayvariable_2}\cdot \Delta_\lambda }{\vartheta^{\rotated}_{\rayvariable_1}\cdot \vartheta_{\rayvariable_2}} \vartheta_{\rayvariable_1} \qquad \text{for }(\rayvariable_1,\rayvariable_2)\in \Xgroundset,
\end{equation}
which is a $\mathcal{C}^1$-diffeomorphism on $\Xgroundset$ with a bounded determinant. Note that $X$ has a diffeomorphic extension $\overline X$ on a neighborhood of $\Xgroundset$ (as a subset of $\RR^2$), and thus $\Xgroundset=\overline X^{-1}(\imgdom)$ is open.
Therefore,   Assumption \ref{Assumption_curve_independent} is satisfied (see Lemma \ref{lemma_X_exists}), and the theory of Section \ref{section_Abstract} is applicable.

\begin{proof}[\textbf{Proof of Theorem \ref{thm_no_LPPRC_for_expo_fanbeam}}]
We assume the existence of a pair of projection kernels $\Vone$ and $\Vtwo$ that are positive on $\raygroundset_1$ and $\raygroundset_2$, respectively. (Recall, projection kernels have the same constant sign as described in Theorem \ref{thm_kernel_condition}.) Using the kernel condition \eqref{equ_theorem_kernel_conditions} and equation \eqref{equ_fanbeam_determinante_general}, $(\Vone,\Vtwo)$ satisfy
 \begin{equation} \label{equ_expo_fanbeam_reduced}
e^{{\attenuation} (\tinverse_1(x)-\tinverse_2(x))} \frac{\tinverse_2(x)}{\tinverse_1(x)} 
 =
\frac{\Vtwo\left(\rayinverse_2(x)\right)}{ \Vone\left(\rayinverse_1(x)\right)}
\qquad \text{ for a.e. }x\in \imgdom.
 \end{equation}
We will show that \eqref{equ_expo_fanbeam_reduced} cannot hold, contradicting the existence of $(\Vone,\Vtwo)$.

We insert \eqref{equ_Fanbeam_determining_t} into \eqref{equ_expo_fanbeam_reduced}, which reduces to
\begin{equation}\label{equ_proof_expo_fanbeam_reduced_plugged_in}
e^{\frac{{\attenuation}}{\vartheta^{\rotated}_{\rayvariable_1} \cdot \vartheta_{\rayvariable_2}}\left( -\vartheta^{\rotated} _{\rayvariable_2} +\vartheta_{\rayvariable_1}^{\rotated} \right)\cdot \Delta_\lambda }\ \frac{\vartheta_{\rayvariable_1}^{\rotated} \cdot \Delta_\lambda }{\vartheta_{\rayvariable_2}^{\rotated} \cdot \Delta_\lambda } =\frac{\Vtwo\left(\rayvariable_2\right)}{ \Vone\left(\rayvariable_1\right)}  \hfill \qquad \text{ for a.e. }(\rayvariable_1,\rayvariable_2) \in \Xgroundset.
\end{equation}

We set $\widetilde \Vone(\rayvariable_1)= (\vartheta^{\rotated}_{\rayvariable_1} \cdot \Delta_\lambda) \Vone(\rayvariable_1)$ and $\widetilde \Vtwo(\rayvariable_2)= (\vartheta^{\rotated}_{\rayvariable_2} \cdot \Delta_\lambda) \Vtwo(\rayvariable_2)$ (note that $\vartheta_{\rayvariable_i}^{\rotated} \cdot \Delta_\lambda>\varepsilon$; see \eqref{equ_fanbeam_angle_product_negative_2}) and obtain
\begin{equation}
e^{\frac{{\attenuation}}{\vartheta_{\rayvariable_1}^{\rotated} \cdot \vartheta_{\rayvariable_2}}\left( {-\vartheta_{\rayvariable_2}^{\rotated}} +{\vartheta_{\rayvariable_1}^{\rotated} }\right)\cdot \Delta_\lambda } =\frac{\widetilde\Vtwo\left(\rayvariable_2\right)}{ \widetilde\Vone\left(\rayvariable_1\right)}  \hfill \qquad \text{ for a.e. }(\rayvariable_1,\rayvariable_2) \in \Xgroundset.
\end{equation}
Applying the logarithm, this reduces to 
\begin{align} \label{equ_proof_non_separable1}
 \frac{{\attenuation}}{\vartheta^{\rotated}_{\rayvariable_1} \cdot \vartheta_{\rayvariable_2}} {(\vartheta^{\rotated}_{\rayvariable_1}  -\vartheta^{\rotated}_{\rayvariable_2}) \cdot \Delta_\lambda } 
&= g_{2}(\rayvariable_2)-g_{1}(\rayvariable_1)
\qquad \text{ for a.e. }(\rayvariable_1,\rayvariable_2) \in \Xgroundset
\end{align}
for the real-valued functions ${g_{1}:=\log(\widetilde \Vone)}$ and ${g_{2}:=\log(\widetilde \Vtwo)}$.
(Note that according to Theorem \ref{thm_kernel_condition}, the signs of the projection kernels are constant, so we can assume that $\widetilde \Vtwo$ and $\widetilde \Vone$ are positive.)

Subtraction of the identity \eqref{equ_proof_non_separable1} with $\widetilde \rayvariable_1 \in \raygroundset_1$ from the same identity with $\rayvariable_1 \in \raygroundset_1$ (i.e., two different arguments for the first variable) such that $(\rayvariable_1,\rayvariable_2),(\widetilde \rayvariable_1,\rayvariable_2)\in \Xgroundset$ yields
\begin{equation}
 \label{equ_proof_non_separable1_1}
F(\rayvariable_1,\tilde \rayvariable_1,\rayvariable_2):={\attenuation}\left[\frac{(\vartheta^{\rotated}_{\rayvariable_1}  -\vartheta^{\rotated}_{\rayvariable_2}) }{\vartheta^{\rotated}_{\rayvariable_1} \cdot \vartheta_{\rayvariable_2}} -\frac{(\vartheta^{\rotated}_{\widetilde \rayvariable_1}  -\vartheta^{\rotated}_{\rayvariable_2})}{ \vartheta^{\rotated}_{\widetilde\rayvariable_1} \cdot \vartheta_{\rayvariable_2}} \right] \cdot \Delta_\lambda  = g_{1}(\widetilde \rayvariable_1)- g_{1}(\rayvariable_1)
\end{equation}
for almost every $\rayvariable_1,\widetilde \rayvariable_1\in \raygroundset_1$ and $\rayvariable_2\in \raygroundset_2$ such that $(\rayvariable_1,\rayvariable_2),(\widetilde \rayvariable_1,\rayvariable_2)\in \Xgroundset$.
Note that the right-hand side of \eqref{equ_proof_non_separable1_1} does not depend on $\rayvariable_2$ (almost everywhere); therefore, neither should the left-hand side.  We set,
\begin{multline} \label{equ_proof_non_separable3}
G(\rayvariable_1,\widetilde \rayvariable_1,\rayvariable_2,\widetilde \rayvariable_2):=F(\rayvariable_1,\widetilde \rayvariable_1,\rayvariable_2)-F(\rayvariable_1,\widetilde \rayvariable_1,\widetilde \rayvariable_2)  
\\
={\attenuation}\left[\frac{(\vartheta^{\rotated}_{\rayvariable_1}  -\vartheta^{\rotated}_{\rayvariable_2}) }{\vartheta^{\rotated}_{\rayvariable_1} \cdot \vartheta_{\rayvariable_2}} -\frac{(\vartheta^{\rotated}_{\widetilde \rayvariable_1}  -\vartheta^{\rotated}_{\rayvariable_2})}{ \vartheta^{\rotated}_{\widetilde\rayvariable_1} \cdot \vartheta_{\rayvariable_2}} -\frac{(\vartheta^{\rotated}_{\rayvariable_1}  -\vartheta^{\rotated}_{\widetilde\rayvariable_2}) }{\vartheta^{\rotated}_{\rayvariable_1} \cdot \vartheta_{\widetilde\rayvariable_2}} +\frac{(\vartheta^{\rotated}_{\widetilde \rayvariable_1}  -\vartheta^{\rotated}_{\widetilde\rayvariable_2})}{ \vartheta^{\rotated}_{\widetilde\rayvariable_1} \cdot \vartheta_{\widetilde\rayvariable_2}} \right] \cdot \Delta_\lambda 
\end{multline}
for  $(\rayvariable_1,\widetilde \rayvariable_1,\rayvariable_2,\widetilde \rayvariable_2)\in \widehat \Xgroundset$ according to
\begin{equation} \label{equ_proof_non_separable4}
\widehat \Xgroundset:=\left\{(\rayvariable_1,\widetilde \rayvariable_1,\rayvariable_2,\widetilde \rayvariable_2) \in \raygroundset_1^2\times \raygroundset_2^2 \ \big |\ \{\rayvariable_{1},\widetilde \rayvariable_{1}\}\times \{\rayvariable_{2},\widetilde \rayvariable_{2}\} \subset \Xgroundset \right\}.
\end{equation} 
The set $\widehat \Xgroundset$ consists of the tuples such that all four combinations of angles correspond to intersection points in $\imgdom$.

Note that $\widehat \Xgroundset$ is again an open set as we describe next.
Let $\hat \rayvariable=(\rayvariable_1,\widetilde \rayvariable_1,\rayvariable_2,\widetilde \rayvariable_2)\in \widehat \Xgroundset$ and $\widehat s=(s_1,\widetilde s_1,s_2,\widetilde s_2)\in \raygroundset_1^2\times \raygroundset_2^2$. Since $\Xgroundset$ is open and $(\rayvariable_1,\rayvariable_2)\in \Xgroundset$, if $|\rayvariable_1 - s_1|$ and $|\rayvariable_2 - s_2|$ are sufficiently small, we have  $(s_1,s_2) \in \Xgroundset$. Analogous considerations hold for the three other combinations; thus $\widehat s\in \widehat \Xgroundset$ if $|\widehat \rayvariable-\widehat s|$ is sufficiently small (for each entry), implying $\widehat \Xgroundset$ is open.

In particular, \eqref{equ_proof_non_separable1_1} implies
\begin{equation} \label{equ_proof_non_separable5}
G(\rayvariable_1,\widetilde \rayvariable_1,\rayvariable_2,\widetilde \rayvariable_2)=0 \qquad\text{for a.e. }(\rayvariable_1,\widetilde \rayvariable_1,\rayvariable_2,\widetilde \rayvariable_2)\in \widehat \Xgroundset.
\end{equation}

The function $G$ (as in the second line of \eqref{equ_proof_non_separable3}) is analytic on $\widehat \Xgroundset$ since $\rayvariable \mapsto \vartheta_{\rayvariable}$ and $\rayvariable \mapsto \vartheta^{\rotated}_{\rayvariable}$ consist of cosine and sine functions that are analytic; all other operations are additions, subtractions, multiplications, and divisions by non-zero (see \eqref{equ_fanbeam_angle_product_negative}), hence, the composition of these functions is also analytic; see \cite{Freitag2011} and \cite{gunning2022analytic}.

Note that $x\in H$ is equivalent to $\rayinverse_1(x)>\rayinverse_2(x)$ (since they are in $(\theta_0+\frac{\pi}{2},\theta_0+\frac{3\pi}{2})$); see Figure \ref{fig_fanbeam_geometry_illustration}. Thus, we set 
\begin{align}
\widetilde \Xgroundset := \Big\{(\rayvariable_1,\widetilde \rayvariable_1,\rayvariable_2,\widetilde \rayvariable_2) \in \left(\theta_0+\frac{\pi}{2},\theta_0+\frac{3\pi}{2}\right)^4 \ \Big |\ & \nu_1>\nu_2  \text{ for all}
\\
& (\nu_1,\nu_2) \in \{\rayvariable_1,\widetilde \rayvariable_1\} \times \{\rayvariable_2,\widetilde \rayvariable_2\} \Big\}. \notag
\end{align}
In particular, $\widetilde \Xgroundset$ is open and connected and $\widehat \Xgroundset \subset \widetilde \Xgroundset$.

Since the denominators in \eqref{equ_proof_non_separable3} are strictly negative on $\widetilde \Xgroundset$, we can extend $G$ (using the same formulation as in \eqref{equ_proof_non_separable3}) to the open and connected set $\widetilde \Xgroundset$. This extension on $\widetilde \Xgroundset$ remains an analytic function (for the same reasons it was analytic on $\widehat \Xgroundset$). 

An analytic function on the connected set $\widetilde \Xgroundset$, which is zero on an  open set $\widehat\Xgroundset\subset \widetilde \Xgroundset$, is zero everywhere on $\widetilde \Xgroundset$. 
If \eqref{equ_proof_non_separable5} were true, then $G=0$ on $\widetilde \Xgroundset$.
Thus, it is sufficient to find a single tuple $(\rayvariable_1^*,\widetilde \rayvariable_1^*,\rayvariable_2^*,\widetilde \rayvariable_2^*) \in \widetilde \Xgroundset$ with $G(\rayvariable_1^*,\widetilde \rayvariable_1^*,\rayvariable_2^*,\widetilde \rayvariable_2^*)\neq 0$, to contradict \eqref{equ_proof_non_separable5} and the existence of $\Vone$ and $\Vtwo$.
We set
\begin{equation}\label{equ_no_fanbeam_counterexample_r}
\rayvariable_1^*=\theta_0+ \pi+\frac{\pi}{4},
\qquad 
\widetilde \rayvariable_1^*= \theta_0+ \pi+\frac{\pi}{6},
\qquad 
\rayvariable_2^*=\theta_0+ \pi,
\qquad
\widetilde\rayvariable_2^* = \theta_0+ \pi-\frac{\pi}{6},
\end{equation}
which satisfy $(\rayvariable_1,\widetilde \rayvariable_1,\rayvariable_2,\widetilde \rayvariable_2)\in \widetilde \Xgroundset$, and recall $\Delta_\lambda= - \|\Delta_\lambda\|\vartheta_{\theta_0}^{\rotated}$.

Note that the inner product is rotation invariant, i.e., $ a \cdot b= (A a)\cdot (A b)$ for a rotation matrix $A$. Since $G$ in \eqref{equ_proof_non_separable3} is a function of the inner products $a\cdot b$ and $a^{\rotated} \cdot b$ with $a,b\in \{\vartheta_{\rayvariable_1},\vartheta_{\widetilde \rayvariable_1},\vartheta_{\rayvariable_2},\vartheta_{\widetilde \rayvariable_2},\Delta_\lambda \}$, rotating these vectors by $-\theta_0$, we can assume without loss of generality $\theta_0=0$.
Straightforward computation shows 
\begin{equation}
G(\rayvariable_1^*,\widetilde \rayvariable_1^*,\rayvariable_2^*,\widetilde \rayvariable_2^*)=(-8-\sqrt 2+4 \sqrt 3+\sqrt 6) \attenuation \|\Delta_\lambda\| \approx-0.0365 \attenuation \|\Delta_\lambda\|\neq 0,
\end{equation}
where we used that $\attenuation \neq 0$ and $\|\Delta_\lambda\|\neq 0$.
This contradicts that $G=0$ on $\widehat\Xgroundset$ (as in \eqref{equ_proof_non_separable5}); hence, $G$ cannot be an analytic function that vanishes almost everywhere on $\widehat \Xgroundset$. Consequently, the initial assumption of the existence of $\Vone$ and $\Vtwo$ cannot be true.
\end{proof}

According to Theorem \ref{thm_no_LPPRC_for_expo_fanbeam}, there are no \conditionname s for the exponential fanbeam transform, and according to Theorem \ref{thm_dense_range_general}, such operators have dense range and any signal is `approximately consistent'. Thus, the following theorem is a direct corollary of Theorem \ref{thm_no_LPPRC_for_expo_fanbeam}.
\begin{theorem}
\label{thm_exp_fanbeam_dense_range}
Let $\lambda_1,\lambda_2\in \RR^2$, let Assumption \ref{Assumption_Fanbeam_geometry} hold and $\attenuation\neq 0$. Let $g\in L^2(\raygroundset_1)\times L^2(\raygroundset_2)$ and $\delta>0$. There is an $f\in \mathcal{C}^\infty_c(\imgdom)$ such that 
\begin{equation}
\|g-\ExpoFanbeam^{\lambda_1,\lambda_2} f\|_{L^2(\raygroundset_1)\times L^2(\raygroundset_2)}\leq \delta.
\end{equation}
 
\end{theorem}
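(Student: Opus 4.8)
The plan is to recognize this statement as an immediate consequence of the machinery already assembled, specifically the combination of Theorem \ref{thm_no_LPPRC_for_expo_fanbeam} and Theorem \ref{thm_dense_range_general}. First I would recall that under Assumption \ref{Assumption_Fanbeam_geometry} each of $\ExpoFanbeam_{\lambda_1}$ and $\ExpoFanbeam_{\lambda_2}$ has been shown to be a single projection operator in the sense of Definition \ref{def_projection_curves}, with diffeomorphism $\gamma_{\lambda_{\projvariable}}^{\text{fan}}$ restricted appropriately and weight $\weightfkt(\rayvariable,\tvariable)=e^{\attenuation\tvariable}$ (which is bounded away from $0$ and $\infty$ on the bounded domain $\gammagroundset_{\projvariable}$). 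Hence $\ExpoFanbeam^{\lambda_1,\lambda_2}=(\ExpoFanbeam_{\lambda_1},\ExpoFanbeam_{\lambda_2})$ is a genuine projection pair operator mapping $\mathcal{C}^\infty_c(\imgdom)$ into $L^\infty_c(\raygroundset_1)\times L^\infty_c(\raygroundset_2)$.

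Next I would invoke Theorem \ref{thm_no_LPPRC_for_expo_fanbeam}, which under exactly the present hypotheses ($\lambda_1\neq\lambda_2$, $\attenuation\neq 0$, Assumption \ref{Assumption_Fanbeam_geometry}) asserts that there is no pair of projection kernels $(\Vone,\Vtwo)$ for $\ExpoFanbeam^{\lambda_1,\lambda_2}$. This is precisely the hypothesis required by Theorem \ref{thm_dense_range_general}. Applying that theorem with the given $g=(g_1,g_2)\in L^2(\raygroundset_1)\times L^2(\raygroundset_2)$ and $\delta>0$ yields an $f\in\mathcal{C}^\infty_c(\imgdom)$ with $\|g-\ExpoFanbeam^{\lambda_1,\lambda_2}f\|_{L^2(\raygroundset_1)\times L^2(\raygroundset_2)}\leq\delta$, which is the claim; equivalently, $\rg{\ExpoFanbeam^{\lambda_1,\lambda_2}}$ is dense in $L^2(\raygroundset_1)\times L^2(\raygroundset_2)$.

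There is essentially no obstacle here beyond bookkeeping: the only point that needs a moment's care is that Theorem \ref{thm_dense_range_general} is stated for an arbitrary projection pair operator with no projection kernels and does not itself require Assumption \ref{Assumption_curve_independent}, so the chain of implications is clean — Assumption \ref{Assumption_curve_independent} is needed only inside the proof of Theorem \ref{thm_no_LPPRC_for_expo_fanbeam} (where it is verified via the explicit diffeomorphism $X$ in \eqref{equ_fanbeam_X}), not again at this stage. Thus the proof reduces to citing the two theorems in sequence.
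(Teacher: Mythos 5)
Your proposal is correct and matches the paper's own argument exactly: the paper presents Theorem \ref{thm_exp_fanbeam_dense_range} as a direct corollary obtained by combining the nonexistence of projection kernels (Theorem \ref{thm_no_LPPRC_for_expo_fanbeam}) with the density result for kernel-free projection pair operators (Theorem \ref{thm_dense_range_general}). Your additional remark that Assumption \ref{Assumption_curve_independent} is only needed inside the proof of Theorem \ref{thm_no_LPPRC_for_expo_fanbeam} is accurate bookkeeping.
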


\subsection{Numerical approximation of inconceivable data}
\label{Section_expo_fanbeam_numerics}

 \label{example_surjectivity}

The density result described in Theorem \ref{thm_exp_fanbeam_dense_range} might seem surprising as it differs from the situation for projection pair operators with range conditions. Hence, in this section, we illustrate via numerical simulations that the (pairwise) exponential fanbeam transform can attain functions that are unreachable for projection pair operators that admit projection kernels.

\subsubsection*{Inconceivable data}
We first describe what we mean by an unreachable function.
Let $g=(g_1,g_2)$ be two functions (on some open intervals $\raygroundset_1$ and $\raygroundset_2$) with constant $g_1=0$, and $g_2\geq 0$ attains some positive values. If this $g$ were the measurements $\Proj f$ of a function $f$ via a projection pair operator $\Proj$, then $f$ would be invisible from one direction, but only show non-negative values from a different direction.

If $\Proj$ possesses projection kernels $\Vone,\Vtwo$, then said $g$ is not in $\rg{\Proj}$, i.e.,  there is no $f\in \mathcal{C}^\infty_c(\imgdom)$ such that  $g=\Proj f$.
This can be verified using the kernel condition \eqref{equ_dcc_controlable_condition} for $\Proj$. 
Since $g_1=0$, $g_2$ is positive somewhere and non-negative everywhere, and $\Vtwo$ has constant sign (w.l.o.g. is positive), \eqref{equ_dcc_controlable_condition} is not satisfied as
\begin{equation}
\int_{\raygroundset_1} g_1(\rayvariable_1) \Vone(\rayvariable_1) \dd {\rayvariable_1} = 0 \qquad \text{and}  \qquad \int_{\raygroundset_2} g_2(\rayvariable_2) \Vtwo(\rayvariable_2) \dd {\rayvariable_2}>0.
\end{equation} 
 
In particular, given two source positions $\lambda_1,\lambda_2\in \RR^2$ satisfying Assumption \ref{Assumption_Fanbeam_geometry}, the pairwise fanbeam transform $\Fanbeam^{\lambda_1,\lambda_2}:=(\Fanbeam_{\lambda_1},\Fanbeam_{\lambda_2})$ possesses projection kernels $\Vone(\rayvariable_1) = \frac{1}{\vartheta_{\rayvariable_1}^{\rotated} \cdot \Delta_\lambda},\Vtwo(\rayvariable_2)=\frac{1}{\vartheta_{\rayvariable_2}^{\rotated} \cdot \Delta_\lambda}$ (see \cite{doi:10.1080/01630568308816147}, employ Theorem \ref{thm_kernel_condition}, or see \eqref{equ_proof_expo_fanbeam_reduced_plugged_in} with $\attenuation=0$). Consequently, such a $g$ satisfies $g\not \in \rg{\Fanbeam^{\lambda_1,\lambda_2}}$. Because $\Vone$ and $\Vtwo$ are $L^2$ functions (see \eqref{equ_fanbeam_angle_product_negative_2}), the set of $L^2$ functions satisfying \eqref{equ_dcc_controlable_condition} is closed, implying
that there is a positive distance between this $g$ and $\overline {\rg{\Fanbeam^{\lambda_1,\lambda_2}}}$, i.e., there is a $\delta>0$ with $\|g-\Fanbeam^{\lambda_1,\lambda_2} f\|_{L^2(\raygroundset_1)\times L^2(\raygroundset_2)}\geq \delta$ for all $f\in \mathcal{C}^\infty_c(\imgdom)$.

In contrast, for the exponential fanbeam transform $\ExpoFanbeam^{\lambda_1,\lambda_2}$ (with any fixed non-zero $\attenuation$), for each $\delta>0$, there is an $f_{\attenuation}^*\in \mathcal{C}^\infty_c(\imgdom)$ with $\|g-\ExpoFanbeam^{\lambda_1,\lambda_2}f_{\attenuation}^*\|_{L^2(\raygroundset_1)\times L^2(\raygroundset_2)}$ $\leq \delta$  according to Theorem \ref{thm_exp_fanbeam_dense_range}. 
Hence, the described data $g$ are inconceivable for the classical fanbeam transform $\Fanbeam^{\lambda_1,\lambda_2}$, but not for the exponential fanbeam transform $\ExpoFanbeam^{\lambda_1,\lambda_2}$.

\subsubsection*{Description of the numerical experiment}
We use numerical methods to approximate such a function $f_{\attenuation}^*$. 
Concretely, we assume $\lambda_1=(0,80)^T$, $\lambda_2=(-80,0)^T$ and $\imgdom=\{(\bold x, \bold y)\in (-35,35)^2\ \big|\   |\bold y|<\frac 5 {12}( \bold x+80), \ |\bold x|< \frac 5 {12} (80-\bold y)\}$ which satisfy Assumption \ref{Assumption_Fanbeam_geometry}; see Figure \ref{figure_example_surjectivity} a). 
Correspondingly, the relevant angular ranges are  $\raygroundset_1 = (- \frac {\pi} {2}-\alpha_0, - \frac {\pi} {2}+\alpha_0)$ and $\raygroundset_2 = (-\alpha_0,\alpha_0)$ with $\alpha_0 = \arctan\left ( \frac{5}{12}\right)\approx 22.62^\circ$.
We consider, for $\rayvariable \in \raygroundset_2$,
\begin{equation}\label{equ_fanbeam_numerics_definition_g2}
g_2(\rayvariable) = \begin{cases} \frac{1}{4}-9\tan ^2\left(\rayvariable\right) \qquad & \text{if }|\rayvariable|<\arctan(1/6)\approx 9.46^\circ ,
\\
0 & \text{otherwise},
\end{cases}
\end{equation}
and $g_1$ is constantly zero; see Figure \ref{figure_example_surjectivity} b).  We fix $\attenuation=-0.154$, the  attenuation undergone per centimeter of transversed water at 140 keV, the usual operating energy for SPECT.

\begin{figure}

\newcommand{\mylinewidth}{2}
\newcommand{\mywidthimg}{0.35}

\definecolor{forestgreen4416044}{RGB}{44,160,44}

\begin{tabular}{lll}
 \begin{tikzpicture}[scale=0.045]
\pgfmathsetmacro{\RE}{80};
\pgfmathsetmacro{\R}{40};
\pgfmathsetmacro{\DW}{100};
\pgfmathsetmacro{\detectordepth}{10};
\pgfmathsetmacro{\factor}{2};
\newcommand{\myfont}{\tiny}

\pgfmathsetmacro{\imagewidth}{70};
\newcommand{\detectorangle}{0}

\clip[] (-90,-60) rectangle (60,110);

\draw [ thick, decorate,decoration={brace,amplitude=5pt}]  (\imagewidth/2,\imagewidth/2) -- (\imagewidth/2,-\imagewidth/2)  node [midway,xshift=0.3cm, rotate=90] { \myfont $70$cm};
\draw [ thick, decorate,decoration={brace,amplitude=5pt}]   (\imagewidth/2,-\imagewidth/2) -- (-\imagewidth/2,-\imagewidth/2)  node [midway,below,yshift=-0.1cm,] { \myfont $70$cm};

\colorlet{shadecolor}{gray!60}
\colorlet{shadecolor2}{orange}
\colorlet{shadecolor3}{magenta}

\draw[fill,gray,very thick]  (-\imagewidth/2,-\imagewidth/2) rectangle (\imagewidth/2,\imagewidth/2) ;

\pgfmathsetmacro{\steepest}{22.61};

\begin{scope}
\pgfmathsetmacro{\phi}{0};
\pgfmathsetmacro{\x}{-\RE*cos(0)};
\pgfmathsetmacro{\phiperp}{\phi+90};
\pgfmathsetmacro{\y}{-\RE*sin(0)};
\pgfmathsetmacro{\upper}{\DW/2};
\pgfmathsetmacro{\lower}{-\DW/2};
\pgfmathsetmacro{\myangle}{};
\clip[] (-100,80) rectangle (100,-50);
\clip[rotate around={\steepest:(\x,\y)}] (\x,\y) rectangle (\R,-200);

\clip[rotate around={-\steepest:(\x,\y)}] (\x,\y) rectangle (\R,200);

\pgfmathsetmacro{\phi}{-90};
\pgfmathsetmacro{\x}{-\RE*cos(\phi)};
\pgfmathsetmacro{\phiperp}{\phi+90};
\pgfmathsetmacro{\y}{-\RE*sin(\phi)};
\pgfmathsetmacro{\upper}{\DW/2};
\pgfmathsetmacro{\lower}{-\DW/2};

\clip[rotate around={\steepest:(\x,\y)}] (\x,\y) rectangle (-200,-200);

\clip[rotate around={-\steepest:(\x,\y)}] (\x,\y) rectangle (200,-200);

\draw[fill,yellow,very thick]  (-\imagewidth/2,-\imagewidth/2) rectangle (\imagewidth/2,\imagewidth/2) ;
\draw[](0,0) node[right,yshift=-1.2cm ]{$\imgdom$};
\end{scope}

%
%
%

%

\foreach \phi/\myopacity/\mycolor/\myname/\myxshift/\myyshift in {0/1/red/2/0.9/-0.2,-90/1/forestgreen4416044/1/0.15/-0.9}
{
\pgfmathsetmacro{\x}{-\RE*cos(0)};
\pgfmathsetmacro{\phiperp}{\phi+90};
\pgfmathsetmacro{\y}{-\RE*sin(0)};
\pgfmathsetmacro{\upper}{\DW/2};
\pgfmathsetmacro{\lower}{-\DW/2};
\draw[rotate around={\detectorangle:(0,0)},line width=\mylinewidth,rotate around={\phi:(0,0)},opacity=\myopacity,\mycolor]  (\R,\upper)--(\x,\y) node[above ]{$\lambda_\myname$};
\draw[rotate around={\detectorangle:(0,0)},line width=\mylinewidth,rotate around={\phi:(0,0)},opacity=\myopacity,\mycolor] (\x,\y) -- (\R,\lower);
\draw[dashed,rotate around={\detectorangle:(0,0)},line width=\mylinewidth,rotate around={\phi:(0,0)},opacity=\myopacity] (\x,\y) -- (0,0) node[midway,xshift=\myxshift cm,yshift=\myyshift cm, rotate=\phi ] {\tiny 80cm};
}

\draw(-80,80) node[above]{a)};
\draw(-60,60) node[above]{\small $\attenuation=-0.154$};

\newcommand{\myradius}{35}
\pgfmathsetmacro{\myangle}{atan(5/12)};
\foreach \startx/\starty/\middelangle/\mysign/\xshift/\yshift/\myradiusx/\myradiusy in {-80/0/0/1/-0.25/0/\myradius/0 , -80/0/0/-1/-0.25/0/\myradius/0 , 0/80/-90/1/0/-0.2/0/-\myradius , 0/80/-90/-1/0/-0.2/0/-\myradius}
{
\draw[blue] ( \startx+\myradiusx,\starty+\myradiusy ) arc (\middelangle:\mysign*\myangle+\middelangle:\myradius) node[yshift=-\yshift cm,xshift=\xshift cm,midway]{$\alpha_0$};
}
%

\end{tikzpicture} 
&
\begin{overpic}[height=0.44\textwidth]{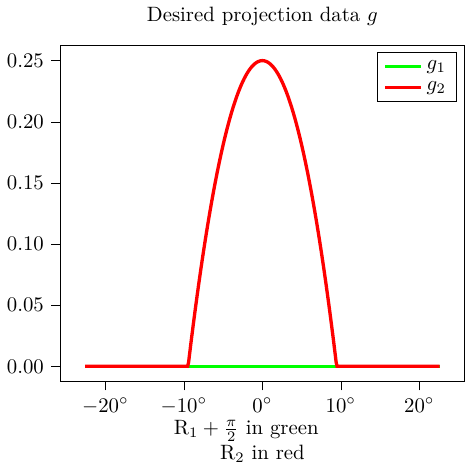}
\put(15,95){\color{black}b)}
\end{overpic}&

\end{tabular}

\caption{Illustration of the setup for Section \ref{example_surjectivity}. In a) the projection geometry is shown, where the relevant image domain $\imgdom$ (yellow) is contained in a 70 cm by 70 cm square, with divergent beam positions $\lambda_1=(0,80)$ and $\lambda_2=(-80,0)$. Figure b) depicts the data functions $g=(g_1,g_2)$ being constantly 0 for $g_1$ and $g_2$ according to \eqref{equ_fanbeam_numerics_definition_g2}. The $\bold x$-axis represents the angles relative to the central lines for $\rayvariable_{\projvariable}\in \raygroundset_{\projvariable}$ and ${\projvariable}\in \{1,2\}$.}
\label{figure_example_surjectivity}
\end{figure}

We aim to find $f^*_{\attenuation}$ satisfying
\begin{equation}\label{equ_expo_fanbeam_example_continuous_equation}
g\essequ\ExpoFanbeam^{\lambda_1,\lambda_2} f_{\attenuation}^*,
\end{equation}
where we cannot write `$=$' as the range is not necessarily closed; with $\essequ$ we mean arbitrarily close.
To that end, we discretize \eqref{equ_expo_fanbeam_example_continuous_equation} by considering the systems of linear equations
\begin{equation}\label{equ_expo_fanbeam_example_discrete_equation}
\widetilde g = \widetilde \ExpoFanbeam^{\lambda_1,\lambda_2}\widetilde f_{\attenuation}
\end{equation}
with $\widetilde g\in \RR^{2\times 400}$ a discretization of $g$ (via pointwise evaluation), a vector (image) $\widetilde f_{\attenuation}\in \RR^{1000\times 1000}$, and an operator (matrix) $\widetilde \ExpoFanbeam^{\lambda_1,\lambda_2}\colon\RR^{1000\times 1000}\to\RR^{2\times 400}$ discretizing the projection pair operator $\ExpoFanbeam^{\lambda_1,\lambda_2}$ (using a pixel-driven approach \cite{doi:10.1137/20M1326635}, which also informs our choice of discretization parameters). 
We employ the `conjugate gradients normal equation' (CGNE) iteration \cite{CG_inversion,Engl96_book_regularization_ip}  to find the minimum norm least-squares solution $\widetilde f_{\attenuation}^*$ to \eqref{equ_expo_fanbeam_example_discrete_equation}.

\subsubsection*{Simulation results}
Figure \ref{Fig_numerics_least_squares} a) shows the least squares solution $\widetilde f_{\attenuation}^*\in \RR^{\myresolution\times \myresolution}$  we obtained after $10000$ CGNE iterations. As can be seen, most activity is located in the center (red and orange `hot' regions), which is used for  $\widetilde \ExpoFanbeam_{\lambda_2} \widetilde f^*_{\attenuation}$ to achieve the non-zero values in $g_2$. Projecting from the left to the right, this area is exactly the one projected onto the positive areas of $g_2$. 
In order to still achieve approximately zero when projecting from $\lambda_1$ (from above), negative values (in the blue 'cold' regions), balance the hot regions, resulting in (attenuation-weighted) line integrals reaching approximately zero. 
Since these colder regions are closer to $\lambda_1$ than the hot ones,  they contribute more (due to attenuation) to the relevant weighted line integrals.
Moreover, the function $\widetilde f^*_{\attenuation}$ has some oscillatory behavior in the upper left parts and attains high values at the boundary; these might well be artifacts from solving the discrete problem \eqref{equ_expo_fanbeam_example_discrete_equation}.

To better illustrate the values of $\widetilde f^*_{\attenuation}$, we visualize them in Figure \ref{Fig_numerics_values_along_lines} along the central vertical and horizontal lines $L_1$ and $L_2$;
 see Figure \ref{Fig_numerics_least_squares} a). As can be seen, along $L_1$ there are some strong negative and positive values when entering $\Omega$, followed by lighter negative values before reaching the main `hot' activity in the center of the image described above. 
Due to the attenuation effect, even though there appear to be more positive than negative values, the weighted integral is roughly zero.
In contrast, along $L_2$ there are only positive values, used to achieve the corresponding positive value in $g_2$.

Equation \eqref{equ_expo_fanbeam_example_discrete_equation}, was solved to an extremely high accuracy by $\widetilde f^*_{\attenuation}$, with relative error
\begin{equation}\label{equ_appendix_relative_error_expo_small_reso}
\frac{\| \widetilde g - \widetilde \ExpoFanbeam^{\lambda_1,\lambda_2}\widetilde f_{\attenuation}^*\|_{2}}{\|\widetilde g\|_{2}}\approx 10^{-16},
\end{equation}
 suggesting that a solution to \eqref{equ_expo_fanbeam_example_discrete_equation}, and not just a least squares approximation, was found. (Note that $\|\cdot\|_2$ denotes to the classical Euclidean norm on the finite-dimensional space $\RR^{2\times 400}$.) In particular, the projections $\widetilde {\ExpoFanbeam}^{\lambda_1,\lambda_2}\widetilde f^*_\omega$ are virtually identical to $g$ (or rather $\widetilde g$); see Figure \ref{Fig_numerics_least_squares} b). 

 From $\widetilde f^*_{\attenuation}$ satisfying \eqref{equ_expo_fanbeam_example_discrete_equation}, one concludes that it also satisfies \eqref{equ_expo_fanbeam_example_continuous_equation} (when interpreted as a piecewise constant function $f^*_{\attenuation}$) as finding such a function was our original goal. And indeed, $f^*_{\attenuation}$ appears to be approximately projected onto $g$; thus Figure \ref{Fig_numerics_least_squares} a) is somewhat representative of the kind of function we were seeking to illustrate the dense range of $\ExpoFanbeam^{\lambda_1,\lambda_2}$.  
 

\begin{figure}
\center
\newcommand{\mywidth}{0.42}
 \begin{overpic}[height=\mywidth\textwidth]{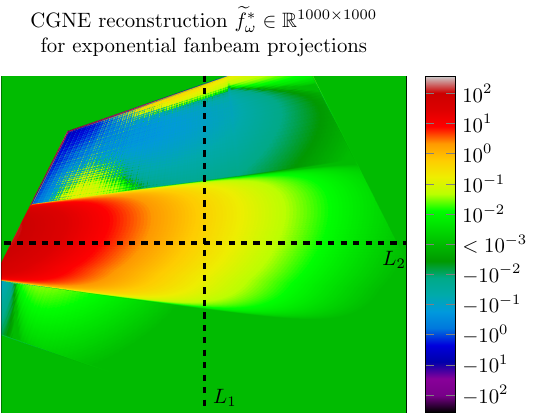}
\put(0,72){\color{black}a)}
\end{overpic}
\begin{overpic}[height=0.42\textwidth]{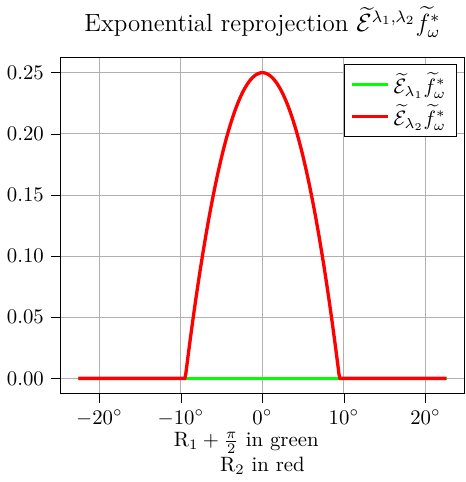}
\put(5,92){\color{black}b)}
\end{overpic}

\caption{Illustration of the least-squares solutions $\widetilde f_{\attenuation}^*\in \RR^{\myresolution\times \myresolution}$ to \eqref{equ_expo_fanbeam_example_discrete_equation} found via 10000 CGNE iterations in a), while b) shows the corresponding projections $\widetilde \ExpoFanbeam^{\lambda_1,\lambda_2} \widetilde f^*_{\attenuation}$. The dashed lines $L_1$ and $L_2$ represent the central horizontal and vertical lines related to the two projections.} 

\label{Fig_numerics_least_squares}
\end{figure}

\begin{figure}

\newcommand{\mywidth}{0.48}

\begin{overpic}[width=\mywidth\textwidth]{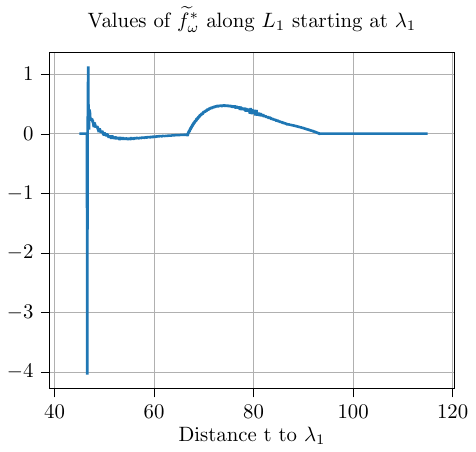}
\put(10,89){\color{black}a)}
\end{overpic}
\hfill
\begin{overpic}[width=\mywidth\textwidth]{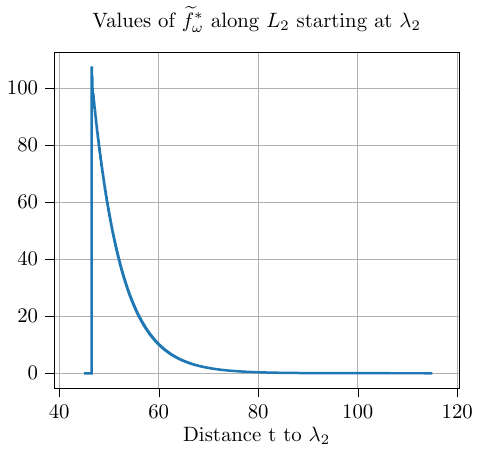}
\put(10,89){\color{black}b)}
\end{overpic}
\caption{Illustration of the values of $\widetilde f^*_{\attenuation}$ along the lines $L_1$ and $L_2$, (see Figure \ref{Fig_numerics_least_squares}), where the parameter $\mathrm t$ represents the distance to the beam divergent point in question.}

\label{Fig_numerics_values_along_lines}

\end{figure}

\section*{Discussion and Conclusion}
\addcontentsline{toc}{section}{Discussion and Conclusion}
This work introduced the concept of projection pair range conditions (Definition \ref{def_LPPRC_kernel}) to find a unified theory for range conditions of tomographic projection pair operators. 
 Investigation of these \conditionname s yielded the kernel conditions (Theorem \ref{thm_kernel_condition}) that need to be satisfied by projection kernels of projection pair range conditions. These offer a systematic approach for identifying \conditionname s and discussing their existence.


The uniqueness result of Theorem \ref{thm_kernel_condition} implies that all \conditionname s for a specific projection pair operator are equivalent. Hence, for projection pair operators for which \conditionname s are already known, there are no further or different \conditionname s.

 It will be of interest whether an analogous methodology for finding range conditions for triples or larger tuples can be developed. Though this should be possible in principle, solving the related equations might be very challenging and might be the subject of future work.
 
The fact that no \conditionname s exist for the exponential fanbeam transform  (Theorem \ref{thm_no_LPPRC_for_expo_fanbeam}) is novel. This seems to be the first projection pair operator for which it has been shown that there is no overlap of information between two projections.
As mentioned in Theorem \ref{thm_exp_fanbeam_dense_range}, this means any projection data can be approximated arbitrarily well via the exponential fanbeam transform of smooth functions.

All of the considerations here are in only two spatial dimensions, but it seems likely that analogous results can be achieved in three dimensions. 

In order to investigate whether projection pair operators that do not admit projection kernels (such as the exponential fanbeam transform) are surjective, future work might investigate whether or not the range of such operators is closed.

\section*{Acknowledgement}
The authors acknowledge primary support from the French `Agence National de la Recherche' via grant ANR-21-CE45-0026 `SPECT-Motion-eDCC’.
Richard Huber was additionally supported by The Villum Foundation (Grant No.25893). {This research was funded in whole or in part by the Austrian Science Fund (FWF) 10.55776/F100800.}

\addcontentsline{toc}{section}{References}
{
\begin{singlespace}
\footnotesize
\bibliographystyle{siamplain}
\bibliography{references}
\end{singlespace}
}

\end{document}